\theoremstyle{plain}
\newtheorem{theorem}{Theorem}
\newtheorem{lemma}{Lemma}
\newtheorem{corollary}{Corollary}
\newtheorem{proposition}{Proposition}
\acrodef{wlog}[WLOG]{without loss of generality}
\acrodef{lsc}[lsc]{lower semi-continuous}
\definecolor{red}{RGB}{163, 31, 52}
\definecolor{gray}{RGB}{194, 192, 191}
\definecolor{blue}{RGB}{59, 89, 152}
\definecolor{green}{RGB}{0, 179, 0}
\newcommand{\norm}[1]{\left\|#1\right\|}
\newcommand{\bigO}[1]{\mathcal{O}\left(#1\right)}
\renewcommand{\tfrac}[2]{{#1}/{#2}}
\newcommand{\set}[2]{\left\{ #1\ : \ #2 \right\}}
\newcommand{\defn}[0]{:=}
\newcommand{\mc}{\mathcal}
\newcommand{\mb}{\mathbb}
\newcommand{\mr}{\mathrm}
\renewcommand{\Re}{\mathbb{R}}
\newcommand{\st}{\mr{s.t.}}
\title{Subgradient Methods for Nonsmooth Convex Functions with Adversarial Errors}
\author[1]{Martijn G\"osgens  \thanks{martijn.gosgens@cwi.nl}}
\author[1]{Bart P.G.\ Van Parys \thanks{bart.van.parys@cwi.nl}}
\affil[1]{CWI Amsterdam}
\begin{document}
\maketitle
\nocite{boyd2004convex}

\begin{abstract}
We consider minimizing nonsmooth convex functions with bounded subgradients.
However, instead of directly observing a subgradient at every step $k\in [0, \dots, N-1]$, we assume that the optimizer receives an \emph{adversarially corrupted} subgradient.
The adversary's power is limited to a finite corruption budget, but allows the adversary to strategically time its perturbations.
We show that the classical averaged subgradient descent method, which is optimal in the noiseless case, has worst-case performance that deteriorates \emph{quadratically} with the corruption budget. 
Using performance optimization programming, (i) we construct and analyze the performance of three novel subgradient descent methods, and (ii) propose a novel lower bound on the worst-case suboptimality gap of any first-order method satisfying a mild cone condition proposed by \cite{fatkhullin2025sgdhandleheavytailednoise}.
The worst-case performance of each of our methods degrades only \emph{linearly} with the corruption budget.
Furthermore, we show that the relative difference between their worst-case suboptimality gap and our lower bound decays as $\mc O(\log(N)/N)$, so that all three proposed subgradient descent methods are \emph{near-optimal}.
Our methods achieve such near-optimal performance without a need for momentum or averaging. This suggests that these techniques are not necessary in this context, which is in line with recent results by \cite{zamani2023exact}.
\end{abstract}

\section{Introduction}

We consider the classical problem of minimizing a nonsmooth convex objective function $f:X\to\Re$ with subgradients bounded in norm by $L$, i.e.,
\(
g\in \partial f(x) \implies \|g\|^2 \leq L^2
\)
for all $x$ in $X$, a linear vector space, and $g$ in its dual $X^\star$.
We assume that the problem is well-posed, i.e., $\min_{x}f(x) = f(x_\star) = f_\star>-\infty$ and $\| x_\star - x_0 \|^2 \leq R^2$ for $x_0\in X$. We will denote the problem class collecting all such functions as $\mc F$.

Subgradient methods are particularly simple iterative algorithms which have been studied following the pioneering work of \citet{shor1962application} with desirable properties for solving this class of optimization problems.
Starting from the initial iterate $x_0$, subgradient methods construct the sequence
\begin{align}
  \label{eq:subgradient-method}
  \begin{split}
    x_{k+1} = &~ x_{k} - h_{k} \tilde g_{k} \hspace{7em} \forall k\in [0, \dots, N-1],\\
    x_{N+1} = &~ \frac{\textstyle\sum_{k=0}^{N} h_k x_k}{\textstyle\sum^N_{k=0} h_k}
  \end{split}
\end{align}
with subgradients $\tilde g_k\in \partial f(x_k)$ for a fixed step size schedule $h = (h_k)_{k\in [0, \dots, N]}$.
Under the aforementioned assumptions, the performance of a generic fixed-step subgradient method satisfies for any $f\in \mc F$ the classical guarantee
\begin{equation}
  \label{eq:classical-upper-bound}
  f_{N+1}-f_\star \leq E(h) \defn \frac{R^2 + L^2 \textstyle\sum_{k=0}^N h^2_k}{2 \textstyle\sum_{k=0}^N h_k},
\end{equation}
where we write $f_k \defn f(x_k)$ for all $k\in [0, \dots, N+1]$; see for instance \citet{boyd2003subgradient, lan2020first}.

Remarkably, the {performance estimate} $E$ from \eqref{eq:classical-upper-bound} is a convex function of the step size schedule.
This simple observation enables designing an {optimized} subgradient method by considering the performance optimization problem
\(
h^\star\in\arg\min_{h\geq 0} E(h).
\)
This performance optimization problem admits the analytical solution
\begin{equation}
  \label{eq:classical-stepsize}
  E(h^\star) = \frac{RL}{\sqrt{N+1}} \quad {\rm{with}} \quad h^\star_k=\frac{R}{L\sqrt{N+1}}
\end{equation}
for all $k\in [0,\dots, N]$, i.e., fixed-step subgradient descent with subsequent iterate averaging.

We remark, however, that from the previous it does not immediately follow that this subgradient method is optimal since the classical performance estimate \eqref{eq:classical-upper-bound} is not particularly sharp.
Indeed, consider a vanishing step size schedule where $h_k=0$ for all $k\in [0, \dots, N-1]$. Then trivially we have $f_{N+1} - f_\star=f_0 -f_\star\leq R L$, whereas the performance estimate is degenerate for $R>0$.
More surprisingly,  \cite{zamani2023exact} have recently shown that a non-constant step size schedule
\begin{equation}
  \label{eq:zamani-stepsize}
  h_k = \frac{R(N-k)}{L(N+1)^{3/2}}
\end{equation}
for all $k\in [0, N-1]$ and $h_N=\infty$ (so that $x_{N+1}=x_N$) in fact enjoys the same performance guarantee $f_{N+1}-f_\star\leq E(h^\star)= \tfrac{RL}{\sqrt{N+1}}$ even though the classical performance estimate for this step size schedule is also degenerate.
Alternatively, performance estimation programming initiated by \citet{drori_performance_2014} allows to exactly characterize the worst-case performance of a generic subgradient as a tractable semidefinite optimization problem.
Although performance estimation programming has witnessed a surge of recent interest \citep{taylor_smooth_2017, das_gupta_branch-and-bound_2024}, finding a subgradient method with best worst-case performance results in a nonconvex performance optimization problem. In fact, verifying that a given subgradient method with step size schedule $h^\star$ enjoys the best worst-case performance is algorithmically hard.
Instead, subgradient methods with equal step sizes are shown to be worst-case optimal indirectly, by showing that no black-box optimization method can guarantee better performance \citep{Drori_Teboulle_2016}.

\subsection{Contributions}

In this paper, we generalize these results to a setting with adversarially corrupted subgradients. That is, the optimizer does not directly observe subgradients $g_k\in\partial f(x_k)$, but instead receives corrupted subgradients
\[
  \tilde g_k = g_k+e_k.
\]
We consider a bounded \emph{corruption budget} $\sum_{k=0}^{N-1}\|e_k\|^2 \leq \gamma^2$.
This means that the adversary must time its perturbations strategically. Such adversaries are of fundamental interest and have received a surge of recent attention in the optimization \citep{chang_gradient_2022}, bandit learning \citep{lykouris2018stochastic}, and adversarial neural networks \citep{wang2021robust} communities.

Clearly, if $\gamma=0$, then the problem studied here reduces to classical nonsmooth optimization admitting algorithms which reduce the suboptimality gap at rate $\mc O(N^{-1/2})$. On the other hand, if $\gamma\ge L\sqrt{N}$, then the adversary can fully corrupt the subgradients by the choice $e_k=-g_k$, so that $x_{N+1}=x_0$ and no progress can be made.
Because of this, we study the interesting intermediate regime $\gamma\in(0, L\sqrt{N})$.

The following result illustrates that the classical subgradient descent method may suffer arbitrarily poor performance in the presence of adversarial noise:
\begin{lemma}\label{lem:example}
    Let $\gamma\in(0,L\sqrt N)$.
    For the classical subgradient method with step sizes given in Equation (\ref{eq:classical-stepsize}), there exists a problem instance where
    \[
    f_{N+1}-f_\star\ge\frac{\gamma^2R}{8L\sqrt{N+1}}
    \]
    which exceeds the trivial bound $RL$ for $\gamma>2\sqrt2(N+1)^{1/4}L$ and grows unbounded for $\gamma\gg N^{1/4}$.
\end{lemma}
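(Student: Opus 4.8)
The plan is to defeat the averaged method with a single one-dimensional instance on which the adversary forces the iterates to drift monotonically away from the optimum. I would take $X=\Re$, the shallow absolute-value function $f(x)=s|x|$ for a slope $s\in(0,L)$ to be fixed, and place $x_0=x_\star=0$, so that $f_\star=0$ and $\|x_0-x_\star\|=0\le R$; the radius $R$ then enters only through the prescribed constant step size $h=h^\star=\tfrac{R}{L\sqrt{N+1}}$. Since $\partial f(x)=\{s\}$ for $x>0$ and $\partial f(0)=[-s,s]$, the bound $\|g_k\|\le s\le L$ is automatic. The decisive design choice is to let the adversary reverse the subgradient at every single step: declaring the true subgradient $g_k=s$ and setting $e_k=-2s$ gives $\tilde g_k=-s$ and hence $x_{k+1}=x_k+hs$. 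Spreading the budget this way, rather than spending it in one large perturbation, is what keeps the iterate away from the optimum for the entire horizon.

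First I would verify admissibility. The total corruption is $\sum_{k=0}^{N-1}\|e_k\|^2=N(2s)^2=4Ns^2$, so choosing $s=\tfrac{\gamma}{2\sqrt N}$ spends exactly the budget $\gamma^2$; because $\gamma<L\sqrt N$ this also gives $s<L/2<L$, so the instance lies in $\mc F$ and the corruption is feasible. Under this adversary the iterates march out along a ray, $x_k=k\,hs$ for $k\in[0,\dots,N]$, all nonnegative.

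Next I would evaluate the output. As the step sizes are equal, the averaged iterate is the plain mean $x_{N+1}=\tfrac{1}{N+1}\sum_{k=0}^{N}x_k=\tfrac{hs}{N+1}\sum_{k=0}^{N}k=\tfrac{hsN}{2}$, and since it is positive, $f_{N+1}-f_\star=s\,x_{N+1}=\tfrac{hs^2N}{2}$. Substituting $s^2=\tfrac{\gamma^2}{4N}$ and $h=\tfrac{R}{L\sqrt{N+1}}$ collapses this to exactly $\tfrac{\gamma^2R}{8L\sqrt{N+1}}$, establishing the claim with equality. The quadratic dependence is transparent in the identity $f_{N+1}-f_\star=s\cdot x_{N+1}$: the averaged displacement $x_{N+1}$ is itself proportional to $s$, so the gap scales like $s^2$, while the budget constraint forces $s\propto\gamma$; the two factors of $s$ convert the linear corruption budget into a quadratic penalty. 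The two closing remarks are then pure rearrangement: $\tfrac{\gamma^2R}{8L\sqrt{N+1}}>RL$ exactly when $\gamma>2\sqrt2\,(N+1)^{1/4}L$, and the right-hand side diverges as $\gamma$ grows.

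Because the construction is exact, there is essentially nothing to estimate, and I expect the only delicate point to be conceptual rather than technical: identifying that the adversary should use a shallow, budget-tuned slope together with a uniformly reversed gradient to sustain the drift over all $N$ steps. A naive single large kick, or the natural steep slope $s=L$, lets the honest descent return the iterate to the optimum long before step $N$ and loses a factor of order $N$ in the rate. A minor formal point I would handle explicitly is the set-valued subgradient at the origin: I let the oracle return $g_0=s\in\partial f(0)$, which is legitimate and keeps every per-step corruption cost equal to $4s^2$; any other admissible choice at $x_0$ only cheapens the first displacement and so cannot weaken the bound.
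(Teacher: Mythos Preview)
Your proof is correct and follows essentially the same approach as the paper: the paper also takes $f(x)=\tfrac{\gamma}{2\sqrt N}|x|$ with $x_0=0$, chooses the adversary so that $\tilde g_k=-\tfrac{\gamma}{2\sqrt N}$ at every step, and arrives at the identical suboptimality gap $\gamma^2 h/8$. The only cosmetic difference is that the paper writes the perturbation as $e_k=-g_k-\tfrac{\gamma}{2\sqrt N}$ (allowing any $g_k\in\partial f(x_k)$) whereas you fix $g_k=s$ and set $e_k=-2s$, which amounts to the same thing once you declare $g_0=s\in\partial f(0)$.
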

\begin{appendixproof}[Proof of~\cref{lem:example}.]
    Consider the resisting function
\[
f(x)=\frac{\gamma}{2\sqrt{N}}|x|.
\]
Note that this function is convex, has minimizer $x_\star=0$ and is $L$-Lipschitz, since
\[
\|g_k\|\le\frac{\gamma}{2\sqrt{N}}\le \frac{L}{2}.
\]
We let the adversary corrupt the subgradient with noise $e_k=-g_k-\frac{\gamma}{2\sqrt{N}}$, which is within the budget, since
\[
\sum_{k=0}^{N-1}\|e_k\|^2\le\sum_{k=0}^{N-1}(\gamma/2\sqrt{N}+\gamma/2\sqrt{N})^2=\gamma^2. 
\]
We pick $x_0=0$, so that the iterates become
\[
x_k=x_{k-1}-h_{k-1}\tilde g_k=x_{k-1}+h\frac{\gamma}{2\sqrt N}=\frac{\gamma}{2\sqrt{N}}kh.
\]
The averaging step results in
\[
x_{N+1}=\frac{\gamma h}{2(N+1)\sqrt{N}}\sum_{k=0}^{N}k=\frac{\gamma h\sqrt{N}}{4}.
\]
So that the suboptimality gap is 
\[
f_{N+1}-f_\star=\frac{\gamma}{2\sqrt{N}}\cdot \frac{\gamma h\sqrt{N}}{4}=\frac{\gamma^2 h}{8},
\]
which exceeds the trivial bound $RL$ for $\gamma^2\ge 8RL/h$. 
The result follows after substituting $h=h^*$ from~\eqref{eq:classical-stepsize}.
\end{appendixproof}
\cref{lem:example} shows that any performance guarantee for the classical subgradient descent grows at least \emph{quadratically} with $\gamma$. However, we will show in this work that it is possible to construct a step size schedule with a performance guarantee that only grows \emph{linearly} with $\gamma$, for $\gamma\in(0,L\sqrt{N})$.

We derive a performance estimate which yields a convex performance optimization program resulting in a (nearly) optimal subgradient method. Our performance guarantee (which we describe in~\cref{cor:u-S-bound}) is of the form
\begin{equation}\label{eq:s-bound-main}
f_{N+1}-f_\star\le \frac{RL}{\sqrt{N+1}}u_N^{\mathbb S}(\gamma/L),
\end{equation}
where $u_N^{\mathbb S}(\sigma)\ge1$ can be computed by solving a convex semidefinite optimization problem.
Comparing~\eqref{eq:s-bound-main} to the classical bound~\eqref{eq:classical-stepsize},
we see that the perturbations affect the suboptimality gap by a factor that only depends on $N$ and $\sigma\defn\gamma/L$.
To prove that~\eqref{eq:s-bound-main} is close to optimal, we construct an auxiliary dual performance optimization problem with an almost matching performance lower bound.
The performance lower bound holds for a class of optimization methods which satisfy the cone condition $x_0-x_k\in\text{cone}(\tilde g_0,\dots,\tilde g_{k-1})$.
This class contains any subgradient method with non-negative step sizes, the Nesterov accelerated gradient descent method and indeed most practically relevant variable step size algorithms; see also \cite{fatkhullin2025sgdhandleheavytailednoise}.
For any such optimization method, we prove in~\cref{cor:lower} that there exists a problem instance where
\[
f_{N+1}-f_\star\ge\frac{RL}{\sqrt{N+1}}\ell_N(\sigma).
\] 
For $\gamma=0$ (i.e., uncorrupted subgradients), our performance bound coincides with the known universal lower bound for (uncorrupted) nonsmooth optimization~\citep{Drori_Teboulle_2016}. 

To obtain analytic performance guarantees, we further bound our convex performance optimization program. In~\cref{lem:admissible}, this leads to explicit formulas for a step size schedule with performance guarantee
\begin{equation}\label{eq:admissible-bound-main}
f_{N+1}-f_\star\le \frac{RL}{\sqrt{N+1}}u(\sigma),
\end{equation}
where the function $u(\sigma)\ge1$ is defined implicitly as the solution in
\[
\sigma^2=u^2-1-2\log u.
\]
We thus see that this factor only depends on $\sigma$.
The fact that the performance guarantee~\eqref{eq:admissible-bound-main} grows linearly with $\sigma$ (and hence also $\gamma$) follows from the bound $u(\sigma)\le 1+\sigma$.
The bound~\eqref{eq:admissible-bound-main} implies that we need $N=\bigO{\left((\gamma+L)\frac R\varepsilon\right)^2}$
iterations to achieve $f_{N+1}-f_\star\le \varepsilon$.
We further prove in~\cref{thm:squeeze} that for all $N\ge1$ and $\sigma\in[0,\sqrt{N}]$, it holds that
\[
\left(1-\frac52\frac{\log (N+1)}{N}\right) u(\sigma)\le \ell_N(\sigma)\le u_N^{\mathbb S}(\sigma)\le u(\sigma).
\]
This means that the performance guarantees of both the subgradient method associated with the convex semidefinite performance optimization problem and the explicit subgradient method attain a worst-case suboptimality gap which is asymptotically equivalent to the universal lower bound.

The explicit step sizes that achieve the performance guarantee~\eqref{eq:admissible-bound-main} are of the form
\begin{equation}\label{eq:admissible-steps-main}
h_k=\frac{R(N-k)}{L(N+1)^{3/2}}\cdot \frac{u(\sigma)}{u(\sigma)^2-(u(\sigma)^2-1)\frac{k}{N+1}}\cdot \xi_N(\sigma),
\end{equation}
for $k \in [0,\dots,N-1]$ and $h_N=\infty$. Here, $\xi_N(\sigma)$ is a small correction factor that we describe in~\cref{sec:analysis}. This correction factor satisfies the bounds $1\le \xi_N(\sigma)\le1+\frac2N$ for all $N\ge1$ and $\sigma\in[0,\sqrt N]$.
Moreover, the method given in~\eqref{eq:admissible-steps-main} does not use averaging or momentum, which suggests that these techniques are not necessary in this setting.

\subsection{Related Work}
The problem of convex optimization with \emph{exact} subgradients has been studied extensively. In \emph{smooth} optimization, the function $f$ is assumed to have Lipschitz continuous gradients. \citet{nesterov1983method} proposed \emph{Fast Gradient Descent} (FGM), which outperforms the classical gradient descent by an order of magnitude via a \emph{momentum} technique. 
This momentum technique has been further refined by \emph{Optimized Gradient Descent} (OGM,~\citet{kim_optimized_2016}), which improves the performance guarantee by a constant factor.
We refer to~\cite{nesterov_lectures_2018} for a complete overview of convex optimization with exact (sub)gradients. 

In many practical problems, it is infeasible or even impossible to obtain exact (sub)gradients.
\citet{liu2024nonasymptotic} give several examples of applications where exact gradients are unavailable. For example, when gradients need to be approximated by finite difference formulas or when the evaluating $f$ involves solving another optimization problem~\citep{ghadimi2018approximation}.
Perhaps the most common application where computing exact gradients is infeasible comes from training machine learning models on large data sets: computing an exact gradient of the loss function requires an iteration over the entire training set, which can be prohibitively expensive. To overcome this, one can randomly sample from the training set to obtain an unbiased estimate of the gradient~\citep{bottou2010large}.
This results in \emph{Stochastic Gradient Descent} (SGD), where the gradient perturbations are modeled by random variables~\citep{robbins1951stochastic,kiefer1952stochastic}. In the SGD literature, these perturbations are typically assumed to be unbiased and independent.

In other optimization problems, however, it may not be realistic to assume that the perturbations are unbiased and independent. In those settings, it makes sense to pose deterministic constraints on the perturbations and consider worst-case performance. Optimization with inexact gradients has mainly been studied in the context of smooth optimization:

\citet{devolder2014first} consider smooth optimization in a setting where both the gradient and the function value are inexact. They assume that the observed function value and gradient satisfy stage-wise constraints.
It is observed that momentum methods are more vulnerable to error accumulation than standard gradient descent methods.
\citet{liu2024nonasymptotic} use PEP to derive performance bounds of OGM and FGM for smooth optimization and stage-wise bounded errors, i.e., $\|e_k\|\le \varepsilon$ for every $k$. Their results confirm that these momentum methods are sensitive to accumulation of errors. The considered stage-wise corruption constraints mean that the adversary does not have to time their corruptions strategically, in contrast to the total corruption budget constraint that we consider in this work.

Instead of these stage-wise corruption budgets, \citet{chang_gradient_2022} limit the adversarial power by constraining the cumulative corruption $\sum_{i=0}^k\|e_i\|$ for every $k\in [0, \dots, N-1]$. They assume the objective function $f$ satisfies the Polyak-\L{}ojasiewicz smoothness condition and derive performance guarantees for gradient descent methods with variable step sizes.
In contrast to their step-wise cumulative corruption budgets, we consider a single total corruption budget, which allows the adversary to corrupt the first subgradients more heavily.

\citet{schmidt_convergence_2011} study \emph{proximal gradient descent} methods where both the gradient and the proximal operator are inexact. For constant step sizes, they prove a performance guarantee that increases quadratically with the total error $\sum_{k=0}^{N-1}\|e_k\|$.
\citet[Theorem 2]{atchade2017perturbed} extends these results to non-constant step sizes and provides sufficient conditions on the step size sequence and the perturbation sequence $\|e_k\|$ to guarantee convergence of the optimization method.

\citet{alistarh2018byzantine} combine SGD with adversarial corruptions in a distributed setting where a fraction of the `workers' provide adversarial gradient information. They show how SGD can be adapted to be robust to these \emph{Byzantine} failures.
Similarly, \citet{wang2021robust} study SGD for training neural networks in a setting where a fraction of the training data has been adversarially corrupted.
In the context of stochastic bandits, there have been similar efforts to robustify algorithms to adversarially corrupted output~\citep{lykouris2018stochastic}.





\subsection*{Notation}
For sequences $a_N,b_N$, we write $a_N\ll b_N$ or $a_N=o(b_N)$ if $\lim_{N\to\infty}\frac{a_N}{b_N}=0$. We write $a_N\sim b_N$ if $\lim_{N\to\infty}\frac{a_N}{b_N}=1$, and we write $a_N=\bigO{b_N}$ if there exists a $c>0$ such that $|a_N|\le c|b_N|$ for all $N$. 

\section{Performance Optimization Problems}
In this section, we provide optimization programs that yield upper and lower bounds on the worst-case suboptimality gap.

\subsection{Admissible Subgradient Methods}
Given that in the noiseless case, a simple subgradient algorithm with fixed step size schedule is optimal, it is natural to also consider these methods in the context of adversarial gradient noise. 
In fact, in what follows we will restrict attention to \emph{admissible} subgradient methods ($h\in \mc H$) for which
\(
\sum_{j=k+1}^N \tfrac{h_j}{(N-k)}
\)
is nondecreasing in $k\in [0, \dots, N-1]$. The main contribution of this section is to  construct a desirable performance estimate for such admissible subgradient methods. In Section \ref{sec:univ-lower-bound} we will quantify the extent to which this restriction causes a loss of optimality.

\begin{toappendix}
  \begin{lemma}[Admissible Subgradient Methods]
    We have that $h\in \mc H$ if and only if there exists $\lambda'_0\in \Re_+, \dots, \lambda'_{N+1}\in \Re_+$ satisfying
    \begin{equation}\label{eq:lambda-condition}
      (N-k) \lambda'_k - \sum_{i=0}^{k-1} \lambda'_i = \frac{1}{(N+1)} - \frac{h_k}{\sum_{i=0}^N h_i} \quad \forall k\in [0, \dots, N-1].
    \end{equation}   
  \end{lemma}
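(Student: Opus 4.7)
The plan is to solve the linear recursion for $\lambda'$ explicitly, after which the nonnegativity constraints $\lambda'_k \geq 0$ reduce to clean scalar inequalities on the step sizes. Introducing the partial sums $T_k := \sum_{i=0}^{k-1}\lambda'_i$ (so $T_0=0$ and $\lambda'_k = T_{k+1}-T_k$) and writing $S := \sum_{i=0}^N h_i$ and $b_k := \tfrac{1}{N+1} - \tfrac{h_k}{S}$, the constraint~\eqref{eq:lambda-condition} becomes the one-step linear recursion $(N-k)\, T_{k+1} = (N-k+1)\, T_k + b_k$. Since $\lambda'_N$ and $\lambda'_{N+1}$ never appear in~\eqref{eq:lambda-condition}, they may be set to $0$ for free, so it suffices to characterize nonnegativity of $\lambda'_0,\dots,\lambda'_{N-1}$.

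The first step is to verify by induction on $k$ that the unique solution of this recursion is $T_{k+1} = \tfrac{1}{N-k} \sum_{i=0}^k b_i$, and hence
\[
  \lambda'_k \;=\; T_{k+1}-T_k \;=\; \frac{(N-k+1)\, b_k + \sum_{i=0}^{k-1} b_i}{(N-k)(N-k+1)}.
\]
Next, substituting the definition of $b_i$, the constant contributions telescope via $\tfrac{(N-k+1)+k}{N+1}=1$, and the numerator collapses to $\tfrac{1}{S}\bigl(\sum_{j=k+1}^N h_j - (N-k)\, h_k\bigr)$. Consequently $\lambda'_k \geq 0$ is equivalent to $\tfrac{\sum_{j=k+1}^N h_j}{N-k} \geq h_k$ for every $k \in [0, N-1]$.

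Finally, setting $c_k := \tfrac{\sum_{j=k+1}^N h_j}{N-k}$ and exploiting the short algebraic identity $c_{k+1} - c_k = \tfrac{c_{k+1} - h_{k+1}}{N-k}$, the monotonicity of $(c_k)$ that defines $\mc H$ is itself equivalent to $c_k \geq h_k$ across the relevant indices. This delivers the claimed equivalence in both directions.

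The main obstacle is guessing and verifying the closed form for $T_{k+1}$; once that telescoping solution is in place, the rest is bookkeeping via the definitions of $b_k$ and $S$. A minor care point is the boundary index $k=0$, where the condition $\lambda'_0 \geq 0$ must be matched against the first element of the monotone averaged-step sequence.
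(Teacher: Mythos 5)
Your proof is correct and follows essentially the same route as the paper's: both solve the recursion for the partial sums $\sum_{i=0}^{k}\lambda'_i$ (your $T_{k+1}=\tfrac{1}{N-k}\sum_{i=0}^k b_i$ is exactly the paper's summed identity), reduce nonnegativity of the $\lambda'_k$ to the scalar inequalities $\tfrac{1}{N-k}\sum_{j=k+1}^N h_j \ge h_k$, and identify these with the monotonicity defining $\mc H$. The boundary case $\lambda'_0\ge 0$, equivalently $\tfrac{1}{N+1}\sum_{i=0}^N h_i \le \tfrac1N\sum_{i=1}^N h_i$, is treated the same way in the paper, which likewise folds it into the admissibility condition.
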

  \begin{proof}
    Fix step sizes $h_0,\dots,h_{N}\ge0$ and pick $\lambda'_0,\dots,\lambda_{N+1}'$ to satisfy~\eqref{eq:lambda-condition} (possibly with $\lambda_k<0$ for some values). We will prove that $\lambda_k\ge0$ for all $k$ is equivalent to $\sum_{j=k+1}^N \tfrac{h_j}{(N-k)}$ being nondecreasing.
    Summing~\eqref{eq:lambda-condition} for $k=0,\dots,j$ yields
    \[
    (N-j)\sum_{i=0}^j\lambda'_i=\frac{j+1}{N+1}-\frac{\sum_{k=0}^j h_k}{\sum_{i=0}^Nh_i}=\frac{\sum_{k=j+1}^N h_k}{\sum_{i=0}^Nh_i}-\frac{N-j}{N+1},
    \]
    which can be rewritten to
    \[
    \left(\sum_{i=0}^Nh_i\right)\sum_{i=0}^j\lambda'_i=\frac{\sum_{k=j+1}^N h_k}{N-j}-\frac{\sum_{i=0}^Nh_i}{N+1}.
    \]
    From this equation, we can see that $\sum_{i=0}^j\lambda'_i$ is nondecreasing if and only if $\frac1{N-j}\sum_{k=j+1}^N h_k$ is nondecreasing. Finally, the $\lambda_k'$ are all nonnegative if and only if the sum $\sum_{i=0}^j\lambda'_i$ is nondecreasing and $\lambda'_0\ge0.$ Thus, we still need to verify $\lambda'_0\ge0.$ For $k=0$, Equation~\eqref{eq:lambda-condition} reads
    \[
    N\lambda'_0=\frac1{N+1}-\frac{h_0}{\sum_{i=0}^N h_i},
    \]
    which is nonnegative if and only if
    \[
    h_0\le \frac1{N}\sum_{i=1}^N h_i.
    \]
    This is equivalent to
    \[
    \frac{1}{N+1}\sum_{i=0}^N h_i=\frac1{N+1}h_0+\frac{N}{N+1}\left(\frac1N\sum_{i=1}^N h_i\right)\le \left(\frac1{N+1}+\frac{N}{N+1}\right)\cdot\frac1N\sum_{i=1}^N h_i,
    \]
    which completes the proof.
  \end{proof}
\end{toappendix}

\begin{algorithm}[!ht]
\small
\DontPrintSemicolon
\KwIn{Function $f:\Re^d\to\Re$, number of iterations $N$, step size schedule $h \in \mc H$ and initial iterate $x_0\in\Re^d$.}
\BlankLine
\For{ $k=0,\dots,N-1$}{
  \BlankLine
Retrieve a noisy subgradient $\tilde g_k \in \partial f(x_k)+e_k$.
\BlankLine
$x_{k+1}=x_k-h_k \tilde g_k$
}
\BlankLine
\KwOut{
$x_{N+1} = \tfrac{\textstyle\sum_{k=0}^{N} h_k x_k}{\textstyle\sum^N_{k=0} h_k}$.
} 
\caption{Admissible subgradient method with step sizes $h_k$.}
\label{alg:step-sizes}
\end{algorithm}

A characteristic property of any fixed step size subgradient method is that we may write
\begin{equation}
  \label{eq:fsfom}
  x_{N+1} = x_0 - \textstyle \sum_{k=0}^{N-1} \alpha_{k} \tilde g_k.
\end{equation}
In other words, the final iterate $x_{N+1}$ is equal to the initial iterate and a conic combination of the noisy subgradients observed along the way. 
Straightforward calculation indicates that the relevant conic combination can be deduced from the step size schedule as
\(
  \alpha_k = h_k \cdot \textstyle\tfrac{\sum_{i=k+1}^N h_i}{\sum_{i=0}^N h_i} \geq 0
\)
for all $k\in [0, \dots, N-1]$. It is noteworthy to point out that two distinct subgradient methods for different step size schedules can be associated with the same conic combination $\alpha$. We call two subgradient methods equivalent if they share the same conic combination $\alpha$.  We denote by $$\mc H(\alpha) = \set{h\in \mc H}{h_k \cdot \textstyle\tfrac{\sum_{i=k+1}^N h_i}{\sum_{i=0}^N h_i}=\alpha_k ~~\forall k\in [0, \dots, N-1]}$$ the equivalence class of admissible subgradient methods associated with a particular conic combination $\alpha$.
A quick calculation reveals that both the classical subgradient method with averaging (see Equation (\ref{eq:classical-stepsize})) as well as the subgradient method of \cite{zamani2023exact} (see Equation (\ref{eq:zamani-stepsize})) satisfy Equation (\ref{eq:fsfom}) for 
\begin{equation}\label{eq:alpha-noiseless}
\alpha_k^* = \frac{R(N-k)}{L\sqrt{N+1}^3},    
\end{equation}
and are therefore equivalent.
Moreover, for any $h_N\in [\tfrac{R}{(L\sqrt{N+1})},\infty]$, we can find $h_0,\dots,h_{N-1}$ so that $h\in \mc H(\alpha^\star)$. We reveal in this section that this implies that there is a manifold of optimal subgradient methods which interpolates between the classical subgradient method with averaging $(h_N=\tfrac{R}{(L\sqrt{N+1})})$ and the subgradient method from \cite{zamani2023exact} (corresponding to $h_N=\infty$).

In the following theorem we will advance a performance estimate which depends on the step size schedule only through its associated conic combination $\alpha$.
That is, two equivalent subgradient methods will enjoy precisely the same performance guarantee. 
In particular, this suggests to design a subgradient method by optimizing over the conic combination $\alpha$ rather than the step size schedule directly.

\begin{toappendix} 
\begin{lemma}\label{lem:bound}
  Consider a step size schedule $h\in \mc H(\alpha)$ satisfying the condition give in Equation (\ref{eq:lambda-condition}).
    Then, the suboptimality gap of the averaged iterate $x_{N+1}$ satisfies the bound
    \begin{equation}
        f_{N+1}-f_*\le \sum_{k=0}^N \frac{1}{N+1} \langle g_k , x_0 - x_\star \rangle - \sum_{k=0}^N \sum_{j=0}^{k-1} \frac{\alpha_j}{N-j} \langle g_k, g_j+e_j\rangle.
    \end{equation}
\end{lemma}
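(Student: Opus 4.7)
The plan is to derive the bound by applying Jensen's inequality to the averaged iterate and then re-weighting with the help of the nonnegative multipliers $\lambda'_k$ provided by the admissibility characterization. First, since $x_{N+1}$ is a convex combination of $x_0,\dots,x_N$ with weights $w_k \defn h_k/\sum_i h_i$, convexity of $f$ gives $f_{N+1} - f_\star \le \sum_{k=0}^N w_k (f_k - f_\star)$. To surface the uniform weights $\tfrac{1}{N+1}$ appearing in the target bound, I would split $w_k = \tfrac{1}{N+1} + (w_k - \tfrac{1}{N+1})$. Using the admissibility identity $w_k - \tfrac{1}{N+1} = \sum_{i<k}\lambda'_i - (N-k)\lambda'_k$ from \eqref{eq:lambda-condition}, a swap in the order of summation rewrites the excess term as $\sum_{k=0}^N \lambda'_k \sum_{j=k+1}^N (f_j - f_k)$.

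Because $\lambda'_k \ge 0$, I can now safely substitute the subgradient inequalities $f_k - f_\star \le \langle g_k, x_k - x_\star\rangle$ and $f_j - f_k \le \langle g_j, x_j - x_k\rangle$ without reversing the direction of the bound. The next step is to expand $x_k - x_\star = (x_0 - x_\star) - \sum_{j<k} h_j(g_j + e_j)$ and $x_j - x_k = -\sum_{i=k}^{j-1} h_i(g_i + e_i)$ using the subgradient recursion, then reindex the resulting triple sum so that the outer subgradient index runs over $k\in[0,N]$ and the inner one over $j<k$. The coefficient of $\langle g_k, x_0 - x_\star\rangle$ comes out to exactly $\tfrac{1}{N+1}$, as desired.

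The final step is to show that the coefficient of $\langle g_k, g_j + e_j\rangle$ collapses to $-\tfrac{\alpha_j}{N-j}$. Tracking contributions from both the uniform term and the $\lambda'$-weighted term, this coefficient simplifies to $-h_j\bigl(\tfrac{1}{N+1} + \Lambda_j\bigr)$ with $\Lambda_j \defn \sum_{i \le j}\lambda'_i$. Plugging in the closed form $(\sum_i h_i)\Lambda_j = \tfrac{\sum_{i=j+1}^N h_i}{N-j} - \tfrac{\sum_i h_i}{N+1}$ derived in the proof of the admissibility lemma reduces this to $-\tfrac{h_j}{\sum_i h_i}\cdot\tfrac{\sum_{i=j+1}^N h_i}{N-j} = -\tfrac{\alpha_j}{N-j}$, matching the claim. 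The main obstacle is purely bookkeeping: the triple sum produced by expanding $x_j - x_k$ must be carefully reindexed, and the $\Lambda_j$ identity invoked at precisely the right moment; everything else follows from a mechanical combination of Jensen and the two families of convexity inequalities.
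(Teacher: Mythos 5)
Your proof is correct and follows essentially the same route as the paper's: aggregate the convexity inequalities with the nonnegative multipliers $\tfrac{1}{N+1}$ and $\lambda'_k$ from \eqref{eq:lambda-condition}, telescope the iterates, and collapse the coefficient of $\langle g_k, g_j+e_j\rangle$ via the closed form for $\sum_{i\le j}\lambda'_i$. The only cosmetic difference is that you invoke Jensen's inequality on the averaged iterate directly, whereas the paper reaches the same intermediate bound $f_{N+1}\le \sum_k w_k f_k$ by summing the subgradient inequality at $x_{N+1}$ with weights $w_k$ (so that the $\langle g_{N+1},\cdot\rangle$ term vanishes by the choice of $x_{N+1}$).
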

\begin{proof}[Proof of Lemma \ref{lem:bound}]
    By convexity, the subgradients satisfy the following inequalities
\begin{align*}
  f_\star & \geq f_k + \langle g_k , x_\star - x_k \rangle \quad \forall k\in [0, \dots, N]\\
  f_k &\geq f_{N+1} + \langle g_{N+1} , x_k - x_{N+1} \rangle \quad \forall k \in [0, \dots, N]\\
  f_i & \geq f_k + \langle g_k, x_i-x_k\rangle \quad \forall k \in [0, \dots, N],~ \forall i\in [0, N]: ~k\geq i+1.
\end{align*}
Summing the constraints after multiplying by $1/(N+1)$, $h_k/\sum_{i=0}^N h_i$ and $\lambda_i\geq 0$, respectively, gives
\begin{align*}
&f_*+\frac1{\sum_{k=0}^Nh_k}\sum_{k=0}^Nh_kf_k+\sum_{i=0}^N\sum_{k=i+1}^N\lambda_if_i \ge\\
 &\frac1{N+1}\sum_{k=0}^N\left( f_k+\langle g_k , x_\star - x_k \rangle\right) +f_{N+1}+\left\langle g_{N+1},\frac1{\sum_{k=0}^Nh_k}\sum_{k=0}^Nh_kx_k-x_{N+1}\right\rangle+\sum_{i=0}^N\sum_{k=i+1}^N\lambda_i\left(f_k+\langle g_k,x_i-x_k\rangle\right).
\end{align*}
Our choice of $x_{N+1}$ yields
\[
\frac1{\sum_{k=0}^Nh_k}\sum_{k=0}^Nh_kx_k-x_{N+1}=0,
\]
so that one of the inner products vanishes.
Interchanging the double sum and re-ordering terms yields
\begin{align*}
  & \sum_{k=0}^N \frac{1}{N+1} \langle g_k , x_k - x_\star \rangle + \sum_{k=0}^N \sum_{i=0}^{k-1} \lambda_i \langle g_k, x_k-x_i\rangle \\
  \geq & f_{N+1} - f_\star +  \sum_{k=0}^N f_k\left(\frac{1}{N+1} -\left(\frac{h_k}{\sum_{i=0}^N h_i}\right) - (N-k)\lambda_k + \sum_{i=0}^{k-1} \lambda_i  \right).
\end{align*}
Take now $\lambda_i\ge0$ satisfying Equation \eqref{eq:lambda-condition} to get a valid bound.
Summing Equation \eqref{eq:lambda-condition} over $k=0,\dots, j$  gives the equivalent set of conditions
\[
  (N-j) \sum_{i=0}^{j} \lambda_i = \tfrac{(j+1)}{(N+1)} - \tfrac{\sum_{k=0}^{j}h_k}{\sum_{i=0}^N h_i} \quad \forall j\in [0, \dots, N-1].
\]
Or equivalently,
\begin{equation}\label{eq:lambdas-summed}
\sum_{i=0}^{j} \lambda_i = \frac{1}{N-j}\left(1-\frac{N-j}{(N+1)} - \tfrac{\sum_{i=0}^{j}h_i}{\sum_{i=0}^N h_i}\right)=\frac{\sum_{i=j+1}^{N}h_i}{(N-j)\sum_{i=0}^N h_i}-\frac1{N+1}.
\end{equation}
With this choice of $\lambda_i$ our bound becomes
\begin{equation}\label{eq:bound1}
f_{N+1}-f_*\le \sum_{k=0}^N \frac{1}{N+1} \langle g_k , x_k - x_\star \rangle + \sum_{k=0}^N \sum_{i=0}^{k-1} \lambda_i \langle g_k, x_k-x_i\rangle.
\end{equation}
We use
\begin{equation}
  \label{eq:linearity-iterations}
  x_k-x_i=-\sum_{j=i}^{k-1}h_j (g_j+e_j)
\end{equation}
to rewrite
\begin{align}
\sum_{i=0}^{k-1} \lambda_i \langle g_k, x_k-x_i\rangle
&=-\sum_{i=0}^{k-1} \sum_{j=i}^{k-1} \lambda_i h_j \langle g_k, g_j+e_j\rangle=-\sum_{j=0}^{k-1}h_j\langle g_k, g_j+e_j\rangle\sum_{i=0}^j\lambda_i
\label{eq:gradient-inner-product}
\end{align}
Multiplying Equation~\eqref{eq:lambdas-summed} with $h_j$ yields
\[
h_j\sum_{i=0}^{j} \lambda_i =\frac{\alpha_j}{N-j}-\frac{h_j}{N+1}.
\]
Substituting this into~\eqref{eq:gradient-inner-product} and~\eqref{eq:bound1} improves our bound to
\[
f_{N+1}-f_*\le \sum_{k=0}^N \frac{1}{N+1} \langle g_k , x_k - x_\star \rangle - \sum_{k=0}^N \sum_{j=0}^{k-1} \left(\frac{\alpha_j}{N-j}-\frac{h_j}{N+1}\right) \langle g_k, g_j+e_j\rangle.
\]
Similarly, substituting
\[
x_k=x_0-\sum_{j=0}^{k-1}h_j(g_j+e_j)
\]
yields
\[
f_{N+1}-f_*\le \sum_{k=0}^N \frac{1}{N+1} \langle g_k , x_0 - x_\star \rangle - \sum_{k=0}^N \sum_{j=0}^{k-1} \frac{\alpha_j}{N-j} \langle g_k, g_j+e_j\rangle,
\]
which completes the proof.
\end{proof}
\end{toappendix}

\begin{proposition}
  \label{thm:main-upper-bound}
  Consider Algorithm~\ref{alg:step-sizes} with step size schedule $h\in \mc H(\alpha)$.
  For any noise level $\gamma \geq 0$, Algorithm \ref{alg:step-sizes} satisfies
  \begin{align*}
    & f_{N+1} - f_\star \leq E(\alpha)\defn \\
    &     \begin{array}{r@{~}l}
            RL\cdot \min & \left(\tau \sigma^2 +  \nu_\star  + \sum_{k=0}^{N} \nu_k\right) \\[1em]
             \st & \tau\in \Re_+, ~\nu_k\geq 0 ~~\forall k \in [\star, 0, \dots, N], \\[1em]
         & 0\preceq \Lambda(\nu, \tau, \alpha L/R) = \\
         & 
           \left(\begin{array}{c@{\,~}c@{\,~}c@{\,~}c@{\,~}c@{\,~}c@{\,~}c|c@{\,~}c@{\,~}c@{\,~}c@{\,~}c}
        \nu_\star & \frac{-1}{2(N+1)} & \frac{-1}{2(N+1)} &\frac{-1}{2(N+1)}  & \dots & \frac{-1}{2(N+1)} & \frac{-1}{2(N+1)} & 0 & 0 & 0 & \dots & 0  \\
        \frac{-1}{2(N+1)} &\nu_0 & \frac{\alpha_0 L}{2N R} & \frac{\alpha_0L}{2NR} & \dots & \frac{\alpha_0L}{2NR} & \frac{\alpha_0L}{2NR} & 0 & 0 & 0& \dots & 0  \\
        \frac{-1}{2(N+1)} & \frac{\alpha_0L}{2NR} & \nu_1 & \frac{\alpha_1L}{2(N-1)R} & \dots & \frac{\alpha_1L}{2(N-1)R}  & \frac{\alpha_1L}{2(N-1)R} & \frac{\alpha_0L}{2NR} & 0 & 0 & \dots &0   \\
        \frac{-1}{2(N+1)} & \frac{\alpha_0L}{2NR} & \frac{\alpha_1L}{2(N-1)R} & \nu_2 &\dots  & \frac{\alpha_2L}{2(N-2)R} &  \frac{\alpha_{2}L}{2(N-2)R} & \frac{\alpha_0L}{2NR} & \frac{\alpha_1L}{2(N-1)R}& 0 & \dots & 0 \\
        \vdots & \vdots & \vdots & \vdots &  \ddots & \vdots & \vdots &\vdots &\vdots & \vdots & \ddots & \vdots \\
        \frac{-1}{2(N+1)} & \frac{\alpha_0L}{2NR} & \frac{\alpha_1L}{2(N-1)R} & \frac{\alpha_2L}{2(N-2)R} & \dots& \nu_{N-1} & \frac{\alpha_{N-1}L}{2R} & \frac{\alpha_0L}{2NR} & \frac{\alpha_1L}{2(N-1)R} & \frac{\alpha_2L}{2(N-2)R} & \dots & 0 \\
        \frac{-1}{2(N+1)} & \frac{\alpha_0L}{2NR} & \frac{\alpha_1L}{2(N-1)R} & \frac{\alpha_2L}{2(N-2)R} & \dots & \frac{\alpha_{N-1}L}{2R} & \nu_N & \frac{\alpha_0L}{2NR} & \frac{\alpha_1L}{2(N-1)R} & \frac{\alpha_2L}{2(N-2)R} & \dots &  \frac{\alpha_{N-1}L}{2R} \\
        \hline
        0 & 0 & \frac{\alpha_0L}{2NR} & \frac{\alpha_0L}{2NR} & \dots & \frac{\alpha_0L}{2NR} & \frac{\alpha_0L}{2NR}  & \tau & 0 & 0 & \dots & 0 \\
        0 & 0 & 0 & \frac{\alpha_1L}{2(N-1)R}& \dots & \frac{\alpha_1L}{2(N-1)R} & \frac{\alpha_1L}{2(N-1)R}  & 0 & \tau & 0 & \dots & 0 \\
        0 & 0 & 0 & 0 & \dots & \frac{\alpha_2L}{2(N-2)R} & \frac{\alpha_2L}{2(N-2)R}  & 0 & 0 & \tau & \dots & 0 \\
        \vdots & \vdots & \vdots & \vdots & \ddots & \vdots & \vdots  & \vdots & \vdots & \vdots & \ddots & \vdots \\
      0 & 0 & 0 & 0 & \dots & 0 & \frac{\alpha_{N-1}L}{2R}  & 0 & 0 & 0 & \dots & \tau \\
    \end{array}\right).
  \end{array}
  \end{align*}
\end{proposition}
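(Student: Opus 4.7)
The approach is to chain together two pieces: the pointwise bound already supplied by Lemma~\ref{lem:bound} and a standard Gram-matrix/S-procedure relaxation of the resulting quadratic maximization over the initial error, the true subgradients, and the corruption vectors. Because Lemma~\ref{lem:bound} has already absorbed all convexity inequalities of $f$, nothing beyond the norm constraints $\|x_0 - x_\star\| \le R$, $\|g_k\| \le L$, and the budget $\sum_j \|e_j\|^2 \le \gamma^2$ still needs to be invoked.

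First I would normalize the variables by setting $u \defn (x_0 - x_\star)/R$, $v_k \defn g_k/L$ and $w_j \defn e_j/L$ (so that $\sum_j \|w_j\|^2 \le \sigma^2$), and divide the conclusion of Lemma~\ref{lem:bound} through by $RL$ to obtain
\[
\frac{f_{N+1} - f_\star}{RL}\ \le\ \sum_{k=0}^N \frac{1}{N+1}\langle v_k, u\rangle\ -\ \sum_{k=0}^N \sum_{j=0}^{k-1} \frac{\alpha_j L}{(N-j) R}\langle v_k,\, v_j + w_j\rangle.
\]
For any nonnegative multipliers $\nu_\star, \nu_0, \dots, \nu_N, \tau$, I would then form the dual quadratic form
\[
Q(u,v,w)\ \defn\ \nu_\star \|u\|^2 + \sum_{k=0}^N \nu_k \|v_k\|^2 + \tau \sum_{j=0}^{N-1} \|w_j\|^2\ -\ \text{(the right-hand side above)},
\]
and identify its matrix of coefficients on the stacked tuple $(u, v_0, \dots, v_N, w_0, \dots, w_{N-1})$ with the block matrix $\Lambda(\nu, \tau, \alpha L/R)$ displayed in the proposition: the diagonal entries come from the $\nu$- and $\tau$-terms; the entries $-1/(2(N+1))$ come from halving the $\langle v_k, u\rangle$ coefficients; the entries $\alpha_j L/(2(N-j)R)$ come from halving both the $\langle v_k, v_j\rangle$ and the $\langle v_k, w_j\rangle$ coefficients for $j < k$; and the vanishing off-diagonal blocks between $u$ and the $w_j$'s and between distinct $w_i, w_j$'s reflect the absence of those cross-terms in the objective.

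Finally, I would invoke that the Gram matrix $M$ of any realization of $(u,v,w)$ is positive semidefinite; the identity $Q(u,v,w) = \operatorname{tr}(\Lambda M)$ then shows that $\Lambda \succeq 0$ forces $Q \ge 0$, which rearranges to
\[
\frac{f_{N+1} - f_\star}{RL}\ \le\ \nu_\star + \sum_{k=0}^N \nu_k + \tau \sigma^2.
\]
Multiplying by $RL$ and minimizing the right-hand side over all feasible multipliers $(\nu, \tau)$ with $\Lambda \succeq 0$ gives exactly $E(\alpha)$. The main obstacle is the purely clerical but sizeable bookkeeping step of matching every bilinear inner product coming out of Lemma~\ref{lem:bound} with the correct $(i, \ell)$ entry of the large block matrix, and in particular verifying the lower-triangular ``shift'' pattern that is visible in the $v$-$w$ off-diagonal block; once that matching is done, the semidefinite relaxation is just standard weak duality for a vector-valued QCQP and presents no further difficulty.
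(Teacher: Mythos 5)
Your proposal is correct and follows essentially the same route as the paper: it combines Lemma~\ref{lem:bound} with a Gram-matrix representation of the resulting bilinear bound and then applies weak Lagrangian duality (equivalently, the S-procedure certificate $\operatorname{tr}(\Lambda M)\ge 0$) against the three norm constraints, which is exactly how the paper obtains $E(\alpha)$. The only difference is cosmetic — you normalize the variables by $R$ and $L$ and write out the dual certificate explicitly where the paper states the primal maximization and invokes duality as a black box.
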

\begin{appendixproof}[Proof of Theorem \ref{thm:main-upper-bound}.]
    The right-hand side in the bound from Lemma \ref{lem:bound} can be rewritten to
    \begin{align*}
      \sum_{k=0}^N \frac{1}{N+1} \langle g_k , x_0 - x_\star \rangle - \sum_{k=0}^N \sum_{j=0}^{k-1} \frac{\alpha_j}{N-j} \langle g_k, g_j+e_j\rangle = &  \langle -\Lambda(0, 0, \alpha), G\rangle
    \end{align*}
    where the Grammian $G = [x_0-x_\star| g_0, \dots, g_{N}| e_0,\dots, e_{N-1}]^\top [x_0-x^\star| g_0, \dots, g_{N}| e_0,\dots, e_{N-1}]\rangle\succeq 0$ collects all relevant inner products. Hence,
    \begin{align*}
      & f_{N+1} - f_\star\\
      \leq & \left\{
             \begin{array}{rl@{\hspace{4em}}l}
               \max & \multicolumn{2}{l}{\langle -\Lambda(0, 0, \alpha), G \rangle} \\[0.5em]
               \st & G(x_0-x_\star, x_0-x_\star) = \norm{x_0-x_\star}^2 \leq R^2 & [{\rm{Dual~Variable:}}~ \nu_\star\geq 0]\\[0.5em]
                    & G(g_k, g_k) = \norm{g_{k}}^2\leq L^2 & [{\rm{Dual~Variable:}}~ \nu_k\geq 0] ~~\forall k\in [0, \dots, N]\\[0.5em]
                    & \sum_{k=0}^{N-1}  G(e_k, e_k) = \sum_{k=0}^{N-1} \norm{e_{k}}^2\leq \gamma^2& [{\rm{Dual~Variable:}}~ \tau\geq 0].
             \end{array}\right.
    \end{align*}
    The claimed result now follows from a standard application Lagrangian duality. 
  \end{appendixproof}
Since $h=(\alpha_0, \dots, \alpha_{N-1}, \infty) \in \mc H(\alpha)$, we have that $\mc H(\alpha)\neq \emptyset \iff \alpha\geq 0$. Hence, the performance optimization problem of finding the subgradient method with best performance estimate reduces to the following convex semidefinite optimization problem:

\begin{corollary}\label{cor:u-S-bound}
    Let $u_N^{\mathbb S}(\sigma)$ be given by
    \begin{align}
  \label{eq:gen_pop}
  u^{\mathbb S}_N(\sigma) \defn &  \left\{\!\!
          \begin{array}{r@{~}l}
            \sqrt{N+1}\cdot \min & \tau \sigma^2 + \nu_\star + \sum_{k=0}^{N} \nu_k \\[1em]
            \st & \alpha\in \Re_+^{N}, ~\tau\in \Re_+, ~\nu_k\geq 0 \quad \forall k \in [\star, 0, \dots, N], \\[1em]
                 &  \Lambda(\nu, \tau, \alpha L/R) \succeq 0,
           \end{array}\right.
\end{align}
and let $\alpha^*$ denote the corresponding solution. Any $h\in\mc H(\alpha^*)$ enjoys the performance guarantee
\[
f_{N+1}-f_\star\le \frac{RL}{\sqrt{N+1}}u^{\mathbb S}_N(\sigma).
\]
\end{corollary}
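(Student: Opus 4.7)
The plan is to read this off Proposition~\ref{thm:main-upper-bound} essentially for free, treating it as a minimization-over-equivalence-classes result. That proposition already delivers, for any admissible step size schedule $h \in \mc H(\alpha)$, the worst-case bound $f_{N+1} - f_\star \le E(\alpha)$, and this bound depends on $h$ only through the associated conic combination $\alpha$. Consequently, every member of the equivalence class $\mc H(\alpha)$ inherits the same performance estimate, and the only remaining task is to (i) identify the admissible set of $\alpha$'s, (ii) minimize $E(\alpha)$ over that set, and (iii) check that the result matches the normalization $\tfrac{RL}{\sqrt{N+1}}\, u_N^{\mathbb S}(\sigma)$.

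For step (i), the feasible region is precisely $\alpha \in \Re_+^N$. Necessity is immediate, since each $\alpha_k$ is by definition a nonnegative multiple of nonnegative step sizes. Sufficiency is the point flagged in the paragraph just before the corollary statement: given any $\alpha \ge 0$, the explicit schedule $h = (\alpha_0, \dots, \alpha_{N-1}, \infty)$ lies in $\mc H(\alpha)$. The defining identity $h_k \cdot \tfrac{\sum_{i=k+1}^N h_i}{\sum_{i=0}^N h_i} = \alpha_k$ reduces to $h_k = \alpha_k$ in the limit $h_N \to \infty$, and the admissibility sequence $\tfrac{1}{N-k}\sum_{j=k+1}^N h_j$ is nondecreasing trivially (each term is $+\infty$). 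I would make this rigorous either by treating $h_N=\infty$ as a limit of finite admissible schedules or by plugging directly into the characterization of $\mc H$ derived in the appendix with the ``point at infinity'' convention.

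For steps (ii)--(iii), the rearrangement is purely cosmetic: merging the outer minimization over $\alpha \ge 0$ with the inner minimization over $(\tau,\nu)$ appearing in $E(\alpha)$, and factoring the scalar $RL/\sqrt{N+1}$ out of the objective to match the normalization in~\eqref{eq:gen_pop}, gives
\[
\min_{\alpha \ge 0} E(\alpha) \;=\; \frac{RL}{\sqrt{N+1}} \, u_N^{\mathbb S}(\sigma).
\]
Letting $\alpha^\star$ denote a minimizer of this joint SDP, every $h \in \mc H(\alpha^\star)$ therefore satisfies $f_{N+1}-f_\star \le \tfrac{RL}{\sqrt{N+1}}\, u_N^{\mathbb S}(\sigma)$, which is exactly the claim.

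I do not foresee any real obstacle. The only mildly delicate point is the handling of the degenerate $h_N=\infty$ schedule when verifying $\mc H(\alpha)\neq \emptyset$; this is a short verification but is what allows the clean equivalence between optimizing over step size schedules and optimizing over conic combinations $\alpha$ in the SDP.
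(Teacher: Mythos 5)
Your proposal is correct and follows exactly the route the paper takes: the paper gives no separate proof of this corollary, relying on the one-sentence observation preceding it that $h=(\alpha_0,\dots,\alpha_{N-1},\infty)\in\mc H(\alpha)$ makes $\mc H(\alpha)$ nonempty precisely for $\alpha\ge 0$, so that the corollary is just Proposition~\ref{thm:main-upper-bound} with the minimization over $\alpha\ge0$ merged into the SDP and the factor $RL/\sqrt{N+1}$ pulled out. Your additional care with the $h_N=\infty$ limit is a reasonable way to make that one-line observation rigorous, but it is not a departure from the paper's argument.
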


Although the performance optimization problem (\ref{eq:gen_pop}) reduces to a semidefinite optimization problem, it is difficult to analyze analytically. In the following result, we introduce a more manageable second-order cone performance optimization problem which will allow us to construct near optimal analytic step size schedules in Section~\ref{sec:analysis}. 

\begin{proposition}\label{prop:u-L-bound}
  Consider the convex optimization problem
  \begin{equation}
    \label{eq:exact-convex-procedure}
    \begin{array}{rl}
      (u^{\mathbb L}_N(\sigma))^2 \defn \min  & \frac{\sigma^2 + \sum_{k=0}^{N} y_k }{(N+1)y_0}\\
                             \st & y_0\geq 0,\\
                                   & y_{k+1} \geq y_{k} + y_{k}^2 \quad \forall k\in [0, \dots, N-1].
    \end{array}
  \end{equation}
  Let
  \[
    \alpha^{\mathbb L}_k = \frac{R}{L}\cdot\frac{N-k}{(N+1)^{3/2}} \cdot \frac{y_k^\star }{y_0^\star u^{\mathbb L}_N(\sigma)} \quad \forall k\in [0, \dots, N-1]
  \]
  with $y^\star$ an optimal solution in \eqref{eq:exact-convex-procedure}.
  Any subgradient method with step size schedule $h\in \mc H(\alpha^{\mb L})$ enjoys the performance guarantee
  \[
  f_{N+1} - f_\star\leq \frac{RL}{\sqrt{N+1}}\cdot u^{\mathbb L}_N(\sigma).
  \]

\end{proposition}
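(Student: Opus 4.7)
The strategy is to start from the bound of Lemma~\ref{lem:bound}, which holds for any admissible subgradient method $h \in \mc H(\alpha^{\mathbb L})$, and massage its right-hand side using Young's inequality together with the SOCP constraint $y_{k+1}^\star \geq y_k^\star + (y_k^\star)^2$. Writing $\beta_j = \alpha_j^{\mathbb L}/(N-j)$ and $S_j = \sum_{k>j} g_k$, the lemma yields $f_{N+1} - f_\star \leq \tfrac{1}{N+1}\langle S_{-1}, x_0 - x_\star\rangle - \sum_{j=0}^{N-1}\beta_j\langle S_j, g_j\rangle - \sum_{j=0}^{N-1}\beta_j\langle S_j, e_j\rangle$; any bound I derive on this expression becomes a performance guarantee for every element of $\mc H(\alpha^{\mathbb L})$.

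First, I would telescope the gradient cross-terms via the polarization identity $2\langle S_j, g_j\rangle = \|S_{j-1}\|^2 - \|S_j\|^2 - \|g_j\|^2$ and bound the adversarial cross-terms by Young's inequality with a common parameter $\tau > 0$: $-\beta_j\langle S_j, e_j\rangle \leq \tfrac{\beta_j^2}{4\tau}\|S_j\|^2 + \tau\|e_j\|^2$. Collecting contributions, each $\|S_j\|^2$ for $0 \leq j \leq N-2$ acquires coefficient $\tfrac{\beta_j-\beta_{j+1}}{2} + \tfrac{\beta_j^2}{4\tau}$. The crucial observation is that with the proposition's definition of $\alpha^{\mathbb L}$, the identity $\beta_j = 2\tau y_j^\star$ holds for the specific choice $\tau = \tfrac{R}{2L(N+1)^{3/2} y_0^\star u^{\mathbb L}_N(\sigma)}$, so the SOCP constraint $y_{k+1}^\star \geq y_k^\star + (y_k^\star)^2$ rewrites exactly as $\beta_{j+1} - \beta_j \geq \beta_j^2/(2\tau)$. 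This renders every such coefficient non-positive and removes these $\|S_j\|^2$ terms from the bound.

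The remaining pieces are routine: a second application of Young with parameter tuned to $\beta_0$ turns $\tfrac{1}{N+1}\langle S_{-1}, x_0 - x_\star\rangle$ into $\tfrac{\beta_0}{2}\|S_{-1}\|^2 + \tfrac{R^2}{2\beta_0(N+1)^2}$, so that the first square cancels the $-\tfrac{\beta_0}{2}\|S_{-1}\|^2$ left by the telescoping; the budget bounds $\|g_j\|^2 \leq L^2$, $\|x_0-x_\star\|^2 \leq R^2$ and $\sum\|e_j\|^2 \leq \gamma^2$ then collapse the right-hand side to $\tfrac{R^2}{4\tau y_0^\star (N+1)^2} + \tau L^2\bigl(\sigma^2 + \sum_{j=0}^N y_j^\star\bigr)$. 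Substituting the SOCP identity $\sigma^2 + \sum_{j=0}^N y_j^\star = (N+1) y_0^\star (u^{\mathbb L}_N(\sigma))^2$ and optimizing over $\tau > 0$ via AM-GM produces exactly $\tfrac{RL}{\sqrt{N+1}} u^{\mathbb L}_N(\sigma)$, with the optimizing $\tau$ matching the one implicit in the definition of $\alpha^{\mathbb L}$.

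\textbf{Main obstacle.} The subtle step is the boundary contribution at $j = N-1$: telescoping leaves an unpaired $\tfrac{\beta_{N-1}}{2}\|S_{N-1}\|^2$ on top of the Young term $\tfrac{\beta_{N-1}^2}{4\tau}\|S_{N-1}\|^2$. Since $\|S_{N-1}\|^2 = \|g_N\|^2 \leq L^2$, I plan to absorb these boundary pieces by invoking the SOCP inequality once more at $k = N-1$, which produces precisely the extra $\tau y_N^\star L^2$ needed to extend the sum $\sum_{j=0}^{N-1}\beta_j$ up to $\sum_{j=0}^N\beta_j$ so that the SOCP normalization matches. Tracking this endpoint absorption carefully so that the final inequality lines up with $\sigma^2 + \sum_{j=0}^N y_j^\star$ is the main bookkeeping challenge of the proof.
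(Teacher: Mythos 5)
Your proposal is correct, and it reaches the bound by a genuinely different route than the paper. The paper proves Proposition~\ref{prop:u-L-bound} by passing through the semidefinite performance estimate of Proposition~\ref{thm:main-upper-bound}: it exhibits explicit dual variables $\nu,\tau$ and applies Schur's complement twice to the block matrix $\Lambda$, showing that positive semidefiniteness reduces to the recursion $\tfrac{\alpha_{k+1}}{N-k-1}\geq\tfrac{\alpha_k}{N-k}+\tfrac{\alpha_k^2}{2(N-k)^2\tau}$, which after the substitution $y_k=\tfrac{\alpha_k}{2(N-k)\tau}$ becomes the SOCP constraint $y_{k+1}\geq y_k+y_k^2$; the final minimization over $\tau$ is then identical to your AM--GM step. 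You instead work directly from Lemma~\ref{lem:bound} with the partial sums $S_j$, telescoping via polarization and absorbing the adversarial cross-terms by Young's inequality; the condition that each $\|S_j\|^2$ coefficient $\tfrac{\beta_j-\beta_{j+1}}{2}+\tfrac{\beta_j^2}{4\tau}$ be nonpositive is exactly the scalar shadow of the paper's Schur-complement off-diagonal cancellation, and your Young parameters play the role of the dual variables $\nu_k,\tau$. Both arguments construct the same dual certificate and arrive at the same intermediate objective $\tfrac{R^2}{4\tau y_0^\star(N+1)^2}+\tau L^2(\sigma^2+\sum_{k=0}^N y_k^\star)$; what your version buys is an elementary, self-contained derivation that never invokes the SDP or matrix positive semidefiniteness, at the cost of the careful endpoint bookkeeping at $j=N-1$ that you correctly identify (and correctly resolve: $\tfrac{\beta_{N-1}}{2}+\tfrac{\beta_{N-1}^2}{4\tau}=\tau(y_{N-1}^\star+(y_{N-1}^\star)^2)\leq\tau y_N^\star$, which is precisely the missing summand needed to complete $\sum_{j=0}^N y_j^\star$). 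The paper's route, by contrast, generalizes mechanically to the full semidefinite bound $u_N^{\mathbb S}$ of Corollary~\ref{cor:u-S-bound}, which your scalar decomposition does not immediately recover.
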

\begin{appendixproof}[Proof of Proposition \ref{prop:u-L-bound}.]

Following the partition of $\Lambda$ into submatrices from Theorem \ref{thm:main-upper-bound} we write 
\[
\Lambda(\nu, \tau, \alpha) =\left(\begin{array}{cc}
    A & B \\
    B^\top & \tau\cdot I
\end{array}\right).
\]
By Schur's complement, $A-\frac1\tau B B^\top\succeq 0$ and $\tau\geq 0$ are sufficient to guarantee $\Lambda(\nu, \tau, \alpha)\succeq 0$.
We now calculate Schur's complement.
For $i\in[0,\dots,N]$ and $j\in[0,\dots,N-1]$, we write
\[
B_{ij}=\begin{cases}
    0&\text{ if }i\le j+1,\\
    \frac{\alpha_j}{2(N-j)}\text{ else.}
\end{cases}
\]
For $i,j\in[0,\dots,N]$, we calculate
\[
(B B^\top)_{ij}=\sum_{k=0}^{N-1}B_{ik}B_{jk}=\sum_{k=0}^{\min\{i,j\}-1}B_{ik}B_{jk}=\frac14\sum_{k=0}^{\min\{i,j\}-1}\frac{\alpha_k^2}{(N-k)^2}.
\]
Let us define
\[
\mu_\ell=\frac1{4\tau}\sum_{k=0}^{\ell-1}\frac{\alpha_k^2}{(N-k)^2},
\]
so that $\frac1\tau(B B^\top)_{ij}=\mu_{\min\{i,j\}-1}$. 
Then Schur's complement is given by
\[
\left(A-\frac1\tau BB^\top\right)_{ij}=\begin{cases}
    \nu_*&\text{ if }i=j=0,\\
    \frac{-1}{2(N+1)}&\text{ if }i=0\vee j=0,\\
    \nu_{i-1}-\mu_{i-1}&\text{ if }i=j>0,\\
    \frac{\alpha_{\min\{i,j\}-1}}{2(N+1-\min\{i,j\})}-\mu_{\min\{i,j\}-1}&\text{ else.}
\end{cases}
\]

We set the dual variable $\nu$ to
\begin{align*}
  \nu_\star = & \frac{N}{2(N+1)^2 \tilde \alpha_0}\\
  \nu_0 = & \frac{\alpha_0}{2N}\\
  \nu_{k+1} = & \nu_{k} + \frac{\alpha_{k}^2}{4 (N-k)^2 \tau} \quad \forall k \in [0, \dots, N-1].
\end{align*}
With these variables, the diagonal entries are 
\[
\left(A-\frac1\tau BB^\top\right)_{ii}=\begin{cases}
    \frac{N}{2(N+1)^2 \alpha_0}&\text{ if }i=0,\\
    \frac{\alpha_0}{2N}&\text{ else.}
\end{cases}
\]
Again applying Schur, we can write
\[
A-\frac1\tau B B^\top=\left(\begin{array}{cc}
    \nu_* & \frac{1}{2(N+1)}\cdot \mathbf{1}^\top \\
    \frac{1}{2(N+1)}\cdot \mathbf{1} & C
\end{array}\right),
\]
where 
\[
C_{ij}=\begin{cases}
    \frac{\alpha_0}{2N}&\text{ if }i=j,\\
    \frac{\alpha_{\min\{i,j\}}}{2(N-\min\{i,j\})}-\mu_{\min\{i,j\}}&\text{ else.}
\end{cases}
\]
Then $A-\frac1\tau BB^\top$ is positive semidefinite iff
\[
C-\frac{\alpha_0}{2N}\mathbf{1}\cdot\mathbf{1}^\top
\]
is positive semidefinite. This matrix has a zero diagonal. The off-diagonal elements are given by
\[
\frac{\alpha_{\min\{i,j\}}}{2(N-\min\{i,j\})}-\mu_{\min\{i,j\}}-\frac{\alpha_0}{2N}.
\]
Suppose that
\[
  \frac{\alpha_{k+1}}{N-k-1} = \frac{\alpha_{k}}{N-k} + \frac{\alpha^2_{k}}{2 (N-k)^2 \tau} \quad \forall k\in [0, \dots, N-2]
\]
then the off-diagonal elements are zero. Indeed, for $\min\{i,j\}=0$ we have trivially,
\[
    \frac{\alpha_{0}}{2N}-\mu_{0}-\frac{\alpha_0}{2N}=0.
\]
Observe that for any $k$ we have
\begin{align*}
& \frac{\alpha_{k+1}}{2(N-k-1)}-\mu_{k+1}-\frac{\alpha_0}{2N}\\
= & \frac{\alpha_{k+1}}{2(N-k-1)}-\mu_{k}-\frac{\alpha_k^2}{4\tau(N-k)^2}-\frac{\alpha_0}{2N}\\
= & \frac{\alpha_{k}}{2(N-k)} + \frac{\alpha^2_{k}}{4 (N-k)^2 \tau} -\mu_{k}-\frac{\alpha_k^2}{4\tau(N-k)^2}-\frac{\alpha_0}{2N}\\
= & \frac{\alpha_{k}}{2(N-k)} -\mu_{k}-\frac{\alpha_0}{2N}
\end{align*}
and hence by induction on $\min\{i,j\}=0$ all off-diagonal elements are zero and hence the matrix is positive semidefinite.
Hence, an upper bound on the performance of the best subgradient method is given as
\[
  \begin{array}{rl}
    f_{N+1}-f^\star \leq \min  & \tau \gamma^2 +  \frac{NR^2}{2(N+1)^2 \alpha_0} + \sum_{k=0}^{N} \nu_k L^2 \\
    \st & \tau\geq 0, ~\nu\geq 0, ~\alpha\geq 0,\\
                               & \nu_ 0 = \frac{\alpha_0}{2N},\\
                               & \nu_{k+1} \geq \nu_{k} + \frac{\alpha_{k}^2}{4 (N-k)^2 \tau} \quad \forall k \in [0, \dots, N-1]\\
          & \frac{\alpha_{k+1}}{N-k-1} \geq \frac{\alpha_{k}}{N-k} + \frac{\alpha^2_{k}}{2 (N-k)^2 \tau} \quad \forall k\in [0, \dots, N-1].
  \end{array}
\]
After the change of variables $y_k=\tfrac{\alpha_k}{2(N-k)\tau}$ and letting $\nu_k=y_k/2$ for all $k\in [0, \dots, N-1]$ we get
\[
  \begin{array}{rl}
    f_{N+1}-f^\star \leq & \left\{
                           \begin{array}{rl}
                             \min  & \tau \gamma^2 +  \frac{R^2}{4(N+1)^2 y_0\tau} + \tau \sum_{k=0}^{N} y_k L^2 \\
                             \st & \tau\geq 0, ~y_0\geq 0,\\
                                   & y_{k+1} \geq y_{k} + y_{k}^2 \quad \forall k\in [0, \dots, N-1]
                           \end{array}\right.\\
    = \frac{RL}{\sqrt{N+1}}\cdot u^{\mathbb L}_N(\sigma) = &  \left\{
                           \begin{array}{rl}
                             \min  & \sqrt{\tfrac{R^2(\gamma^2 + \sum_{k=0}^{N} y_k L^2)}{((N+1)^2y_0)}}\\
                             \st & y_0\geq 0,\\
                                   & y_{k+1} \geq y_{k} + y_k^2 \quad \forall k\in [0, \dots, N-1]
                           \end{array}\right.                           
  \end{array}
\]
establishing the claim.
\end{appendixproof}

  \begin{figure}
      \centering
      \begin{tikzpicture}[spy using outlines={circle, magnification=2.5, size=4cm, connect spies}]
        \begin{axis}[
          scaled ticks=false,
          tick label style={/pgf/number format/fixed},
          legend style={
          cells={anchor=west},
        },
          grid=major,
          axis lines=left,
          clip=false,
          xlabel={$\theta$},
          ylabel={$\alpha_{\lfloor N \theta \rfloor}^{\mathbb L}(\sigma)$},
          xmin=0,
          xmax=1,
          ymin=0,
          ymax=1,
          ytick={0,0.2,0.4,0.6,0.8,1},
          yticklabels={{$0\cdot\frac{R}{L\sqrt{N+1}}$}, {$0.2\cdot\frac{R}{L\sqrt{N+1}}$}, {$0.4\cdot\frac{R}{L\sqrt{N+1}}$},{$0.6\cdot\frac{R}{L\sqrt{N+1}}$},{$0.8\cdot\frac{R}{L\sqrt{N+1}}$}, {$1\cdot\frac{R}{L\sqrt{N+1}}$}},
          x label style={at={(axis description cs:0.5,0.0)},anchor=north},
          y label style={at={(axis description cs:-.25,.5)},rotate=0,anchor=center, align=right},
          legend pos=outer north east,
          scale=1
          ]

          \addplot[draw=black, mark=none, thick] table [x=k, y=a, col sep=comma] {figures/step-sizes/step-sizes-0.csv};
          \addlegendentry{$\sigma=0$};

          \addplot +[draw=red, mark=none] table [x=k, y=a, col sep=comma] {figures/step-sizes/step-sizes-0.2.csv};
          \addlegendentry{$\sigma=\frac15$};

          \addplot +[draw=green, mark=none] table [x=k, y=a, col sep=comma] {figures/step-sizes/step-sizes-1.0.csv};
          \addlegendentry{$\sigma=1$};

          \addplot +[draw=violet, mark=none] table [x=k, y=a, col sep=comma] {figures/step-sizes/step-sizes-5.0.csv};
          \addlegendentry{$\sigma=5$};

          \addplot[forget plot, draw=violet, densely dotted, thick, domain=0:1, samples=100, variable=theta] {(1-theta)/(5.420362094992213*(1-(1-5.420362094992213^(-2))*theta))};

          \addplot +[draw=brown, mark=none] table [x=k, y=a, col sep=comma] {figures/step-sizes/step-sizes-10.0.csv};
          \addlegendentry{$\sigma=\sqrt{N}$};
      
        \end{axis}        
        \spy on (6.4,0.58) in node [left] at (13.5,1.8);
      \end{tikzpicture}
      \caption{The conic combination $\alpha^{\mathbb L}$ (reindexed in $\theta\in [0, 1)$) proposed in~\cref{prop:u-L-bound} for $N=100$ and various noise levels $\sigma.$ The dashed line corresponds to the step sizes $\alpha_k'$ from~\cref{lem:admissible} for $\sigma=5$.}
      \label{fig:steps}
    \end{figure}
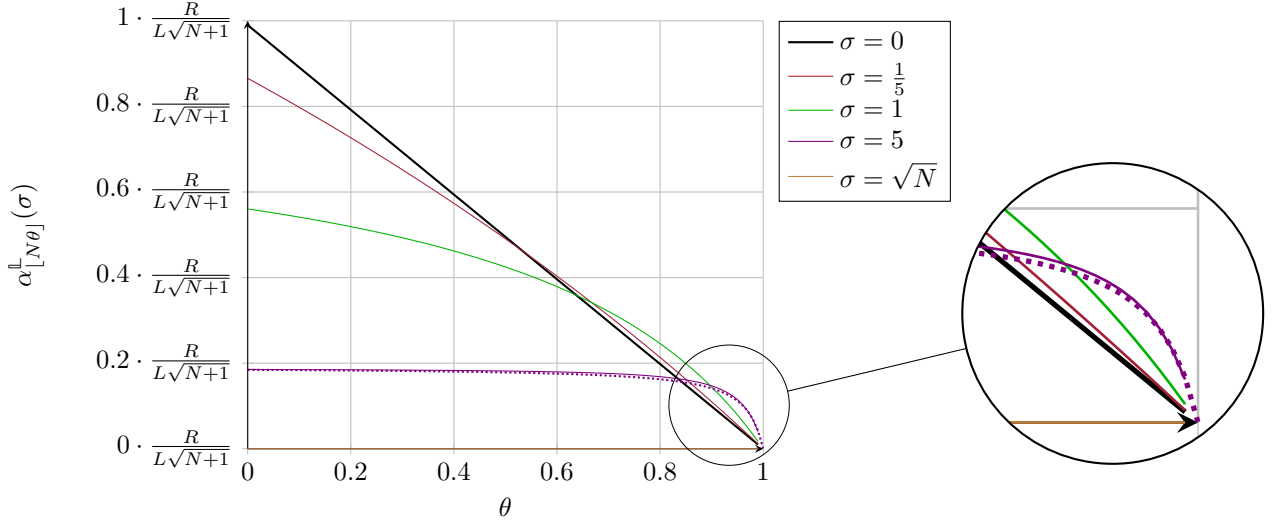

In~\cref{thm:optimal-bound-z} in the appendix, we further characterize the optimal $y_k^*$ from~\cref{prop:u-L-bound}, as well as the resulting performance guarantee. 
\begin{toappendix}
The objective function in the performance optimization problem~\eqref{eq:exact-convex-procedure} is nondecreasing in $y_1,\dots,y_{N-1}$. Therefore, an optimal $y$ can also be found which satisfy the recursion
\begin{equation}\label{eq:y-recursion}
  y_{k+1}=y_k+y_k^2.
\end{equation}
for all $k\in [0, \dots, N-1]$.
The sequence $y_k$ grows monotonously and once it exceeds $1$, its growth becomes doubly-exponential. This can be seen from $y_{k+1}> y_{k}^2$ which implies $y_{k}>y_{0}^{2^k}$, resulting in doubly-exponential growth for $y_{0}>1$. For $0<y_0<1$, we have $y_k\ge y_0+ky_0^2$, so that $y_k$ will eventually exceed $1$ and start its doubly-exponential growth. Finally, we denote with $S_N(y_0)=\sum_{k=0}^Ny_k$ the associated partial sum.

We introduce first the following lemma to study the asymptotics of the  recursion in Equation \eqref{eq:y-recursion}.
\begin{lemma}\label{lem:x-seq-sum}
    For a starting point $y_0\ge0$, we define a sequence $(y_k)_{k\in\mathbb N}$ satisfying recursion \eqref{eq:y-recursion}.
    We have
    \[
    y_0\cdot\sum_{r=0}^{2^N}(k)_ry_0^r\le y_k\le y_0\cdot\sum_{r=0}^{2^N}k^ry_0^r,
    \]
    where $(k)_r=k\cdot(k-1)\cdots(k-r+1)$ is the falling factorial.
    Moreover,
    \[
    \sum_{r=1}^\infty \frac{1}{r}(N+1)_ry_0^r \le S_N(y_0)\le \sum_{r=1}^{\infty} \frac{1}{r}(N+1)^ry_0^r.
    \]
\end{lemma}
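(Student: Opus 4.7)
The plan is to expand $y_k$ as a polynomial in $y_0$ of degree $2^k$ with non-negative coefficients. Writing $y_k = \sum_{r\ge1} a_{k,r} y_0^r$, the recursion $y_{k+1}=y_k+y_k^2$ yields the coefficient recursion
\[
a_{k+1,r} = a_{k,r} + \sum_{s=1}^{r-1} a_{k,s} a_{k,r-s},
\]
with $a_{0,1}=1$ and $a_{0,r}=0$ for $r\ge2$. I would prove by induction on $k$ the term-wise bounds $(k)_r \le a_{k,r+1} \le k^r$ for every $r\ge0$. Multiplying through by $y_0$ then gives the claimed sandwich for $y_k$: truncating the bounding sums at $r=2^N$ is harmless because $2^k\le 2^N$ for $k\le N$ (so every non-zero coefficient of $y_k$ is covered by the upper sum) and because $(k)_r=0$ for $r>k$ (so the lower-bound tail vanishes).

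For the upper induction step, plugging $a_{k,s}\le k^{s-1}$ into the recursion gives $a_{k+1,r} \le k^{r-1} + (r-1)k^{r-2}$, which is bounded above by $(k+1)^{r-1}$ by keeping only the first two terms of the binomial expansion. For the lower step, the Pascal-type identity $(k+1)_r = (k)_r + r(k)_{r-1}$ reduces the goal to
\[
\sum_{s=1}^{r-1} (k)_{s-1}(k)_{r-s-1} \ge (r-1)(k)_{r-2}.
\]
The key auxiliary estimate is the product inequality $(k)_a(k)_b \ge (k)_{a+b}$ for non-negative integers $k,a,b$, which follows from the factorization $(k)_{a+b}=(k)_a(k-a)_b$ together with $(k-a)_b\le(k)_b$ when $a+b\le k$, and from $(k)_{a+b}=0$ otherwise. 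Applying this to each summand with $a=s-1$, $b=r-s-1$ handles the regime $r\le k+2$ (the boundary case $r=k+2$ uses that each of the $k+1$ summands is $\ge (k)_k=k!$, so the sum is $\ge (k+1)! = (r-1)(k)_{r-2}$), while for $r\ge k+3$ the right-hand side of the sub-claim already vanishes.

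Finally, for $S_N(y_0)$ I would interchange summation to write $S_N(y_0) = \sum_{r\ge 1} y_0^r \sum_{k=0}^N a_{k,r}$, substitute the coefficient bounds, and apply the hockey-stick identity $\sum_{k=0}^N (k)_r = (N+1)_{r+1}/(r+1)$ for the lower bound and the integral estimate $\sum_{k=0}^N k^r \le (N+1)^{r+1}/(r+1)$ (from $k^r \le \int_k^{k+1} x^r\, dx$) for the upper bound, then reindex $r\to r-1$ to obtain the claimed form. The main technical obstacle is the product inequality on falling factorials and the resulting sub-claim at the boundary $r=k+2$; the upper bound on coefficients is a routine binomial estimate, and the passage to $S_N$ is a direct Fubini interchange combined with these standard sum identities.
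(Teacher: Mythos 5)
Your proof is correct and follows essentially the same route as the paper: expand $y_k=\sum_r a_{k,r}y_0^r$, derive the coefficient recursion $a_{k+1,r}=a_{k,r}+\sum_s a_{k,s}a_{k,r-s}$, establish the termwise bounds $(k)_{r-1}\le a_{k,r}\le k^{r-1}$ by induction, and sum with the hockey-stick identity. The only differences are cosmetic — you close the upper induction step in one line via the binomial expansion of $(k+1)^{r-1}$ where the paper telescopes over $k$ with an integral bound, and you bound $S_N$ from above by summing the coefficient bounds directly rather than passing through $y_k\le y_0/(1-ky_0)$ and the logarithm; also, your special treatment of the boundary case $r=k+2$ is unnecessary, since the product inequality $(k)_a(k)_b\ge(k)_{a+b}$ already covers it.
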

\begin{proof}
    We rewrite the recursion to
    \[
    \frac{y_{k+1}}{y_k}=1+y_k,
    \]
    from which we obtain the form
    \[
    y_{k+1}=y_0\prod_{i=0}^{k}(1+y_i).
    \]
    The first two values are given by
    \begin{align*}
        y_1&=y_0+y_0^2,\\
        y_2&=(y_0+y_0^2)\cdot(1+y_0+y_0^2)=y_0+2y_0^2+2y_0^3+y_0^4.
    \end{align*}
    In general, we see that
    \[
    y_k=\sum_{i=1}^{2^k}a_{k,i}y_0^i,
    \]
    for positive integer coefficients $a_{k,i}$, with $a_{k,1}=a_{k,2^k}=1$.
    From $y_{k+1}=y_k(1+y_k)$, we see that the coefficients satisfy the recursion
    \[
        a_{k+1,i}=a_{k,i}+\sum_{j=1}^{i-1}a_{k,j}a_{k,i-j}.
    \]
    For $i=2$, we see that
    \[
        a_{k+1,2}=a_{k,2}+a_{k,1}^2=a_{k,2}+1,
    \]
    which results in $a_{k,2}=k$. Similarly,
    \[
        a_{k+1,3}=a_{k,3}+2a_{k,1}a_{k,2}=a_{k,3}+2k,
    \]
    which results in $a_{k,3}=k(k-1)$.
    We will prove by induction that $a_{k,i}< k^{i-1}$ for $i\ge3$. 
    This holds for $i=3$ and $k\ge2$ since we have $a_{k,3}=k(k-1)<k^2$. Note that this trivially holds for $i>2^k$, since $a_{k,i}=0<k^{i-1}$.  We show that if the induction hypothesis holds for $i\ge3$, it will also hold for $i+1$:
    \begin{align*}
        a_{k+1,i+1}
        =a_{k,i+1}+\sum_{j=1}^{i}a_{k,j}a_{k,i+1-j}
        \le a_{k,i+1}+\sum_{j=1}^{i}k^{j-1}k^{i-j}
        =a_{k,i+1}+i\cdot k^{i-1}.
    \end{align*}
    Repeating this inequality $k-1$ times and applying an integral bound
    \begin{eqnarray*}
        a_{k+1,i+1}
        &\le a_{1,i+1}+i\cdot\sum_{j=1}^kj^{i-1}
        &\le 0+\int_1^{k+1} iz^{i-1}dz\\
        &=[z^i]_1^{k+1}
        &=(k+1)^i-1.
    \end{eqnarray*}
    Hence, for $ky_0<1$, 
    \[
    y_k\le y_0\cdot \sum_{j=0}^\infty(ky_0)^j=y_0\frac{1}{1-ky_0}.
    \]
    For $(N+1)y_0<1$, this leads to the upper bound
    \begin{eqnarray*}
    S_N(y_0)&\le\sum_{k=0}^Ny_0\frac{1}{1-ky_0}
    &<\int_{0}^{(N+1)}\frac{y_0dz}{1-zy_0}\\
    &=-\left[\log(1-zy_0)\right]_0^{(N+1)y_0}
    &=-\log(1-(N+1)y_0)\\
    &=\sum_{r=1}^\infty\frac1r(N+1)^ry_0^r.
    \end{eqnarray*}
    We will similarly prove by induction that $a_{k,i}\ge(k)_{i-1}$.
    It holds (with equality) up to $i=3$. Substituting this into the recursion, we obtain
    \begin{align*}
    a_{k'+1,i+1}-a_{k',i+1}=\sum_{j=1}^ia_{k',j}a_{k',i+1-j}
    \ge\sum_{j=1}^i(k')_{j-1}(k')_{i-j}
    \ge i\cdot (k')_{i-1}.
    \end{align*}
    Summing this inequality from $k'=i-1$ to $k'=k-1$, we obtain
    \begin{eqnarray*}
        a_{k,i+1}&=a_{k,i+1}-a_{1,i+1}&\ge i\sum_{k'=1}^{k-1}(k')_{i-1}\\
        &=i\sum_{k'=i-1}^{k-1}(k')_{i-1}
        &=i!\sum_{k'=i-1}^{k-1}{k'\choose i-1}\\
        &=i!{k\choose i}&=(k)_i.
    \end{eqnarray*}
    where the last step follows from the hockey-stick identity of binomial coefficients.
    Hence,
    \begin{eqnarray*}
    S_N(y)= \sum_{i=1}^{2^N}\sum_{k=0}^Na_{k,i}y^i
    \ge \sum_{i=1}^{2^N}y^i\sum_{k=0}^N(k)_{i-1}
    =\sum_{i=1}^{2^N}\frac{(N+1)_i}{i}y^i
    =\sum_{i=1}^\infty\frac{(N+1)_i}{i}y^i.
    \end{eqnarray*}
\end{proof}

The following theorem presents the optimal step sizes w.r.t. the bound proven in~\cref{prop:u-L-bound}:

\begin{theorem}\label{thm:optimal-bound-z}
Let $y_0$ be the solution of
\begin{equation}\label{eq:y0-implicit}
\sigma^2=y_0S_N'(y_0)-S_N(y_0),
\end{equation}
we have $u^{\mathbb L}_N(\sigma)=\sqrt{S_N'(y_0)}$. Let $y_k$ follow the recursion in Equation \eqref{eq:y-recursion} initialized at $y_0$. The step sizes
\begin{equation}\label{eq:alpha-opt}
\alpha_k^{\mathbb L}=\frac{R(N-k)}{L (N+1)^{3/2}}\frac{y_k}{u^{\mathbb L}_N(\sigma)y_0},
\end{equation}
enjoy the performance
\begin{equation}\label{eq:noisy-performance}
f_{N+1}-f_\star\le\frac{RL}{\sqrt{N+1}}u^{\mathbb L}_N(\sigma).
\end{equation}
\end{theorem}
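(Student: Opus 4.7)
The plan is to solve the convex program~\eqref{eq:exact-convex-procedure} in closed form by reducing it to a one-dimensional minimization over $y_0$, and then reading off the optimal step sizes from the formula in~\cref{prop:u-L-bound}.

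\textbf{Reducing constraints to equalities.}
First I would observe that for any fixed $y_0>0$, the objective $(\sigma^2+\sum_{k=0}^N y_k)/((N+1)y_0)$ is strictly increasing in each of $y_1,\dots,y_N$. Because the map $y\mapsto y+y^2$ is monotone on $\Re_+$, the coordinate-wise minimum of $(y_1,\dots,y_N)$ subject to $y_{k+1}\ge y_k+y_k^2$ is attained when every inequality holds with equality. Consequently, any optimal trajectory must satisfy the recursion~\eqref{eq:y-recursion}, and each $y_k$ as well as $S_N(y_0)=\sum_{k=0}^N y_k$ becomes a smooth, strictly increasing function of the single scalar $y_0$.

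\textbf{Scalar optimization and first-order condition.}
The remaining task is to minimize $\phi(y_0):=(\sigma^2+S_N(y_0))/((N+1)y_0)$ over $y_0>0$. For $\sigma>0$, $\phi(y_0)\to\infty$ as $y_0\to 0^+$ (since $\sigma^2/y_0$ dominates) and as $y_0\to\infty$ (since $S_N$ grows super-linearly through the recursion), so a global minimizer exists in the interior. Setting $\phi'(y_0)=0$ and clearing the denominator produces precisely the implicit equation~\eqref{eq:y0-implicit}. Substituting the identity $\sigma^2+S_N(y_0)=y_0 S_N'(y_0)$ back into $\phi$ then yields the stated closed form for $u_N^{\mathbb L}(\sigma)$.

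\textbf{Translating to step sizes.}
Plugging the optimal trajectory $(y_k^\star)_{k=0}^{N}$ into the formula $\alpha_k^{\mathbb L}=\tfrac{R}{L}\cdot\tfrac{N-k}{(N+1)^{3/2}}\cdot\tfrac{y_k^\star}{y_0^\star u_N^{\mathbb L}(\sigma)}$ from~\cref{prop:u-L-bound} reproduces~\eqref{eq:alpha-opt}, and the performance guarantee~\eqref{eq:noisy-performance} is immediate from that proposition.

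\textbf{Main obstacle.}
The only delicate point is showing that~\eqref{eq:y0-implicit} has a unique positive solution that is the \emph{global} minimizer of $\phi$, not merely a stationary point. This reduces to verifying that the map $y_0\mapsto y_0 S_N'(y_0)-S_N(y_0)$ is strictly increasing on $\Re_+$, vanishes at $y_0=0$, and diverges as $y_0\to\infty$. Differentiating gives $y_0 S_N''(y_0)$, which is strictly positive on $(0,\infty)$ because each iterate $y_k$ is a convex increasing function of $y_0$ (as an iterated composition of the convex increasing map $y\mapsto y+y^2$), and is \emph{strictly} convex for $k\ge 1$. The doubly-exponential growth of the recursion established in~\cref{lem:x-seq-sum} provides the blow-up at infinity, closing the argument.
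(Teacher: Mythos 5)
Your proposal is correct and follows essentially the same route as the paper's proof: reduce the inequality constraints of~\eqref{eq:exact-convex-procedure} to the equality recursion~\eqref{eq:y-recursion}, minimize the resulting one-dimensional function of $y_0$, read off the first-order condition~\eqref{eq:y0-implicit}, and substitute back into the step-size formula of~\cref{prop:u-L-bound}. Your closing paragraph on uniqueness and global optimality of the stationary point (via strict convexity of the iterated map $y\mapsto y+y^2$) is a worthwhile addition that the paper's proof leaves implicit.
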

\begin{proof}
    We want to minimize~\eqref{eq:exact-convex-procedure}.
    Since $y_k$ for $k>0$ is fully determined by $y_0$, we simply need to find the $y_0$ that minimizes
    \[
    \frac{\sigma^2+\sum_{k=0}^Ny_k}{(N+1)y_0}=\frac{\sigma^2+S_N(y_0)}{(N+1)y_0}.
    \]
    We take the derivative w.r.t. $y_0$ and obtain
    \[
    \frac{S_N'(y_0)}{(N+1)y_0}-\frac{\sigma^2+S_N(y_0)}{(N+1)y_0^2}=0,
    \]
    which can be rewritten to~\eqref{eq:y0-implicit}.
    The corresponding performance bound becomes
    \[
    \frac{RL}{\sqrt{N+1}}\sqrt{\frac{\sigma^2+S_N(y_0)}{(N+1)y_0}}.
    \]
    Substituting $S_N'(y_0)=\frac1{y_0}(\sigma^2+S_N(y_0))$, the above can be rewritten to~\eqref{eq:noisy-performance}.
    Similarly substituting $u_N^{\mathbb L}(\sigma)$ into the step sizes from~\cref{prop:u-L-bound} yields~\eqref{eq:alpha-opt}, which completes the proof.
  \end{proof}

\end{toappendix}
In \cref{sec:analysis}, we provide a simpler admissible step size schedule that is asymptotically equivalent to $\alpha_k^{\mathbb L}.$
Figure~\ref{fig:steps} shows the step size schedule from~\cref{prop:u-L-bound} for various $\sigma.$
We see that in the noiseless case, the step sizes coincide with the known optimal step sizes from~\cite{zamani2023exact}. Unsurprisingly, increased gradient corruption begets a less aggressive overall step size schedule. However, the proposed subgradient method is more cautious in earlier iterations than in later ones, where it is, in fact, more aggressive than in the absence of corruption. 

\subsection{Performance lower bounds}
\label{sec:univ-lower-bound}

Consider any algorithm which generates iterates that satisfy the following cone condition
\begin{equation}
  \label{eq:cone_condition}
  \begin{split}
    x_k = &~x_0 - \text{cone}(g_0+e_0, \dots, g_{k-1}+e_{k-1
            }) \quad \forall k \in [1, \dots, N],\\
    x_{N+1}= &~ x_0 - \text{cone}(g_0+e_0, \dots, g_{N-1}+e_{N-1}).
  \end{split}
\end{equation}
This class of algorithms includes any subgradient method with non-negative step sizes, the Nesterov accelerated gradient descent method and most practically relevant variable step size algorithms. Intuitively, it captures any algorithm which moves into the negative of the (noisy) subgradients observed up to that point.

We now propose a lower bound on the performance of any method which satisfies Equation \eqref{eq:cone_condition} by choosing the initial condition $x_0-x_\star$, subgradients $g_0, \dots, g_{N+1}$ and noise vectors $e_0, \dots, e_{N-1}$ adversarially. As is standard in performance estimation optimization \citep{Drori_Teboulle_2016, taylor_smooth_2017}, we will do so implicitly by considering its Grammian matrix. As the name suggests, this Grammian encodes all inner products between the variables of interest as entries in a symmetric positive semidefinite matrix $G$. For notational convenience, we will write 
\(
  G(x_0-x_\star, g_i) \defn \langle x_0-x_\star, g_i \rangle
\)
to denote the entry related to the inner product between the initial condition $x_0-x_\star$ and the gradient $g_i$.

\begin{theorem}\label{thm:lower-matrix}
    For any optimization algorithm which satisfies \eqref{eq:cone_condition} there is a function $f\in \mc F$ so that
    \begin{equation}
      \label{eq:universal_lb}
      f_{N+1}-f_\star\geq
      \left\{
        \begin{array}{rl}
          \max_{G\succeq 0, \Delta\geq 0}  & \Delta\\
          \st & G(g_j, g_i) + G(e_j, g_i)  = 0 \quad \forall (i, j) \in [0, \dots, N+1]\times [0, \dots, N-1]:  j< i \\
                                           & G(g_j, g_i) + G(e_j, g_i) \geq 0 \quad (i, j) \in [0, \dots, N+1]\times [0, \dots, N-1]:  j\geq i\\
                                           & G(x_0-x_\star, g_i)= \Delta\quad \forall i\in [0, \dots, N+1]\\
                                           & G(x_0-x_\star, x_0-x_\star) \leq R^2\\
                                           & G(g_i, g_i) \leq L^2\quad\forall i\in [0, \dots, N+1]\\
                                           & \sum_{i=0}^{N-1} G(e_i, e_i) \leq \gamma^2.
                                             
        \end{array}
      \right.
    \end{equation}
\end{theorem}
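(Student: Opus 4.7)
The strategy is a classical \emph{resisting-function} construction in the performance estimation tradition. Let $(G,\Delta)$ be any feasible pair for the semidefinite program in~\eqref{eq:universal_lb}. Since $G\succeq 0$, a Cholesky factorization yields vectors $u,g_0,\dots,g_{N+1},e_0,\dots,e_{N-1}$ in some Euclidean space whose Grammian coincides with $G$. Set $x_\star=0$ and $x_0=u$, so that the SDP constraints on $\|x_0-x_\star\|$, $\|g_i\|$, and $\sum_{k=0}^{N-1}\|e_k\|^2$ translate directly into the admissible problem data $R$, $L$, $\gamma$.

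Next, I construct the resisting function as the piecewise-linear convex envelope
\[
f(x) \;\defn\; \max\Bigl\{0,\ \max_{i=0,\dots,N+1}\langle g_i,\,x-x_\star\rangle\Bigr\}.
\]
By construction $f\in\mc F$: it is convex, the identity $f(x_\star)=0$ together with $f\geq 0$ certifies $x_\star$ as a minimizer (so $f_\star=0$), and every subgradient of $f$ is a convex combination of a subset of $\{0,g_0,\dots,g_{N+1}\}$, hence has norm at most $L$.

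The heart of the proof is to show that running the algorithm on $f$ with noise $e_0,\dots,e_{N-1}$ and a first-order oracle that returns $g_k\in\partial f(x_k)$ at the $k$-th query reproduces precisely the trajectory encoded by the SDP. By the cone condition~\eqref{eq:cone_condition}, any such iterate is of the form $x_k=x_0-\sum_{j=0}^{k-1}c_{k,j}(g_j+e_j)$ with $c_{k,j}\geq 0$ (the representation of $x_{N+1}$ only sums over $j\leq N-1$, which precisely matches the range of $j$'s appearing in the SDP constraints). For each $i\in[0,\dots,N+1]$,
\[
\langle g_i,\,x_k-x_\star\rangle \;=\; \Delta \;-\; \sum_{j=0}^{k-1} c_{k,j}\,\langle g_i,\,g_j+e_j\rangle,
\]
and the SDP constraints make the cross terms vanish for $j<i$ and nonnegative for $j\geq i$. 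Combined with $c_{k,j}\geq 0$, this gives $\langle g_i,x_k-x_\star\rangle=\Delta$ whenever $i\geq k$ and $\langle g_i,x_k-x_\star\rangle\leq \Delta$ whenever $i<k$. Consequently $f(x_k)=\Delta$, the vector $g_k$ attains the maximum (so $g_k\in\partial f(x_k)$ is a legitimate oracle response), and in particular $f(x_{N+1})-f_\star=\Delta$. Taking the supremum over feasible $(G,\Delta)$ yields the claimed bound.

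The main obstacle is this self-consistency argument: the equality constraints $\langle g_i,g_j+e_j\rangle=0$ for $j<i$ are tailored so that $g_i$ stays at the maximizing value $\Delta$ along the trajectory up to iteration $i$, while the inequality constraints for $j\geq i$ ensure that no earlier linear piece ever strictly exceeds $\Delta$ later on, which together deliver the chain $\langle g_i,x_k-x_\star\rangle\leq\Delta=\langle g_k,x_k-x_\star\rangle$ for all $i,k$. The nonnegativity of the coefficients $c_{k,j}$, guaranteed by the cone condition, is exactly what allows this sign information to pass through the summation. Every remaining SDP constraint then translates transparently into either a problem-class condition on $f$ or the total corruption budget.
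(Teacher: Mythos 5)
Your proposal is correct and follows essentially the same route as the paper: both feed the algorithm a resisting instance built from a Cholesky factorization of a feasible Grammian, and both use the sign constraints on $\langle g_j+e_j,g_i\rangle$ together with the nonnegativity of the cone coefficients to show every iterate has value $\Delta$ while $g_k$ remains a valid oracle response. The only cosmetic difference is that you write out the interpolating function $f(x)=\max\{0,\max_i\langle g_i,x-x_\star\rangle\}$ explicitly, whereas the paper invokes the standard convex-interpolation theorem and verifies the resulting finite system of subgradient inequalities.
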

\begin{appendixproof}[Proof of Proposition \ref{thm:lower-matrix}.]

  We remark that the Grammian must satisfy $\sum_{i=0}^{N-1} G(e_i, e_i) \defn \sum_{i=0}^{N-1} \langle e_i, e_i\rangle\leq \gamma^2$ as the power of the adversary is bounded. 
  
  Suppose now that we find a Grammian that additionally satisfies the conditions
  \begin{align}
    & G(g_j, g_i) + G(e_j, g_i) = \langle g_j +e_j, g_i \rangle = 0 \quad (i, j) \in [0, \dots, N+1]\times [0, \dots, N-1]:  j< i \label{eq:lb_cond_G_1}\\
    & G(g_j, g_i) + G(e_j, g_i) = \langle g_j +e_j, g_i \rangle \geq 0 \quad (i, j) \in [0, \dots, N+1]\times [0, \dots, N-1]:  j\geq i \label{eq:lb_cond_G_2}\\
    \intertext{and}
    & G(x_0-x_\star, g_i) = \langle x_0 - x_\star, g_i \rangle = \Delta\quad \forall i\in [0, \dots, N+1] \label{eq:lb_cond_G_3}
  \end{align}
  for some constant $\Delta\geq 0$. We claim that this implies that the suboptimality gap of any method satisfying Equation \eqref{eq:cone_condition} is at least $\Delta$. Additionally, we set $f^\star=0$ and $f_k=f(x_k)=\Delta$ for all $k\in [0, \dots, N+1]$. We need to verify the fact that the adversarially chosen initial condition $x_0-x_\star$, subgradients $g_0, \dots, g_{N+1}$ errors $e_0, \dots, e_{N-1}$ and associated function values $f_0, \dots, f_{N+1}$ are indeed compatible with the considered function class $\mc F$, i.e., the condition 
  \begin{equation}
    \label{eq:interpolation}
    f\in \mc F, \quad f_k=f(x_k) \quad {\rm{and}}\quad g_k\in \partial f(x_k) \qquad \forall k\in [\star, 0, \dots, N+1]
  \end{equation}
  holds.
  A well known result \citep{boyd2004convex, Drori_Teboulle_2016} is that this infinite dimensional interpolation condition can be reduced to a finite system of subgradient inequalities
  \[
    \eqref{eq:interpolation}\iff
    \left\{
      \begin{array}{l@{\quad}l}
        \multicolumn{2}{l}{g_\star=0, ~\langle x_0-x_\star, x_0-x_\star\rangle\leq R^2,} \\
        \langle g_k, g_k\rangle \leq L^2 & \forall k \in [0, \dots, N+1], \\
        
        f_j \geq f_i + \langle g_i, x_j -x_i\rangle & \forall i, j \in [\star, 0, \dots, N+1].
      \end{array}
    \right.
  \]
  It remains now to verify these conditions as the claim follows immediately from optimizing over all $G$ satisfying the stated conditions.
  
  First, we find that for all $i\in [0, \dots, N+1]$ we have
  \begin{align*}
    &f_i-f_\star \geq \langle x_i - x_\star, g_i \rangle  \\
    \iff &\Delta \geq \langle x_0 - x_\star, g_i \rangle -  \langle \text{cone}(g_0+e_0, \dots, g_{i-1 \wedge N-1}+e_{i-1\wedge N-1
           }), g_i\rangle=\Delta.
  \end{align*}
  Here the first equivalence follows from condition (\ref{eq:cone_condition}) and $f_i=\Delta$ for all $i$ and $f^\star=0$. The last equality follows~\eqref{eq:lb_cond_G_1} and~\eqref{eq:lb_cond_G_3}.
  Second, we verify convexity by
  \begin{align*}
    & f_j\geq f_i + \langle g_i, x_j-x_i\rangle\\
    \iff & 0\geq \langle g_i, x_j-x_i\rangle\\
    \iff & 0\geq -\langle g_i, \text{cone}(g_0+e_0, \dots, g_{j-1\wedge N-1}+e_{j-1\wedge N-1
           }) + \langle g_i, \text{cone}(g_0+e_0, \dots, g_{i-1\wedge N-1}+e_{i-1\wedge N-1
           })\rangle\\
    \impliedby & 0\geq -\langle g_i, \text{cone}(\{ g_k+e_k : \forall k \in [i, \dots, j-1\wedge N-1]\}) \rangle\geq 0
  \end{align*}
  for all $i\in [0,\dots, N+1]$ and $j$ in $[0, \dots, N+1]$. The first equivalence follows from our choice $f_k=\Delta$ for all $k\in [0, \dots, N+1]$. The second equivalence follows from the condition (\ref{eq:cone_condition}). The third equivalence is a result of conditions \eqref{eq:lb_cond_G_1} and \eqref{eq:lb_cond_G_2}.
\end{appendixproof}

The previous result gives a lower bound on the performance of any algorithm satisfying Equation (\ref{eq:cone_condition}) in the form of a tractable convex semidefinite optimization problem. The following results will make the discussed lower bound much more explicit. Let $\nu\in [0, 1]$ be the unique solution of
\begin{equation}
  \label{def:nu}
  \sum_{k=0}^{N-1}  \frac{(N-k)\nu^2}{1+(N-(k+1))\nu} = \sigma^2,
\end{equation}
and introduce the increasing sequence
$$\gamma^2_k \defn L^2 \frac{(N-k)\nu^2}{1+(N-(k+1))\nu}\in [0, L^2)$$ for all $k\in [0, \dots, N-1]$.
The following result gives a lower bound on the performance by considering only an adversaries which corrupts the subgradients by allocating their budget as $\norm{e_k}^2 = \gamma_k^2$ for all $k\in [0, \dots, N-1]$.

\begin{corollary}\label{cor:lower}
  Any optimization algorithm which satisfies \eqref{eq:cone_condition} has the performance lower bound
  \[
    f_{N+1}-f_\star\geq \frac{RL}{\sqrt{N+1}}\cdot \ell_N(\sigma),
  \]
  where $\ell_N(\sigma) = \sqrt{1+N \nu}$ with $\nu\in[0,1]$ the unique solution of Equation~\eqref{def:nu}. 
\end{corollary}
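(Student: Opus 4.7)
The plan is to invoke Theorem~\ref{thm:lower-matrix} and exhibit an explicit feasible solution to the semidefinite program~\eqref{eq:universal_lb} whose objective equals $\Delta = \tfrac{RL}{\sqrt{N+1}}\sqrt{1+N\nu}$. Since the SDP is phrased entirely through inner products, it suffices to construct concrete vectors in a Euclidean space and take $G$ to be the induced Grammian; the latter is then automatically positive semidefinite.

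The ansatz I would use is to pick orthonormal vectors $u_0,\dots,u_N,w$ and set
\[
  g_i = \sqrt{1-\nu}\,L\,u_i + \sqrt{\nu}\,L\,w \quad\text{for } i=0,\dots,N, \qquad g_{N+1}=g_N,
\]
so that $\|g_i\|^2=L^2$ and $\langle g_i,g_j\rangle = \nu L^2$ for $i\neq j$ in $\{0,\dots,N\}$. Taking $x_0-x_\star = \beta\sum_{i=0}^N g_i$ and fixing $\beta$ so that $\|x_0-x_\star\|^2 = R^2$, a short computation using $\|\sum_{i=0}^N g_i\|^2 = L^2(N+1)(1+N\nu)$ yields $\langle x_0-x_\star, g_i\rangle = \Delta$ for every $i\in[0,\dots,N+1]$. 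For the error vectors I look in the span of the ``future'' basis vectors,
\[
  e_j = a_j \sum_{i=j+1}^N u_i + b_j w, \qquad j=0,\dots,N-1,
\]
so that all $N-j$ equality constraints $\langle g_j+e_j, g_i\rangle = 0$ for $i>j$ collapse to the single linear equation $\sqrt{1-\nu}\,a_j + \sqrt{\nu}\,b_j = -\nu L$. Minimizing the quadratic $\|e_j\|^2 = (N-j)a_j^2 + b_j^2$ under this one linear constraint by a two-variable Lagrange multiplier gives $\|e_j\|^2 = \tfrac{(N-j)\nu^2L^2}{1+(N-j-1)\nu} = \gamma_j^2$.

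Summing over $j$ and appealing to the defining equation~\eqref{def:nu} of $\nu$ shows the total corruption is exactly $\gamma^2$, so the budget constraint in~\eqref{eq:universal_lb} holds with equality. The remaining inequality constraints $\langle g_j+e_j, g_i\rangle \geq 0$ for $j\geq i$ are verified by direct substitution and factor cleanly as $\nu L^2(1-\nu)/(1+(N-j-1)\nu)$ for $i<j$ and $L^2(1-\nu)(1+(N-j)\nu)/(1+(N-j-1)\nu)$ for $i=j$, both visibly nonnegative on $\nu\in[0,1]$. The main obstacle is identifying the right ansatz: the insight that all gradients share a common offset of magnitude $\sqrt{\nu}L$ along an auxiliary direction $w$ is what makes the minimum-norm problem defining each $e_j$ admit the closed form $\gamma_j^2$; once this structure is in place, the definition~\eqref{def:nu} of $\nu$ is precisely what is required to saturate the adversary's budget, yielding the claimed $\Delta=\tfrac{RL}{\sqrt{N+1}}\ell_N(\sigma)$.
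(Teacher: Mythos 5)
Your proposal is correct, and it reaches exactly the same feasible point of the semidefinite program \eqref{eq:universal_lb} as the paper does: one can check that the Gram matrix of your vectors reproduces the paper's candidate Grammian entry for entry ($\langle g_i,g_j\rangle=\nu L^2$ off-diagonal, $\langle e_j,g_i\rangle=-\nu L^2$ for $i>j$ and $-\gamma_j^2$ for $i\le j$, $\langle e_i,e_j\rangle=\gamma_{\max\{i,j\}}^2$), with $\Delta=F\nu L^2$. The difference is in how feasibility is certified. The paper works with the abstract matrix and must prove $G\succeq 0$ by hand, which costs two Schur-complement reductions and the auxiliary identities in Lemmas~\ref{lemma:psd}, \ref{lemma:inverse}, \ref{eq:gamma-identity} and \ref{eq:gamma-identity-2}; the value of $\Delta$ then emerges from forcing the final $2\times 2$ Schur complement to be singular. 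Your explicit realization $g_i=\sqrt{1-\nu}\,L\,u_i+\sqrt{\nu}\,L\,w$, $x_0-x_\star=\beta\sum_i g_i$, $e_j=a_j\sum_{i>j}u_i+b_jw$ makes positive semidefiniteness automatic, reduces each error vector to a one-constraint least-norm problem whose optimal value is precisely $\gamma_j^2$, and obtains $\Delta=RL\sqrt{1+N\nu}/\sqrt{N+1}$ from the normalization $\|x_0-x_\star\|=R$ via $\|\sum_i g_i\|^2=L^2(N+1)(1+N\nu)$. This is a more elementary and arguably more illuminating verification (it also explains \emph{why} \eqref{def:nu} is the right calibration: it saturates the adversary's budget under minimum-norm corruptions), at the mild cost of committing to a specific vector representation rather than arguing at the level of the Grammian. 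All the boundary cases are handled correctly in your argument, including $i=N+1$ via $g_{N+1}=g_N$ and the sign checks of the inequality constraints on $\nu\in[0,1]$.
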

\begin{appendixproof}[Proof of~\cref{cor:lower}]

  We consider candidate Grammian matrices $G=(A , B^\top ; B ,C)$ of the form
  \[
    \left(\begin{array}{ccccccc|ccccc}
      R^2 & \star  & \star  & \star  & \dots & \star  & \star & \star & \star & \star & \dots & \star  \\
      F \nu L^2  & L^2 & \star & \star  & \dots & \star & \star & \star & \star & \star& \dots & \star  \\
      F \nu L^2 & \nu L^2 & L^2 & \star & \dots & \star & \star & \star & \star & \star & \dots & \star   \\
      F \nu L^2 & \nu L^ 2 & \nu L^2 & L^2 &\dots  &  \star &\star  & \star & \star & \star & \dots & \star \\
      \vdots & \vdots & \vdots & \vdots &  \ddots  & \vdots & \vdots &\vdots &\vdots & \vdots & \ddots & \vdots \\
      F \nu L^2 & \nu L^ 2 & \nu L^ 2 & \nu L^ 2 & \dots & L^2 & L^2 & \star & \star & \star & \dots & \star \\
      F \nu L^2 & \nu L^ 2 & \nu L^ 2 & \nu L^ 2 & \dots & L^2 & L^2 & \star & \star & \star & \dots & \star \\
      \hline
      - F\gamma_{0}^2  & - \gamma_{0}^2  & - \nu L^ 2 & - \nu L^ 2 & \dots & - \nu L^ 2 & - \nu L^ 2 & \gamma_{0}^2  & \star & \star  & \dots & \star \\
      - F\gamma_{1}^2  &  - \gamma_{1}^2  & - \gamma_{1}^2  & - \nu L^ 2& \dots  & - \nu L^ 2 & - \nu L^ 2 & \gamma_{1}^2  & \gamma_{1}^2  & \star & \dots & \star \\
      -F\gamma_{2}^2 & -\gamma_{2}^2  & -\gamma_{2}^2  & -\gamma_{2}^2 & \dots & - \nu L^2 & - \nu L^2  &  \gamma_{2}^2    & \gamma_{2}^2  & \gamma_{2}^2  & \dots & \star \\
      \vdots & \vdots & \vdots & \vdots & \ddots  & \vdots  & \vdots &\vdots &  \vdots & \vdots & \ddots & \vdots \\
      - F\gamma_{N-1}^2 L^2 & -\gamma_{N-1}^2 & -\gamma_{N-1}^2 & -\gamma_{N-1}^2 & \dots & -\nu L^2 & -\nu L^2 & \gamma_{N-1}^2 & \gamma_{N - 1}^2 & \gamma_{N-1}^2 & \dots & \gamma_{N - 1}^2  \\
    \end{array}\right)
\]
for $F\geq 0$ where here [$\star$] indicate symmetric entries which are omitted for the sake of brevity.

We now check whether the matrix $G$ is indeed feasible in Equation (\ref{eq:universal_lb}) for some $\Delta\geq 0$. Observe first that
\begin{equation*}
G(g_j, g_i) + G(e_j, g_i)  = \nu L^2- \nu L^2=0 \quad \forall (i, j) \in [0, \dots, N+1]\times [0, \dots, N-1]:  j< i
\end{equation*}
and
\begin{equation*}
  G(g_j, g_i) + G(e_j, g_i) = L^2 - \gamma_j^2 \geq 0 \quad \forall (i, j) \in [0, \dots, N+1]\times [0, \dots, N-1]:  j\geq i.\\
\end{equation*}
Second, we clearly have
\begin{equation*}
 G(x_0-x_\star, g_i)= F \nu L^2 =: \Delta \geq 0 \quad \forall i\in [0, \dots, N+1]\\
\end{equation*}
and $G(x_0-x_\star, x_0-x_\star)= R^2$, $G(g_i, g_i) = L^2$ for all $i\in [0, \dots, N]$ and $\sum_{i=0}^{N-1} G(e_i, e_i) = \sum_{i=0}^N \gamma^2_i = \gamma^2$ from Equation (\ref{def:nu}).

It finally remains to verify that the candidate Grammian $G$ is indeed positive semidefinite. From Schur's complement it suffices to verify that $C\succ 0$ and $A-B^\top C^{-1} B\succeq 0$. We establish that $C$ is positive definite in Lemma \ref{lemma:psd}. From Lemma \ref{lemma:inverse} it follows immediately that $S = A-B^\top C^{-1} B$ is identically zero outside a $2\times2$ block in the top left corner where it takes on the values
\[
  \begin{pmatrix}
    S_{11} & \star \\
    S_{2, 1} & S_{2,2}
  \end{pmatrix}
  =
  \begin{pmatrix}
    R^2-F^2\gamma_0^2 & \star \\
    F\nu L^2-F\gamma_0 & L^2-\gamma_0^2
  \end{pmatrix}.
\]
Hence, as we have here that $\gamma_0^2<L^2$ it follows that for $S\succeq 0$ it suffices to have $S_{11} = S_{2,1}^2/S_{2,2}$. This leads to
\begin{align*}
  \iff & \left(R^2-\frac{F^2 N \nu^2 L^2}{1+(N-1)\nu}\right)\left(L^2-\frac{N \nu^2 L^2}{1+(N-1)\nu}\right)=F^2 \left( \nu L^2-\frac{N \nu^2 L^2}{1+(N-1)\nu}\right)^2\\
  \iff & \left(R^2-\frac{F^2 N \nu^2 L^2}{1+(N-1)\nu}\right)\left(1-\frac{N \nu^2}{1+(N-1)\nu}\right)=F^2 L^2 \left( \nu -\frac{N \nu^2}{1+(N-1)\nu}\right)^2\\
  \iff &  R^2 \left(1-\frac{N \nu^2}{1+(N-1)\nu}\right)=F^2 L^2 \left(  \left( \nu -\frac{N \nu^2}{1+(N-1)\nu}\right)^2 + \frac{N \nu^2 }{1+(N-1)\nu} \left(1-\frac{N \nu^2}{1+(N-1)\nu}\right) \right)\\
  \iff & R^2 L^2 \left(1-\frac{N \nu^2}{1+(N-1)\nu}\right)=F^2 L^4 \nu^2 \left( 1-\frac{2\nu N}{1+(N-1)\nu}+\frac{N}{1+(N-1)\nu} \right)\\
  \iff & R^2 L^2 \left(1+(N-1)\nu-N \nu^2\right)=F^2 L^4 \nu^2 \left( 1+(N-1)\nu-2\nu N+N \right)\\
  \iff & F^2 L^4 \nu^2 = \frac{R^2L^2 \left(1+(N-1)\nu-N \nu^2\right)}{N+1-\nu(N+1)}\\
  \iff & \Delta = F L^2 \nu = RL \frac{\sqrt{1+(N-1)\nu-N \nu^2}}{\sqrt{N+1-\nu(N+1)}}=RL\frac{\sqrt{1+N\nu}}{\sqrt{N+1}}
\end{align*}
from which the claim follows.
\end{appendixproof}

\begin{toappendix}
\begin{lemma}
  \label{lemma:psd}
  Let $c_0>c_1>\dots>c_{N-1}>0$ for any $N\geq 1$. Then
  \[
   \left(\begin{array}{ccccc}
       c_{0}^2  & \star & \star  & \dots & \star \\
       c_{1}^2 & c_{1}^2   & \star & \dots & \star \\
        c_{2}^2     & c_{2}^2   & c_{2}^2   & \dots & \star \\
       \vdots &  \vdots & \vdots & \ddots & \vdots \\
       c_{N-1}^2   & c_{N-1}^2   & c_{N-1}^2   & \dots & c_{N-1}^2    \\
   \end{array}\right)\succ 0.
 \]
\end{lemma}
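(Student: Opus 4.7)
The plan is to write the matrix in question as a \emph{Gram-type} sum of rank-one matrices with positive weights and linearly independent generators, which immediately yields positive definiteness.

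First, I would encode the entries using a telescoping decomposition. Let $M$ denote the matrix in the lemma, so that $M_{ij} = c_{\max(i,j)}^2$ for $i,j\in[0,\dots,N-1]$. Setting
\[
d_k = c_k^2 - c_{k+1}^2 \quad \forall k\in[0,\dots,N-2], \qquad d_{N-1} = c_{N-1}^2,
\]
the strict inequalities $c_0>c_1>\dots>c_{N-1}>0$ give $d_k>0$ for every $k$. Telescoping yields $c_m^2 = \sum_{k=m}^{N-1} d_k$ for all $m\in[0,\dots,N-1]$.

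Next, I would substitute this into the entries of $M$ and rewrite the double indicator as a product:
\[
M_{ij} = c_{\max(i,j)}^2 = \sum_{k=\max(i,j)}^{N-1} d_k = \sum_{k=0}^{N-1} d_k\,\mathbf{1}[i\le k]\,\mathbf{1}[j\le k].
\]
Introducing the indicator vectors $u_k\in\Re^{N}$ defined by $(u_k)_\ell = \mathbf{1}[\ell\le k]$ (i.e., $u_k=(1,\dots,1,0,\dots,0)^\top$ with $k+1$ leading ones), this reads
\[
M = \sum_{k=0}^{N-1} d_k\, u_k u_k^\top = U D U^\top,
\]
where $U=[u_0\,|\,u_1\,|\cdots|\,u_{N-1}]$ and $D=\operatorname{diag}(d_0,\dots,d_{N-1})$.

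Finally I would conclude. The matrix $U$ is upper triangular with unit diagonal, hence invertible; and $D$ is positive definite since every $d_k>0$. Therefore $M=UDU^\top$ is congruent to a positive definite matrix, and is itself positive definite. The only part requiring attention is verifying that the indexing in the telescoping decomposition matches the entry pattern of $M$ (including the boundary term $d_{N-1}=c_{N-1}^2$ that captures the strict positivity $c_{N-1}>0$); once this is in place, there is no further obstacle.
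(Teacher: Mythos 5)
Your proof is correct, but it takes a genuinely different route from the paper. The paper argues by induction on $N$ via a Schur complement: partitioning off the last row and column, it computes $A^{-1}B$ explicitly and reduces positive definiteness to the scalar condition $c_k^2 - c_k^4/c_{k-1}^2 > 0$, i.e.\ $c_{k-1}^2 > c_k^2$ (note the paper's displayed equivalence has the inequality reversed; your argument sidesteps that slip entirely). You instead exhibit the explicit factorization $M = UDU^\top$ with $d_k = c_k^2 - c_{k+1}^2 > 0$, $d_{N-1} = c_{N-1}^2 > 0$, and $U$ unit upper triangular --- essentially the $LDL^\top$/Cholesky factorization of this ``max-indexed'' matrix written in closed form. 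Your telescoping identity $M_{ij} = \sum_{k \ge \max(i,j)} d_k$ checks out, $U$ is invertible, and congruence with $D \succ 0$ gives $M \succ 0$. The trade-off: the paper's induction is shorter to state but hides the structure inside a recursive Schur step; your decomposition is non-inductive, makes the role of the strict monotonicity and of $c_{N-1}>0$ completely transparent, and would generalize immediately (e.g.\ to the positive semidefinite case when the inequalities are weak). Both proofs are sound.
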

\begin{proof}
  We will show this result by induction.
  Clearly the result holds for $N=1$. Suppose now the results hold for all $N=k$. Then, consider the $N=k+1$ and partition the matrix of interest as follows
  \[
    \begin{pmatrix}
      A & B\\
      B^\top & C
    \end{pmatrix}
    =
    \left(\begin{array}{ccccc|c}
      c_{0}^2  & \star & \star  & \dots & \star & \star \\
      c_{1}^2 & c_{1}^2   & \star & \dots & \star & \star\\
      c_{2}^2     & c_{2}^2   & c_{2}^2   & \dots & \star & \star \\
      \vdots &  \vdots & \vdots & \ddots & \vdots & \star\\
      c_{k-1}^2   & c_{k-1}^2   & c_{k-1}^2   & \dots & c_{k-1}^2  & \star  \\
      \hline
      c_{k}^2   & c_{k}^2   & c_{k}^2   & \dots & c^2_{k} & c_{k}^2    \\
    \end{array}\right).
\]
From the induction hypothesis we know that $A\succ 0$. From Schur's complement it now suffices that
\[
  S = C-B^\top \left(A^{-1} B\right) =c_k^2- B^\top \begin{pmatrix} 0 \\ \vdots \\ 0 \\ \frac{c_k^2}{c^2_{k-1}} \end{pmatrix}  = c_k^2 - c_k^4/c_{k-1}^2>0\iff c^2_{k}>c^2_{k-1}
\]
to claim that indeed the result also holds for $N=k+1$. 
The expression for $A^{-1}B$ above is obtained by solving $Ab=B$.
\end{proof}

\begin{lemma}
  \label{lemma:inverse}
  Consider $B,C$ as given in~\cref{cor:lower}.
  We have
  \[
    B C^{-1} B =
    \left(\begin{array}{ccccccc}
      F^2\gamma_0^2 & \star  & \star  & \star  & \dots & \star  & \star  \\
      F\gamma_0^2  & \gamma_0^2 & \star & \star  & \dots & \star & \star   \\
      F \nu L^2 & \nu L^2 & L^2 & \star & \dots & \star & \star    \\
      F \nu L^2 & \nu L^ 2 & \nu L^2 & L^2 &\dots  &  \star &\star   \\
      \vdots & \vdots & \vdots & \vdots &  \ddots  & \vdots & \vdots  \\
      F \nu L^2 & \nu L^ 2 & \nu L^ 2 & \nu L^ 2 & \dots & L^2 & L^2  \\
      F \nu L^2 & \nu L^ 2 & \nu L^ 2 & \nu L^ 2 & \dots & L^2 & L^2  
         \end{array}
        \right).
  \]
\end{lemma}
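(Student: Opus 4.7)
My plan is to compute $C^{-1}$ applied to each column of $B$ explicitly, after which evaluating the entries of $B^\top C^{-1} B$ (the matrix displayed in the lemma, whose $(N{+}3)\times(N{+}3)$ dimensions match those of the Grammian block $A$) reduces to a short case analysis. The key observation is that $C$ admits the factorisation $C = UDU^\top$, where $U$ is the upper-triangular all-ones matrix ($U_{i\ell} = \mathbf{1}_{\ell \ge i}$) and $D = \mathrm{diag}(d_0,\dots,d_{N-1})$ with $d_\ell := \gamma_\ell^2 - \gamma_{\ell+1}^2$ (setting $\gamma_N := 0$). This follows from $C_{ij} = \gamma_{\max(i,j)}^2 = \sum_{\ell\ge \max(i,j)} d_\ell$ together with the monotonicity of $\gamma_k$ already exploited in~\cref{lemma:psd}. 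Since $U^{-1}$ is the bidiagonal forward-difference matrix, $C^{-1} = U^{-\top} D^{-1} U^{-1}$ is tridiagonal, and for any vector $v$ one has $(C^{-1}v)_k = -v_{k-1}/d_{k-1} + (d_k^{-1} + d_{k-1}^{-1})v_k - v_{k+1}/d_k$ with the natural boundary conventions $v_{-1} = v_N = 0$ and $d_{-1}^{-1} = 0$.

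The columns of $B$ are piecewise constant: for the column $b_i$ corresponding to gradient $g_i$ with $0\le i\le N-1$, $(b_i)_k = -\nu L^2$ if $k < i$ and $(b_i)_k = -\gamma_k^2$ if $k\ge i$; the column $b_\star$ corresponding to $x_0 - x_\star$ satisfies $(b_\star)_k = -F\gamma_k^2$; and for $i \ge N$, $b_i = -\nu L^2 \mathbf{1}$. Substituting into the tridiagonal formula, $(C^{-1} b_i)_k$ can be nonzero only at those indices $k$ where $b_i$ has a jump. Concretely, for $1\le i\le N-1$ one obtains $C^{-1} b_i = \alpha_i e_{i-1} + \beta_i e_i$ with $\alpha_i = (\gamma_i^2 - \nu L^2)/d_{i-1}$ and $\beta_i = -1 - \alpha_i$; the boundary cases give $C^{-1} b_0 = -e_0$, $C^{-1} b_\star = -F e_0$ (using $C^{-1}\gamma^2 = e_0$), and $C^{-1} \mathbf{1} = e_{N-1}/\gamma_{N-1}^2$, so that $C^{-1} b_i$ is a scalar multiple of $e_{N-1}$ for $i \ge N$.

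The conclusion then proceeds entry-by-entry: $(B^\top C^{-1} B)_{j,i} = \alpha_i(b_j)_{i-1} + \beta_i(b_j)_i$ for $1\le i\le N-1$, and I would split on whether $j < i$, $j = i$, $j > i$, or $j \in \{\star, 0\}$; the cases $i \in \{\star, 0, N, N+1\}$ use the simpler expressions above. Each case collapses to one of two algebraic identities: $\alpha_i + \beta_i = -1$, immediate from the definition of $\beta_i$, and $\alpha_i\gamma_{i-1}^2 + \beta_i\gamma_i^2 = -\nu L^2$, which I would verify by substituting the explicit form $\gamma_k^2 = L^2(N-k)\nu^2/(1+(N-k-1)\nu)$; the same substitution yields the closed forms $\alpha_i = -(1/\nu + N-i)$ and $\beta_i = (1 + (N-i-1)\nu)/\nu$. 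The main technical step is this second identity: it would fail for a generic decreasing sequence $\gamma_k$ and hinges on the specific rational form that emerged from the definition~\eqref{def:nu} of $\nu$. Once both identities are in hand, each entry of the claimed matrix matches by inspection.
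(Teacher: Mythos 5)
Your route to $C^{-1}B$ is genuinely different from the paper's and arguably more illuminating: the paper simply writes down a candidate matrix $M$ and verifies $CM=B$ column by column (this is where its Lemma~\ref{eq:gamma-identity} enters), whereas you derive the same $M$ constructively from the factorisation $C=UDU^{\top}$ with $d_{\ell}=\gamma_{\ell}^2-\gamma_{\ell+1}^2$, which explains \emph{why} $C^{-1}b_i$ is supported on only two coordinates rather than asking the reader to accept a guessed inverse. The factorisation is valid (it is exactly the monotonicity used in Lemma~\ref{lemma:psd}), your tridiagonal formula for $C^{-1}$ is correct, and the coefficients $\alpha_i=-(1/\nu+N-i)$ and $\beta_i=(1+(N-i-1)\nu)/\nu$ you obtain are precisely the nonzero entries of the matrix the paper multiplies $C$ against. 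The two proofs therefore meet at the same intermediate object, and the concluding entry-by-entry evaluation of $B^{\top}(C^{-1}B)$ is essentially identical in both.

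There is one slip in your final bookkeeping. You claim every entry reduces to the two identities $\alpha_i+\beta_i=-1$ and $\alpha_i\gamma_{i-1}^2+\beta_i\gamma_i^2=-\nu L^2$, and that the latter is the technically delicate one. In fact the latter is automatic for \emph{any} sequence $\gamma_k$: with $\beta_i=-1-\alpha_i$ one has $\alpha_i\gamma_{i-1}^2+\beta_i\gamma_i^2=\alpha_i d_{i-1}-\gamma_i^2=(\gamma_i^2-\nu L^2)-\gamma_i^2=-\nu L^2$, so it does not hinge on the rational form of $\gamma_k$. The case your enumeration misses is the diagonal one, $j=i$ with $1\le i\le N-1$, where $(b_i)_{i-1}=-\nu L^2$ and $(b_i)_i=-\gamma_i^2$, so the entry equals $-\nu L^2\alpha_i-\gamma_i^2\beta_i=\tfrac{(\gamma_i^2-\nu L^2)^2}{d_{i-1}}+\gamma_i^2$; showing this equals $L^2$ amounts to $(\gamma_i^2-\nu L^2)^2=(L^2-\gamma_i^2)\,d_{i-1}$, and \emph{this} is the identity that genuinely uses $\gamma_k^2=L^2(N-k)\nu^2/(1+(N-k-1)\nu)$ — it is the paper's Lemma~\ref{eq:gamma-identity-2} in disguise. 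Given your closed forms for $\alpha_i$ and $\beta_i$ the verification is one line, so the gap is easily filled, but as written your argument does not establish the diagonal entries $L^2$.
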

\begin{proof}
  We first show that
  \begin{align*}
    & \left(\begin{array}{ccccc}
      \gamma_{0}^2  & \star & \star  & \dots & \star \\
      \gamma_{1}^2 & \gamma_{1}^2 & \star & \dots & \star \\
      \gamma_{2}^2  & \gamma_{2}^2 & \gamma_{2}^2 & \dots & \star \\
      \vdots &  \vdots & \vdots & \ddots & \vdots \\
      \gamma_{N\!-\!1}^2 & \gamma_{N\!-\!1}^2 & \gamma_{N\!-\!1}^2 & \dots & \gamma_{N\!-\!1}^2   \\
    \end{array}\right)
      \begin{pmatrix}
        -F & -1 & -\frac{1}{\nu}- (N-1) & 0 &  & 0 & 0 \\
        0& 0 & \frac{1}{\nu}+ (N-2) & -\frac{1}{\nu}- (N-2)& & 0& 0 \\
        0& 0 & 0 & \frac{1}{\nu}+ (N-3) & &  0 & 0 \\
        \vdots & \vdots & \vdots & \vdots & \ddots&  \vdots & \vdots\\
        0 & 0 & 0 &0 &  & -\frac{1}{\nu} & -\frac{1}{\nu}
      \end{pmatrix}\\
    =&
       \begin{pmatrix}
         - F\gamma_{0}^2  & - \gamma_{0}^2  & - \nu L^ 2 & - \nu L^ 2 & \dots & - \nu L^ 2 & - \nu L^ 2 \\
         - F\gamma_{1}^2  &  - \gamma_{1}^2  & - \gamma_{1}^2  & - \nu L^ 2& \dots  & - \nu L^ 2 & - \nu L^ 2 \\
         -F\gamma_{2}^2 & -\gamma_{2}^2  & -\gamma_{2}^2  & -\gamma_{2}^2 & \dots & - \nu L^2 & - \nu L^2  \\
         \vdots & \vdots & \vdots & \vdots & \ddots  & \vdots  & \vdots \\
         - F\gamma_{N-1}^2 & -\gamma_{N-1}^2 & -\gamma_{N-1}^2 & -\gamma_{N-1}^2 & \dots & -\nu L^2 & -\nu L^2  \\
       \end{pmatrix} \\
    = &  B  =:[B_k]_{k\in [\star, 0, \dots, N+1]} =: [B_{i, k}]_{i\in [0, \dots, N-1], k\in [\star, 0, \dots, N+1]}.
  \end{align*}
  Clearly, we have that
  \(
  B_{N} = B_{N+1} = -\tfrac{C \delta_N}{\nu}= \tfrac{\gamma^2_{N-1}}{\nu} 1_N= -\nu L^2 1_N 
  \)
  where $1_N$ and $\delta_N$ denotes the vector of all ones and the $N$-th unit vector, respectively, using here that $\gamma^2_{N-1}=\nu^2 L^2$.
  Similarly, we also have
  \(
  B_{\star} =  -C F \delta_1 = -F [\gamma^2_0, \dots, \gamma^2_{N-1}]
  \)
  and
  \(
  B_0 = -C \delta_1 = - [\gamma^2_0, \dots, \gamma^2_{N-1}] .
  \)
  For any $k\in [1, \dots, N-1]$ and $i\geq k$ we observe that
  \[
    B_{i, k} = \gamma^2_{i} \left(-\frac 1\nu-(N-(k-1))\right)+\gamma^2_i \left(\frac 1\nu+(N-k)\right) = -\gamma_i^2
  \]
  whereas for any $k\in [1, \dots, N-1]$ and $i<k$ we have from Lemma \ref{eq:gamma-identity} that
  \[
    B_{i, k} = \gamma_{k-1}^2 \left(-\frac{1}{\nu}-(N-k)\right)+\gamma^2_{k} \left( \frac{1}{\nu}+(N-(k+1)) \right) =-\nu L^2.
  \]
  Finally, via straightforward algebraic manipulation we have that
  \begin{align*}
    & B^\top \left(C^{-1} B\right)\\
    = & \begin{pmatrix}
      - F\gamma_{0}^2 & - F\gamma_{1}^2 & -F\gamma_{2}^2 & \hdots  & - F\gamma_{N-1}^2\\
      - \gamma_{0}^2  & - \gamma_{1}^2 &  -\gamma_{2}^2 & \hdots & -\gamma_{N-1}^2 \\
      - \nu L^ 2 & - \gamma_{1}^2 & - \gamma_{2}^2 & \hdots &  -\gamma_{N-1}^2 \\
      - \nu L^ 2 & - \nu L^ 2 & -\gamma_{2}^2 & \hdots & -\gamma_{N-1}^2 \\
      \vdots & \vdots & \vdots &\ddots&\vdots  \\
      - \nu L^ 2 & - \nu L^ 2 & - \nu L^2 & \dots & -\nu L^2 \\
      - \nu L^ 2 & - \nu L^ 2& - \nu L^2 & \dots & -\nu L^2\\
    \end{pmatrix}
        \begin{pmatrix}
        -F & -1 & -\frac{1}{\nu}- (N-1) & 0 &  & 0 & 0 \\
        0& 0 & \frac{1}{\nu}+ (N-2) & -\frac{1}{\nu}- (N-2)& & 0& 0 \\
        0& 0 & 0 & \frac{1}{\nu}+ (N-3) & &  0 & 0 \\
        \vdots & \vdots & \vdots & \vdots & \ddots&  \vdots & \vdots\\
        0 & 0 & 0 &0 &  & -\frac{1}{\nu} & -\frac{1}{\nu}
    \end{pmatrix}\\
    = &
         \left(\begin{array}{ccccccc}
      F^2\gamma_0^2 & \star  & \star  & \star  & \dots & \star  & \star  \\
      F\gamma_0^2  & \gamma_0^2 & \star & \star  & \dots & \star & \star   \\
      F \nu L^2 & \nu L^2 & L^2 & \star & \dots & \star & \star    \\
      F \nu L^2 & \nu L^ 2 & \nu L^2 & L^2 &\dots  &  \star &\star   \\
      \vdots & \vdots & \vdots & \vdots &  \ddots  & \vdots & \vdots  \\
      F \nu L^2 & \nu L^ 2 & \nu L^ 2 & \nu L^ 2 & \dots & L^2 & L^2  \\
      F \nu L^2 & \nu L^ 2 & \nu L^ 2 & \nu L^ 2 & \dots & L^2 & L^2  
         \end{array}
        \right)
  \end{align*}
  with the help of Lemma \ref{eq:gamma-identity} (to derive the entries resulting in $\nu L^2$) as well as Lemma \ref{eq:gamma-identity-2} (to derive the entries equal to $L^2$).
\end{proof}

\begin{lemma}
  \label{eq:gamma-identity}
  We have
  \[
    \gamma_{k-1}^2 \left(-\frac{1}{\nu}-(N-k)\right)+\gamma^2_{k} \left( \frac{1}{\nu}+(N-(k+1)) \right) =-\nu L^2
  \]
  for all $k\in [1, \dots, N-1]$.
\end{lemma}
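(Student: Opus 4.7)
The plan is a direct algebraic verification. The claim is structured so that the two bracketed factors in $k$ are engineered precisely to cancel the denominators appearing in $\gamma_{k-1}^2$ and $\gamma_{k}^2$ respectively (as defined just before~\eqref{def:nu}). Once this matching is spotted, the identity collapses in a couple of lines.

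First, I would clear the $1/\nu$ from the bracketed factors by writing
\[
-\frac{1}{\nu}-(N-k) = -\frac{1+(N-k)\nu}{\nu}, \qquad \frac{1}{\nu}+(N-(k+1)) = \frac{1+(N-(k+1))\nu}{\nu}.
\]
Next, I would substitute
\[
\gamma_{k-1}^2 = L^2\frac{(N-k+1)\nu^2}{1+(N-k)\nu}, \qquad \gamma_k^2 = L^2\frac{(N-k)\nu^2}{1+(N-(k+1))\nu},
\]
obtained from the definition $\gamma_j^2 = L^2(N-j)\nu^2/(1+(N-(j+1))\nu)$ with $j=k-1$ and $j=k$. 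The denominator $1+(N-k)\nu$ of $\gamma_{k-1}^2$ cancels against the rewritten first factor, and the denominator $1+(N-(k+1))\nu$ of $\gamma_k^2$ cancels against the rewritten second factor, leaving
\[
-L^2(N-k+1)\nu + L^2(N-k)\nu = -\nu L^2,
\]
which is exactly the claimed identity.

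The argument requires no more than the explicit formula for $\gamma_j^2$ and careful index bookkeeping; the only genuine point to watch is that the shift by one in the index ($k-1$ vs.\ $k$) aligns correctly with the shift in the multiplier ($(N-k)$ vs.\ $(N-(k+1))$) so that the denominators cancel cleanly. The condition $k\in [1,\dots,N-1]$ guarantees that both $\gamma_{k-1}^2$ and $\gamma_{k}^2$ are well defined (neither denominator vanishes, since $\nu\in[0,1]$ makes $1+(N-k)\nu>0$ for all relevant $k$), so no boundary case needs separate treatment.
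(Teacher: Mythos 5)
Your proof is correct and follows essentially the same route as the paper's: substitute the explicit formula for $\gamma_j^2$ at $j=k-1$ and $j=k$, observe that the denominators $1+(N-k)\nu$ and $1+(N-(k+1))\nu$ cancel against the rewritten bracketed factors, and collapse the remainder to $-\nu L^2$. The index bookkeeping you highlight is exactly the only nontrivial point, and you handle it correctly.
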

\begin{proof}
  Observe
  \begin{align*}
    & \gamma_{k-1}^2 \left(-\frac{1}{\nu}-(N-k)\right)+\gamma^2_{k} \left( \frac{1}{\nu}+(N-(k+1)) \right)\\
    = & L^2 \frac{(N-k+1)\nu^2}{1+(N-k)\nu} \left(-\frac{1}{\nu}-(N-k)\right)+L^2 \frac{(N-k)\nu^2}{1+(N-(k+1))\nu} \left( \frac{1}{\nu}+(N-(k+1)) \right)\\
    = & \frac 1\nu L^2 \left(\frac{(N-k+1)\nu^2}{1+(N-k)\nu} \left(-1-(N-k)\nu\right)  +\frac{(N-k)\nu^2}{1+(N-(k+1))\nu} \left( 1+(N-(k+1))\nu \right) \right)\\
    = & \frac 1\nu L^2 \left(-(N-k+1)\nu^2  + (N-k)\nu^2\right)=-\nu L^2
  \end{align*}
  for all $k\in [1, \dots, N-1]$.
\end{proof}

\begin{lemma}
  \label{eq:gamma-identity-2}
  We have
  \[
    -\nu L^2 \left(-\frac{1}{\nu}-(N-k)\right) - \gamma^2_{k} \left( \frac{1}{\nu}+(N-(k+1)) \right) = L^2
  \]
  for all $k\in [0, \dots, N-1]$.
\end{lemma}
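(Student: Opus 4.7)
The statement is a purely algebraic identity in $\nu$, $L$, and $k$, so the natural plan is just to substitute the definition
\[
\gamma_k^2 = L^2 \frac{(N-k)\nu^2}{1+(N-(k+1))\nu}
\]
into the left-hand side and simplify. No recursion or induction should be needed.

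First I would expand the first term: since $-\nu L^2\bigl(-\tfrac{1}{\nu}-(N-k)\bigr) = L^2 + (N-k)\nu L^2$, it contributes the desired $L^2$ plus a leftover $(N-k)\nu L^2$. So the entire claim reduces to showing that the second term equals exactly this leftover, i.e.\
\[
\gamma_k^2 \left( \frac{1}{\nu}+(N-(k+1)) \right) = (N-k)\nu L^2.
\]

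This last equality is where the definition of $\gamma_k^2$ does all the work: the factor $\tfrac{1}{\nu}+(N-(k+1)) = \tfrac{1+(N-(k+1))\nu}{\nu}$ is precisely the reciprocal (up to the factor $L^2(N-k)\nu^2$) of the denominator appearing in $\gamma_k^2$. Substituting and canceling $1+(N-(k+1))\nu$ yields $L^2(N-k)\nu$, as required. The range $k\in[0,\dots,N-1]$ ensures $1+(N-(k+1))\nu>0$ (since $\nu\in[0,1]$), so no division issues arise.

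There is no real obstacle here; the lemma is a one-line cancellation once the definition of $\gamma_k^2$ is plugged in. The only thing worth being careful about is signs: keeping track of the minus signs in $-\tfrac{1}{\nu}-(N-k)$ versus $+\tfrac{1}{\nu}+(N-(k+1))$ and verifying that the unwanted $(N-k)\nu L^2$ contributions indeed cancel rather than double up. Once those are matched, the remaining $L^2$ term is exactly the claimed right-hand side, completing the proof.
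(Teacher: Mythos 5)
Your proof is correct and follows essentially the same route as the paper's: substitute the definition of $\gamma_k^2$, observe that $\tfrac{1}{\nu}+(N-(k+1))=\tfrac{1+(N-(k+1))\nu}{\nu}$ cancels the denominator, and check that the leftover $(N-k)\nu L^2$ terms cancel. Nothing is missing.
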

\begin{proof}
  Observe
  \begin{align*}
    & -\nu L^2 \left(-\frac{1}{\nu}-(N-k)\right)-\gamma^2_{k} \left( \frac{1}{\nu}+(N-(k+1)) \right)\\
    = & L^2 \left(1+(N-k)\nu\right)-L^2 \frac{(N-k)\nu^2}{1+(N-(k+1))\nu} \left( \frac{1}{\nu}+(N-(k+1)) \right)\\
    = &  L^2 \left(1+(N-k)\nu\right)- \frac 1\nu L^2 \left(\frac{(N-k)\nu^2}{1+(N-(k+1))\nu} \left( 1+(N-(k+1))\nu \right) \right)\\
    = & L^2 \left(1+(N-k)\nu\right)- \frac 1\nu L^2 (N-k)\nu^2= L^2
  \end{align*}
  for all $k\in [0, \dots, N-1]$.
\end{proof}

\end{toappendix}

When $\sigma=0$, then the solution of~\eqref{def:nu} is $\nu=0$, which leads to the known bounds~\eqref{eq:classical-stepsize}. This lower bound also matches the upper bound in Proposition \ref{prop:u-L-bound}, with $y_k=0$ for all $k$, and we obtain $\alpha_k^{\mathbb L}=\alpha_k^*$ from~\eqref{eq:zamani-stepsize}.
Proposition \ref{prop:u-L-bound} implies in fact that the entire manifold of stepsizes $h\in \mc H(\alpha^\star)$ given in \eqref{eq:zamani-stepsize} enjoys the same optimal worst-case suboptimality gap.

When $\sigma = \sqrt{N}$, then $\nu=1$, which corresponds to the trivial upper bound
\[
  f_{N+1}-f_\star = f_0 - f_\star \leq RL,
\]
This agrees with the observation that no progress is possible since the adversary can maximally corrupt the subgradient $\tilde g_k = g_k+e_k=0$ for $k\in [0, \dots, N-1]$. In this regime, an optimal solution of the performance optimization problem \eqref{eq:gen_pop} is given by  $\alpha=0$, $\nu_\star=\frac12$, $\nu_k=\frac12(N+1)^{-1}$ for all $k\in [0, \dots, N]$ and $\tau=0$, resulting in a matching upper bound.
This suboptimality upper bound is attained with equality for the function $f(x)=L|x|$ with $x_0=R$.

In~\cref{sec:analysis}, we study the asymptotics of $\ell_N(\sigma)$ for $0<\sigma<\sqrt N$ and compare it to the asymptotics of $u^{\mathbb S}_N(\sigma)$ and $u^{\mathbb L}_N(\sigma)$.

\section{Analysis}\label{sec:analysis}
In this section, we analyze the upper and lower bounds on the suboptimality gap presented in~\cref{prop:u-L-bound} and~\cref{cor:lower} for $\sigma\in(0,L\sqrt N)$. 
Figure~\ref{fig:rel_gap} shows the relative worst-case suboptimality performance gap 
\begin{equation}
  \label{eq:relative-gap}
  \max_{\sigma\in [0, \sqrt{N}]}\frac{u_N^{\{\mathbb S, \mathbb L\}}(\sigma)-\ell_N(\sigma)}{\ell_N(\sigma)}
\end{equation}
between our upper and lower bounds on the worst-case suboptimality gap. We see that this relative difference never exceeds $1\%$. That is, the best subgradient method found by either \cref{thm:main-upper-bound} and \cref{prop:u-L-bound} can not be (significantly) improved by any algorithm satisfying the cone condition in Equation~\eqref{eq:cone_condition}. In the remainder of this section, we analyze this difference also via analytical techniques.
In the following lemma, we present an explicit step size schedule that is admissible w.r.t.~\cref{prop:u-L-bound} and provides an upper bound for $u_N^{\{\mathbb S, \mathbb L\}}$.


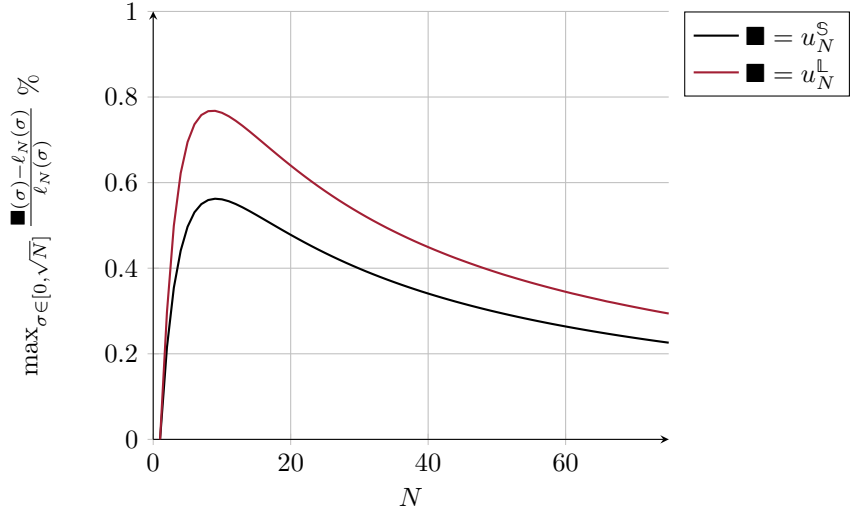
\begin{figure}[]
  \centering
  \begin{tikzpicture}
    \begin{axis}[
      scaled ticks=true,
      grid=major,
      axis lines=left,
      clip=false,
      xlabel={$N$},
      ylabel={$\max_{\sigma\in [0,\sqrt{N}]}\frac{\blacksquare(\sigma)-\ell_N(\sigma)}{\ell_N(\sigma)}~\%$},
      xmin=0,
      xmax=75,
      ymin=0,
      ymax=1,
      x label style={at={(axis description cs:0.5,0.0)},anchor=north},
      y label style={at={(axis description cs:0,.5)},rotate=0,anchor=south, align=right},
      legend pos=outer north east,
      scale=1
      ]

      \addplot[mark=none, thick] table [x=N, y=rel_gap_sdo, col sep=comma] {figures/rel_gap/rel_gap.csv};
      \addlegendentry{$\blacksquare=u^{\mathbb S}_N$};

      \addplot +[mark=none, thick] table [x=N, y=rel_gap_soc, col sep=comma] {figures/rel_gap/rel_gap.csv};
      \addlegendentry{$\blacksquare=u^{\mathbb L}_N$};      

    \end{axis}        
  \end{tikzpicture}
  \caption{Relative difference between upper and lower worst-case performance bounds as a function of the number of steps $N$. For every $N$, the shown value is the maximum of the relative difference over $\sigma\in[0,\sqrt N]$. Theorem \ref{thm:squeeze} implies that this difference decays asymptotically to zero at rate at least $\mc O(\log(N)/N)$.}
  \label{fig:rel_gap}
\end{figure}

We denote the \emph{generalized harmonic numbers} by
\[
H_m(a)=\sum_{k=0}^{m-1}\frac1{a+k},
\]
so that $H_m(1)$ corresponds to the $m$-th harmonic number.
\begin{lemma}\label{lem:admissible}
    Consider the step sizes
    \[
    \alpha_k'=\frac{R}{L}\frac{N-k}{(N+1)^{3/2}}\frac{u(\sigma)}{u(\sigma)^2-(u(\sigma)^2-1)\frac{k}{N+1}}\sqrt{\frac{\sigma^2+2\log u(\sigma)}{\sigma^2+H_{N+1}\left(1+\frac{N+1}{u(\sigma)^2-1}\right)}}
    \]
    where the function $u(\sigma)\ge1$ is defined implicitly as the solution in
    \begin{equation}\label{eq:lambda-implicit}
        \sigma^2=u^2-1-2\log u.
    \end{equation}
    Any subgradient method $h\in \mc H(\alpha')$ enjoys the worst-case suboptimality gap
    \[
    f_{N+1}-f_\star \le \frac{RL}{\sqrt{N+1}}\cdot u(\sigma).
    \]
    Furthermore,
    \begin{equation}\label{eq:negligible-factor}
    1\le  \xi_N(\sigma) = \sqrt{\frac{\sigma^2+2\log u(\sigma)}{\sigma^2+H_{N+1}\left(1+\frac{N+1}{u(\sigma)^2-1}\right)}}\le 1+\frac{2}{N},
    \end{equation}
    for any $\sigma\in[0,\sqrt{N}]$.
\end{lemma}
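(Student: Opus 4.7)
The plan is to exhibit a specific admissible sequence $(y_k)_{k=0}^N$ for the performance optimization problem of Proposition \ref{prop:u-L-bound} whose induced conic combination matches $\alpha'$, and whose performance bound evaluates to $\tfrac{RL}{\sqrt{N+1}}\, u(\sigma)/\xi_N(\sigma) \le \tfrac{RL}{\sqrt{N+1}}\, u(\sigma)$. Since the continuous analog of the admissibility recursion $y_{k+1} = y_k + y_k^2$ is the ODE $y' = y^2$ with solution $y(t) = 1/(C-t)$, I take the ansatz $y_k = 1/(C-k)$ with $C = (N+1)\, u(\sigma)^2/(u(\sigma)^2-1)$. A direct algebraic manipulation gives $y_{k+1}-y_k-y_k^2 = 1/((C-k-1)(C-k)^2) \ge 0$, which confirms admissibility.

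For this sequence $y_0 = (u^2-1)/((N+1)u^2)$ and $\sum_{k=0}^N y_k = H_{N+1}(C-N) = H_{N+1}\bigl(1 + (N+1)/(u^2-1)\bigr)$. Following the derivation in the proof of Proposition \ref{prop:u-L-bound}, the induced conic combination is $\alpha_k = \tfrac{R(N-k)}{L(N+1)^{3/2}} \cdot \tfrac{y_k/y_0}{U}$ with $U = \sqrt{(\sigma^2 + \sum_k y_k)/((N+1)y_0)}$, and these step sizes guarantee a worst-case suboptimality gap of $\tfrac{RL}{\sqrt{N+1}}\, U$. Using the defining relation $u^2 - 1 = \sigma^2 + 2\log u$, algebra simplifies
\[
U^2 = \frac{u(\sigma)^2 \bigl(\sigma^2 + H_{N+1}(1 + (N+1)/(u(\sigma)^2-1))\bigr)}{u(\sigma)^2 - 1} = \frac{u(\sigma)^2}{\xi_N(\sigma)^2}.
\]
Substituting $y_k/y_0 = u(\sigma)^2/(u(\sigma)^2 - (u(\sigma)^2-1)k/(N+1))$ into the step size formula returns exactly the $\alpha'_k$ stated in the lemma. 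The performance guarantee $\tfrac{RL}{\sqrt{N+1}}\, u(\sigma)$ then follows as soon as $\xi_N(\sigma) \ge 1$ is established.

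It remains to prove the two bounds on $\xi_N(\sigma)$. Write $a = 1 + (N+1)/(u(\sigma)^2-1)$. Since $s \mapsto 1/(a+s)$ is decreasing, the standard Riemann sandwich gives $\log\tfrac{a+N+1}{a} \le H_{N+1}(a) \le \log\tfrac{a+N}{a-1}$, and a direct calculation yields $(a+N)/(a-1) = u(\sigma)^2$. The upper Riemann bound thus proves $H_{N+1}(a) \le 2\log u(\sigma)$, i.e.\ $\xi_N(\sigma) \ge 1$. For $\xi_N(\sigma) \le 1 + 2/N$, the lower Riemann bound combined with the identity
\[
2\log u - \log\tfrac{a+N+1}{a} = \log\!\left(1 + \tfrac{(u^2-1)^2}{u^2(N+2)-1}\right) \le \tfrac{(u^2-1)^2}{u^2 N}
\]
and the elementary calculus inequality $(u^2-1)^2 \le 2\, u^2 \sigma^2$ give $\xi_N(\sigma)^2 - 1 \le 2/N$, whence $\xi_N(\sigma) \le \sqrt{1+2/N} \le 1 + 2/N$. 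The limiting case $\sigma = 0$ (where $u = 1$ and several expressions are indeterminate) is handled by continuity. The main technical obstacle is the final calculus inequality, which (setting $v = u^2-1 \ge 0$) reads $\log(1+v) \le v(v+2)/(2(v+1))$; it can be verified by checking that $g(v) = v(v+2)/(2(v+1)) - \log(1+v)$ satisfies $g(0) = 0$ and $g'(v) = v^2/(2(v+1)^2) \ge 0$.
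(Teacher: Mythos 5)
Your proposal is correct and follows essentially the same route as the paper: the ansatz $y_k = 1/(C-k)$ with $C=(N+1)u^2/(u^2-1)$ is exactly the paper's choice $y_k'=y_0'/(1-ky_0')$ with $y_0'=(1-u^{-2})/(N+1)$, and both arguments then combine the integral (Riemann) bounds on $H_{N+1}(a)$ with the defining relation $u^2-1=\sigma^2+2\log u$ to get $\xi_N\ge 1$ and $\xi_N\le 1+2/N$. The only differences are cosmetic (you verify feasibility by the explicit identity $y_{k+1}-y_k-y_k^2=1/((C-k-1)(C-k)^2)$ and bound the harmonic-number defect via $(u^2-1)^2\le 2u^2\sigma^2$ rather than the paper's $H_{N+1}\ge 2\log u-\tfrac{u^2-1}{N+1}$, both of which check out).
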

\begin{appendixproof}[Proof of~\cref{lem:admissible}]
    Note that for any $y_0'>0$, the sequence $y_k'=y_0'/(1-ky_0')$ satisfies $y_{k+1}'>y_k'+(y_k')^2$, so that it satisfies the conditions of~\cref{prop:u-L-bound}.
    We pick
    \[
    y_0'=\frac{1-u(\sigma)^{-2}}{N+1},
    \]
    so that
    \[
    y_k'=\frac{1-u(\sigma)^{-2}}{N+1-k(1-u(\sigma)^{-2})}=\frac{1}{\frac{u(\sigma)^2+N}{u(\sigma)^2-1}+N-k}.
    \]
    Hence,
    \[
    \sum_{k=0}^Ny_k'=\sum_{k=0}^N\frac{1}{\frac{u(\sigma)^2+N}{u(\sigma)^2-1}+k}=H_{N+1}\left(\frac{u(\sigma)^2+N}{u(\sigma)^2-1}\right)=H_{N+1}\left(1+\frac{N+1}{u(\sigma)^2-1}\right),
    \]
    for the generalized harmonic number $H_m=\sum_{k=0}^{m-1}(a+m)^{-1}$. This leads to the factor
    \[
    u_N'=\sqrt{\frac{\sigma^2+H_{N+1}(1+\frac{N+1}{u(\sigma)^2-1})}{(N+1)y_0}}=\sqrt{\frac{\sigma^2+H_{N+1}(1+\frac{N+1}{u(\sigma)^2-1})}{1-u(\sigma)^{-2}}}=u(\sigma)\sqrt{\frac{\sigma^2+H_{N+1}(1+\frac{N+1}{u(\sigma)^2-1})}{u(\sigma)^2-1}}.
    \]
    By an integral bound, the generalized harmonic numbers are bounded by
    \[
    H_m(a)\le\log\left(1+\frac m{a-1}\right).
    \]
    This leads to
    \[
    \sqrt{\frac{\sigma^2+H_{N+1}(1+\frac{N+1}{u(\sigma)^2-1})}{u(\sigma)^2-1}}\le \sqrt{\frac{\sigma^2+\log(1+u(\sigma)^2-1)}{u(\sigma)^2-1}}.
    \]
    By the definition of $u(\sigma)$, we have $u(\sigma)^2-1=\sigma^2+2\log u(\sigma)$, so that the right-hand-side equals $1$. This results in $u_N'\le u(\sigma)$.
    This holds for the sequence
    \begin{equation}\label{eq:alpha-prime1}
    \alpha_k'=\frac{R}{L}\frac{N-k}{(N+1)^{3/2}}\frac{y_k'}{y_0'u_N'}.
    \end{equation}
    We compute
    \[
    \frac{y_k'}{y_0'}=\frac{1}{1-ky_0'}=\frac{N+1}{N+1-k(1-u(\sigma)^{-2})}=\frac{(N+1)u(\sigma)^2}{(N+1)u(\sigma)^2-(u(\sigma)^2-1)k}.
    \]
    This results in
    \[
    \frac{y_k'}{y_0'u_N'}=\frac{(N+1)u(\sigma)}{(N+1)u(\sigma)^2-(u(\sigma)^2-1)k}\sqrt{\frac{\sigma^2+2\log u(\sigma)}{\sigma^2+H_{N+1}\left(1+\frac{N+1}{u(\sigma)^2-1}\right)}}.
    \]
    The expression for the step sizes is obtained by substituting this quantity into~\eqref{eq:alpha-prime1}.

    We now show that the quantity in the square root converges to $1$. Let
    \[
    q=\frac{\sigma^2+H_{N+1}\left(1+\frac{N+1}{u(\sigma)^2-1}\right)}{\sigma^2+2\log u(\sigma)}=1+\frac{H_{N+1}\left(1+\frac{N+1}{u(\sigma)^2-1}\right)-2\log u(\sigma)}{\sigma^2+2\log u(\sigma)},
    \]
    so that the quantity in the square root is $q^{-1/2}$.
    Using the same integral bound as before, it follows that $q\le 1$. To lower-bound $q$, we use the other integral bound to obtain
    \begin{eqnarray*}
    H_{N+1}\left(\frac{N+u^2}{u^2-1}\right)
    &\ge \log\left(1+\frac{N+1}{\frac{N+u^2}{u^2-1}}\right)
    &=\log\left(1+(u^2-1)\frac{N+1}{N+u^2}\right)\\
    &=\log\left(u^2-\frac{(u^2-1)^2}{N+u^2}\right)
    &=2\log u+\log\left(1-\frac{(u^2-1)^2}{u^2(N+u^2)}\right)\\
    &\ge 2\log u+\log\left(\frac{N+1}{N+u^2}\right)
    &=2\log u-\log\left(1+\frac{u^2-1}{N+1}\right)\\
    &\ge 2\log u-\frac{u^2-1}{N+1}.
    \end{eqnarray*}
    This leads to the following bound
    \[
    q-1\ge -\frac{\frac{u^2-1}{N+1}}{\sigma^2+2\log u}=-\frac{1}{N+1},
    \]
    where we used $\sigma^2+2\log u=u^2-1$ in the last step.
    We conclude that
    \[
    1\le q^{-1/2}\le \sqrt{\frac{N+1}{N}}\le 1+\frac{2}{N}.
    \]
\end{appendixproof}

The bounds in Equation~\eqref{eq:negligible-factor} show that the square root factor in the expression of $\alpha'_k$ is nearly negligible.
Comparing the performance guarantee from~\cref{lem:admissible} to that of~\cref{prop:u-L-bound}, we see that the $N$-dependent quantity $u_N^{\mathbb L}(\sigma)$ is replaced by $u(\sigma)$, which does not depend on $N$.
Finally, we remark that the function $u$ can be represented with the help of the \emph{Lambert W-function}~\citep{Corless_Gonnet_Hare_Jeffrey_Knuth_1996} as
\[
u=\sqrt{-W_{-1}(-e^{-1-\sigma^2})},
\]
where $W_{-1}(z)$ is be the negative real solution in $we^{w}=z\in[-e^{-1},0)$.
However, in what follows we find it more convenient to derive properties directly from its implicit representation in Equation~\eqref{eq:lambda-implicit}.
\begin{lemma}\label{lem:lambda}
    The function $u(\sigma)$ defined by Equation~\eqref{eq:lambda-implicit} is convex, satisfies the bounds
    \[
    \max\left\{\sqrt2\sigma,\sigma^2+\log(1+\sigma^2)\right\}\le u(\sigma)^2-1\le 
    \sqrt2\sigma+\sigma^2,
    \]
    and has asymptotics
    \[
    u(\sigma)=\begin{cases}
        \sigma+\bigO{\sigma^{-1}\log\sigma}&\text{ as }\sigma\to\infty,\\
        1+\sqrt{\frac12}\sigma+\bigO{\sigma^2},&\text{ as }\sigma\to0.
    \end{cases}
    \]
\end{lemma}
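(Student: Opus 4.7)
The plan is to work directly with the implicit equation $\sigma^2 = \phi(u) \defn u^2 - 1 - 2\log u$. Since $\phi'(u) = 2(u^2-1)/u \geq 0$ on $[1,\infty)$ with equality only at $u=1$, and $\phi(1)=0$, the map $\phi$ is a bijection $[1,\infty) \to [0,\infty)$, so $u(\sigma) = \phi^{-1}(\sigma^2)$ is well-defined and smooth on $(0,\infty)$. From the same implicit equation, differentiation yields $u'(\sigma) = \sigma u/(u^2-1)$, and a second differentiation, after clearing denominators, gives
\[
u''(\sigma)(u^2-1)^3 = u\bigl[(u^2-1)^2 - \sigma^2(u^2+1)\bigr].
\]
Substituting $\sigma^2 = u^2 - 1 - 2\log u$ reduces the sign analysis to showing the scalar inequality $\log u \geq (u^2-1)/(u^2+1)$ for $u \geq 1$. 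I would verify this by setting $f(u) = \log u - (u^2-1)/(u^2+1)$, noting $f(1)=0$, and checking that $f'(u) = (u^2-1)^2/\bigl[u(u^2+1)^2\bigr] \geq 0$, which gives $u'' \geq 0$ and hence convexity.

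For the bounds on $u^2-1$, the lower bound $u^2 - 1 \geq \sigma^2 + \log(1+\sigma^2)$ follows instantly: rewriting the defining equation as $u^2 - 1 = \sigma^2 + 2\log u$, the claim becomes $2\log u \geq \log(1+\sigma^2)$, i.e.\ $u^2 \geq 1 + \sigma^2$, which is immediate since $u^2 = 1 + \sigma^2 + 2\log u \geq 1+\sigma^2$. For the upper bound $u^2 - 1 \leq \sqrt{2}\sigma + \sigma^2$, the same rewriting turns it into $2\log u \leq \sqrt{2}\sigma$, which after squaring amounts to $2(\log u)^2 + 2\log u \leq u^2-1$; setting $t = \log u \geq 0$, this is the Taylor-verifiable $2t^2 + 2t \leq e^{2t} - 1$. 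For the remaining lower bound $u^2 - 1 \geq \sqrt{2}\sigma$, squaring reduces it (with $v = u^2-1 \geq 0$) to $v - \log(1+v) \leq v^2/2$, which I would prove by differentiating the difference and observing $\frac{d}{dv}\bigl[v^2/2 - v + \log(1+v)\bigr] = v^2/(1+v) \geq 0$.

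For the asymptotics as $\sigma \to 0$, I would plug the ansatz $u = 1 + a\sigma + b\sigma^2 + O(\sigma^3)$ into $\sigma^2 = u^2 - 1 - 2\log u$, expand $u^2 - 1 = 2a\sigma + (a^2+2b)\sigma^2 + \dots$ and $2\log u = 2a\sigma + (2b - a^2)\sigma^2 + \dots$, and match coefficients to obtain $2a^2 = 1$, giving $u(\sigma) = 1 + \sigma/\sqrt{2} + O(\sigma^2)$. For $\sigma \to \infty$, I would bootstrap using the upper bound already established: since $u \leq \sqrt{1 + \sqrt{2}\sigma + \sigma^2}$, we get $\log u = \log \sigma + O(1/\sigma)$, and substituting back into $u^2 = 1 + \sigma^2 + 2\log u$ yields $u^2 = \sigma^2 + 2\log \sigma + O(1)$, whence $u = \sigma + \log \sigma/\sigma + O(\sigma^{-1}) = \sigma + O(\sigma^{-1}\log \sigma)$.

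The main obstacle is the convexity step: the implicit double differentiation must be carried out carefully, and the resulting expression has to be massaged into the clean one-variable inequality $\log u \geq (u^2-1)/(u^2+1)$. Once that reduction is done, all remaining parts of the lemma are either immediate (the $\sigma^2 + \log(1+\sigma^2)$ lower bound), routine Taylor calculations (the two $\sqrt{2}\sigma$ bounds), or standard asymptotic bootstrapping (the $\sigma \to 0$ and $\sigma \to \infty$ expansions).
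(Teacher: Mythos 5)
Your proposal is correct, and for most parts it mirrors the paper's argument (implicit differentiation for $u'$, the bootstrap $u^2\ge 1+\sigma^2$ for the $\sigma^2+\log(1+\sigma^2)$ bound, and reduction of $u^2-1\ge\sqrt2\sigma$ to $v-\log(1+v)\le v^2/2$). The genuinely different — and in fact more careful — step is convexity. The paper differentiates $2\sigma=u'\,(2u-2/u)$ a second time and writes the term $2(u')^2(1-u^{-2})$, whereas the correct derivative of $2u-2/u$ is $2+2/u^2$; with the right sign one lands exactly on your identity $u''(u^2-1)^3=u\bigl[(u^2-1)^2-\sigma^2(u^2+1)\bigr]$, and positivity is \emph{not} the trivial observation $u^2-1-\sigma^2=2\log u\ge0$ used in the paper but requires the extra inequality $\log u\ge(u^2-1)/(u^2+1)$, which you correctly reduce to $f'(u)=(u^2-1)^2/[u(u^2+1)^2]\ge0$. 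So your route costs one additional scalar inequality but is the computation that actually closes the argument. Two smaller differences: for the upper bound $u^2-1\le\sqrt2\sigma+\sigma^2$ you square $2\log u\le\sqrt2\sigma$ down to $2t^2+2t\le e^{2t}-1$, while the paper integrates $u^2-1=\int_0^\sigma 2s\bigl(1+\tfrac1{u(s)^2-1}\bigr)\,ds$ and feeds in the already-proved lower bound $u(s)^2-1\ge\sqrt2 s$ — both work, yours being slightly more self-contained; and your bootstrapped large-$\sigma$ expansion actually justifies the stated $O(\sigma^{-1}\log\sigma)$ error term, which the paper only gestures at. One minor caveat: at $\sigma=0$ the implicit function theorem degenerates ($\phi'(1)=0$), so the ansatz-matching for $\sigma\to0$ should be backed by the two-sided bounds $\sqrt2\sigma\le u^2-1\le\sqrt2\sigma+\sigma^2$ you have already established (as the paper does), which immediately give $u=1+\sigma/\sqrt2+O(\sigma^2)$.
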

\begin{appendixproof}[Proof of~\cref{lem:lambda}]
    To prove convexity, we take the first two derivatives of~\eqref{eq:lambda-implicit} w.r.t. $\sigma$ and obtain
    \[
    2\sigma=u'\cdot (2u-2/u)\Rightarrow u'=\frac{u\sigma}{u^2-1},
    \]
    and
    \[
    2=u''\cdot (2u-2/u)+2(u')^2(1-u^{-2})\Rightarrow u''=\frac{u}{(u^2-1)^2}\left(u^2-1-\sigma^2\right),
    \]
    which is nonnegative since $u^2-1-\sigma^2=2\log u\ge0.$ 
    The lower bound $\sqrt2\sigma$ is derived using
    \[
    u^2-1-\sigma^2=\log(1+u^2(\sigma)-1)\ge u(\sigma)^2-1-\frac12(u(\sigma)^2-1)^2,
    \]
    which results in $u^2-1\ge\sqrt{2\sigma^2}.$ To derive the other lower bound, we write
    \begin{equation}\label{eq:u-bootstrap}
    u^2=1+\sigma^2+\log(u^2).
  \end{equation}
    From $u^2\ge1$, we deduce
    \[
    u^2\ge 1+\sigma^2.
    \]
    Substituting this bound into~\eqref{eq:u-bootstrap} yields
    \[
    u^2\ge1+\sigma^2+\log(1+\sigma^2).
    \]
    For the upper bound, we write
    \begin{eqnarray*}
        u(\sigma)^2-1
        &=\int_0^\sigma 2u(s)\cdot u'(s)ds
        &=\int_0^\sigma 2s\frac{u(s)^2}{u(s)^2-1}ds
        =\int_0^\sigma 2s\left(1+\frac{1}{u(s)^2-1}\right)ds\\
        &\le\int_0^\sigma (2s+\sqrt2)ds
        &=\sigma^2+\sqrt2\sigma,
    \end{eqnarray*}
    where we used the bound $u(\sigma)^2-1\ge\sqrt2\sigma$.
    
    We now derive the asymptotics. Note that $u(\sigma)\to\infty$ as $\sigma\to\infty$, so that the right-hand-side of~\eqref{eq:lambda-implicit} is dominated by $u^2$, which leads to the $u(\sigma)\sim\sigma$ asymptotics. For $\sigma\to0$, the bounds yield the desired asymptotics, after using
    \[
    u(\sigma)=\sqrt{1+\sqrt{2}\sigma+\bigO{\sigma^2}}=1+\sqrt{1/2}\sigma+\bigO{\sigma^2}.
    \]
\end{appendixproof}

The following theorem shows that the essential behaviour of the bounds introduced so far, i.e., $u_N^{\mathbb S},u_N^{\mathbb L}$ and $\ell_N$, are all captured by the function $u$ defined in Equation \eqref{eq:lambda-implicit}.
\begin{theorem}\label{thm:squeeze}
    For any $N$ and $\sigma=\gamma/L\in[0,\sqrt{N}]$, the following inequalities hold
    \[
    \left(1-\frac52\frac{\log (N+1)}{N}\right)u(\sigma)\le \ell_N(\sigma)\le u_N^{\mathbb S}(\sigma)\le u_N^{\mathbb L}(\sigma)\le u(\sigma).
    \]
\end{theorem}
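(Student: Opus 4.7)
The chain $\ell_N(\sigma) \le u_N^{\mathbb S}(\sigma) \le u_N^{\mathbb L}(\sigma) \le u(\sigma)$ collects three facts that follow essentially from earlier sections, and I would dispose of them first. The inequality $u_N^{\mathbb L}(\sigma) \le u(\sigma)$ is already contained in the proof of Lemma~\ref{lem:admissible}: the sequence $y_k' = y_0'/(1 - k y_0')$ with $y_0' = (1 - u(\sigma)^{-2})/(N+1)$ is feasible in~\eqref{eq:exact-convex-procedure} and gives objective value at most $u(\sigma)^2$. For $u_N^{\mathbb S}(\sigma) \le u_N^{\mathbb L}(\sigma)$, the proof of Proposition~\ref{prop:u-L-bound} shows that any feasible point of the second-order cone program induces, via the Schur-complement construction of $\nu_\star, \nu_k, \tau$ given there, a feasible point of the semidefinite program in Corollary~\ref{cor:u-S-bound} with the same objective value. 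Finally, $\ell_N(\sigma) \le u_N^{\mathbb S}(\sigma)$ follows because the admissible subgradient method associated with the optimal conic combination $\alpha^*$ in $u_N^{\mathbb S}$ has non-negative step sizes and hence satisfies the cone condition~\eqref{eq:cone_condition}: its worst-case suboptimality gap is bounded above by $RL\, u_N^{\mathbb S}(\sigma)/\sqrt{N+1}$ via Corollary~\ref{cor:u-S-bound} and below by $RL\, \ell_N(\sigma)/\sqrt{N+1}$ via Corollary~\ref{cor:lower}.

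The only substantive inequality is the outermost left one. My first step is to derive a useful closed form for $\ell_N$. Reindexing~\eqref{def:nu} via $j = N - 1 - k$ and using the partial-fraction identity $(j+1)\nu/(1+j\nu) = 1 - (1-\nu)/(1+j\nu)$ rewrites the defining equation for $\nu$ as $\sigma^2 = N\nu - (1-\nu) H_N(1/\nu)$, so that
\[
\ell_N(\sigma)^2 \;=\; 1 + N\nu \;=\; 1 + \sigma^2 + (1-\nu)\, H_N(1/\nu).
\]
Subtracting the implicit identity $u(\sigma)^2 = 1 + \sigma^2 + \log u(\sigma)^2$ and invoking the integral lower bound $H_N(1/\nu) \ge \log(1 + N\nu) = \log \ell_N(\sigma)^2$ will then yield $\ell_N^2 - u^2 \ge (1-\nu)\log \ell_N^2 - \log u^2$.

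Next, setting $\Delta = u^2 - \ell_N^2 \ge 0$ (non-negative by the chain already established) and invoking the elementary inequality $\log(1 - x) \ge -x/(1 - x)$ on $x = \Delta/u^2$ will convert the display above into $\nu \log u^2 \ge \Delta(\ell_N^2 - 1 + \nu)/\ell_N^2$. The key simplification is that $\ell_N^2 - 1 + \nu = (N + 1)\nu$, so the factor $\nu$ cancels cleanly and I arrive at
\[
u(\sigma)^2 - \ell_N(\sigma)^2 \;\le\; \frac{u(\sigma)^2 \log u(\sigma)^2}{N + 1}.
\]
From Lemma~\ref{lem:lambda}, $u(\sigma)^2 \le 1 + \sigma^2 + \sqrt{2}\sigma \le (N+1)^2$ for $\sigma \in [0, \sqrt{N}]$ and $N \ge 1$, whence $\log u(\sigma)^2 \le 2\log(N+1)$. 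This gives $\ell_N^2/u^2 \ge 1 - 2\log(N+1)/(N+1)$, and applying $\sqrt{1 - x} \ge 1 - x$ on $[0, 1]$ together with the elementary fact $2/(N+1) \le 5/(2N)$ will conclude the proof.

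The main obstacle is the algebraic step that leads from the log-difference inequality to the clean estimate $\Delta \le u^2 \log u^2/(N+1)$; the slight slack in the constant $5/2$ is there to absorb the gap between $1/(N+1)$ and $1/N$ uniformly for $N \ge 1$, and to compensate for the use of the weak bound $\sqrt{1-x}\ge 1-x$. Everything else amounts to standard bookkeeping.
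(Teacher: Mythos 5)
Your proposal is correct. The easy chain $\ell_N\le u_N^{\mathbb S}\le u_N^{\mathbb L}\le u$ is handled essentially as in the paper (which dispatches it in one sentence; your three justifications are the right ones). For the substantive left inequality, you and the paper share the same launching point --- rewriting \eqref{def:nu} as $\sigma^2=N\nu-(1-\nu)H_N(1/\nu)$ and invoking the integral bound $H_N(1/\nu)\ge\log(1+N\nu)=\log\ell_N^2$ --- but diverge afterwards. The paper converts its two-sided harmonic-number estimate into a sandwich of the form $u\bigl(\sigma\sqrt{1-\cdots}\bigr)\le\ell_N\le u\bigl(\sigma\sqrt{1-\cdots}\bigr)$ and then linearizes using the convexity of $u$ together with the explicit derivative $u'(\sigma)=u\sigma/(u^2-1)$, arriving at $\ell_N\ge u\,(1-\log(u^2)/N)$. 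You instead subtract the implicit identities $\ell_N^2=1+\sigma^2+(1-\nu)H_N(1/\nu)$ and $u^2=1+\sigma^2+\log u^2$ directly, apply $\log(1-x)\ge -x/(1-x)$, and exploit the cancellation $\ell_N^2-1+\nu=(N+1)\nu$ to solve for $\Delta=u^2-\ell_N^2$ in one line, obtaining $u^2-\ell_N^2\le u^2\log(u^2)/(N+1)$ --- marginally tighter and arguably cleaner, since it needs neither the convexity of $u$ nor its derivative formula. Both routes then close with $\log u^2=\mathcal O(\log N)$ (you via $u^2\le(N+1)^2$ on $\sigma\in[0,\sqrt N]$, the paper via $\log u^2\le(1+(\log 2)^{-1})\log(N+1)$), and both constants fit under $5/2$. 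In writing this up, make explicit that $\Delta/u^2<1$ (so the logarithm inequality applies; this follows from $\ell_N\ge 1$), that $2\log(N+1)/(N+1)\le 1$ for all $N\ge 1$ (so $\sqrt{1-x}\ge 1-x$ applies), and that the case $\nu=0$ is degenerate but trivial since then $\ell_N=u=1$.
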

\begin{appendixproof}[Proof of~\cref{thm:squeeze}]
The inequalities $\ell_N(\sigma)\le u_N^{\mathbb S}(\sigma)\le u_N^{\mathbb L}(\sigma)\le u(\sigma)$ follow from the fact that $\ell_N$ lower bounds the performance of a class of methods that include the minimizer of $u^{\mathbb S}_N$, while $u^{\mathbb L}_N$ and $u$ are obtained by adding additional constraints to the minimization problem. It therefore suffices to prove that
    \[
    \frac{\ell_N(\sigma)}{u(\sigma)}\ge\left(1-\frac{\log N}{N}-\frac{1+2\sqrt{N}}{N^2}\right).
    \]
    We can rewrite the $\nu$-constraint to
\begin{eqnarray}
\sigma^2
&=\nu \sum_{i=0}^{N-1}  \frac{(N-i)\nu}{1-\nu+(N-i)\nu}\nonumber
&=N\nu-\nu\sum_{i=0}^{N-1}  \frac{1-\nu}{1-\nu+(N-i)\nu}\nonumber\\
&=N\nu-(1-\nu)\sum_{k=1}^{N}  \frac{1}{\frac1\nu-1+k}
&=N\nu-(1-\nu)H_{N}(1/\nu)\label{eq:nu-implicit},
\end{eqnarray}
where $H_m(a)$ the generalized harmonic number. 
We study the asymptotics of $H_m(a)$. Integral bounds result in
\[
\log\left(1+\frac{m}{a}\right)\le H_m(a)\le \log\left(1+\frac{m}{a-1}\right).
\]
This leads to
\[
(1-\nu)\log(1+N\nu)\le N\nu-\sigma^2\le(1-\nu)\log\left(1+\frac{N\nu}{1-\nu}\right),
\]
For the right-hand-side, we write $\frac1{1-\nu}=1+\frac{\nu}{1-\nu}$ and use concavity to bound
\[
\log\left(1+\frac{N\nu}{1-\nu}\right)\le \log\left(1+N\nu\right)+\frac{\frac{N\nu^2}{1-\nu}}{1+N\nu},
\]
so that
\[
N\nu-\log(1+N\nu)-\sigma^2\in\left[-\nu\log(1+N\nu),\frac{N\nu^2}{1+N\nu}-\nu\log(1+N\nu)\right].
\]
Substituting $\ell_N(\sigma)=\sqrt{1+N\nu}$, or $\nu=\frac1N(\ell_N^2-1)$ yields
\[
\ell_N^2-1-2\log\ell_N-\sigma^2\in\left[-2\frac{\ell_N^2-1}N\log\ell_N,\frac{\ell_N^2-1}{N}\left(\frac{\ell_N^2-1}{\ell_N^2}-2\log\ell_N\right)\right].
\]
In this, we recognize the definition of $u(\sigma)$ given in~\eqref{eq:lambda-implicit}.
    This leads to the bounds
    \begin{equation}\label{eq:l-bounds}
    u\left(\sigma\sqrt{1-2\frac{\ell_N^2-1}{N\sigma^2}\log\ell_N}\right)\le\ell_N\le u\left(\sigma\sqrt{1-\frac{\ell_N^2-1 }{N\sigma^2}\left(2\log\ell_N-\frac{\ell_N^2-1}{\ell_N^2}\right)}\right).
    \end{equation}
    Applying $\log(1+x)\le\frac{x}{x+1}$ to $2\log\ell_N=\log(1+\ell_N^2-1)$ already tells us that the right-hand-side is at most $u(\sigma)$.
    Applying $\ell_N(\sigma)\le u(\sigma)$ to the left-hand-side leads to
    \[
    \ell_N(\sigma)\ge u\left(\sigma\sqrt{1-2\frac{u(\sigma)^2-1}{N\sigma^2}\log u(\sigma)}\right).
    \]
    Using the bound $\sqrt{1-x}\ge1-x$, we obtain
    \[
    \ell_N(\sigma)\ge u\left(\sigma-2\frac{u(\sigma)^2-1}{N\sigma}\log u(\sigma)\right).
    \]
    Next, we use convexity of $u(\sigma)$ to write 
    \[
    u\left(\sigma-2\frac{u(\sigma)^2-1}{N\sigma}\log u(\sigma)\right)\ge u(\sigma)-2\frac{u(\sigma)^2-1}{N\sigma}\log u(\sigma)\cdot u'(\sigma).
    \]
    Next, we substitute
    \[
    u'(\sigma)=\frac{u(\sigma)\cdot\sigma}{u(\sigma)^2-1}
    \]
    to obtain
    \[
    \ell_N(\sigma)\ge u(\sigma)-\frac{2u(\sigma)\log u(\sigma)}{N}=u(\sigma)\cdot\left(1-\frac{\log( u(\sigma)^2)}{N}\right).
    \]
    Finally, the bound $u(\sigma)^2\le1+2\sigma +\sigma^2$ from~\cref{lem:lambda} and $\sigma\le\sqrt{N}$ yield $\log u(\sigma)^2\le \log(1+2\sqrt{N} +N)\le\log (N+1)+\frac{2\sqrt{N}}{N+1}\le (1+(\log2)^{-1})\log(N+1)$ for $N\ge1$, where the last step follows from the fact that $2\sqrt{N}/((N+1)\log(N+1))$ is decreasing and equal to $(\log2)^{-1}$ for $N=1$. The desired bound follows from the fact that $1+(\log2)^{-1}<\frac52$.
\end{appendixproof}


\cref{thm:squeeze} tells us that the performance of a subgradient method with associated analytic conic combination $\alpha'$ is asymptotically equivalent to a subgradient method with the associated conic combination $\alpha^{\mathbb L}$ proposed in Proposition \ref{prop:u-L-bound}.
It furthermore analytically shows that the relative worst-case suboptimality gap depicted in Figure \ref{fig:rel_gap} is small as indeed
\(
  (\ref{eq:relative-gap}) \leq \max_{\sigma \in [0, \sqrt{N}]} \tfrac{(u(\sigma)-\ell_N(\sigma))}{\ell_N(\sigma)}\leq 5/2\log(N+1)/N/(1-5/2\log(N+1)/N)=\mc O(\log(N)/N).
\)
The next result shows that the step sizes themselves are also asymptotically equivalent for moderately small $\sigma$.

\begin{lemma}\label{lem:asymptotics-moderate-sigma}
    For $\sigma\ll N^{1/4},$ the optimal conic combination $\alpha^{\mathbb L}$ in \cref{prop:u-L-bound} are asymptotically equivalent to the analytic conic combination $\alpha'$ from~\cref{lem:admissible}. That is,
    \begin{equation}\label{eq:alpha-opt-asymp-i}
    \alpha_k^{\mathbb L}\sim \frac{R(N-k)}{L(N+1)^{3/2}}\cdot\frac{u(\sigma)}{u(\sigma)^2-(u(\sigma)^2-1)\frac{k}{N+1}},
    \end{equation}
    where $u(\sigma)\ge1$ is defined implicitly in Equation~\eqref{eq:lambda-implicit}.
\end{lemma}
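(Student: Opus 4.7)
The plan is to reduce the asymptotic equivalence of the step-size schedules to one between the optimal sequence $y_k^\star$ from \cref{thm:optimal-bound-z} and the explicit harmonic-type sequence $y_k'=y_0'/(1-ky_0')$ with $y_0'=(1-u(\sigma)^{-2})/(N+1)$ used in the proof of \cref{lem:admissible}. Since \cref{thm:squeeze} already delivers $u_N^{\mathbb L}(\sigma)/u(\sigma)\to 1$, the task reduces to establishing the ratio asymptotic $y_k^\star/y_0^\star \sim y_k'/y_0' = u(\sigma)^2(N+1)/[u(\sigma)^2(N+1-k)+k]$ uniformly over $k\in [0,\dots,N-1]$.

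First I establish $y_0^\star\sim y_0'$. From \cref{lem:x-seq-sum} one has the squeeze $\sum_r \tfrac{(N+1)_r}{r}y_0^r \le S_N(y_0)\le -\log(1-(N+1)y_0)$, and the termwise limit $(N+1)_r/(N+1)^r\to 1$ makes both bounds converge to the same value whenever $(N+1)y_0$ stays in a compact subinterval of $[0,1)$. This yields the asymptotics $S_N(y_0)\sim -\log(1-(N+1)y_0)$ and $S_N'(y_0)\sim (N+1)/(1-(N+1)y_0)$ on that regime. Plugging these into the first-order condition \eqref{eq:y0-implicit} and writing $c^\star=(N+1)y_0^\star$, the optimality condition becomes $\sigma^2\sim c^\star/(1-c^\star)+\log(1-c^\star)$. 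The map $c\mapsto c/(1-c)+\log(1-c)$ is strictly increasing on $[0,1)$ (its derivative equals $c/(1-c)^2$), and the identity \eqref{eq:lambda-implicit} defining $u$ shows that $c=1-u(\sigma)^{-2}$ is its unique solution. Monotonicity then forces $c^\star\to 1-u(\sigma)^{-2}$, i.e., $y_0^\star\sim y_0'$.

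To propagate this to $k\ge 1$, I pass to $z_k=1/y_k$: the recursion $y_{k+1}=y_k(1+y_k)$ becomes $z_{k+1}=z_k-1+1/(z_k+1)$, so $z_k^\star=z_0^\star-k+E_k^\star$ with $E_k^\star=\sum_{j<k}1/(z_j^\star+1)\le \log(z_0^\star/z_k^\star)$. In the regime $\sigma\ll N^{1/4}$, \cref{lem:lambda} yields $u(\sigma)^2-1\le \sqrt{2}\sigma+\sigma^2=o(\sqrt{N})$, so $z_0^\star\sim (N+1)u(\sigma)^2/(u(\sigma)^2-1)\gg \sqrt{N}$ and $z_N^\star\ge z_0^\star-N\sim (N+u(\sigma)^2)/(u(\sigma)^2-1)\gg \sqrt{N}$. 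The correction satisfies $E_k^\star\le 2\log u(\sigma)=o(\log N)$ uniformly in $k\le N$, which is negligible against $z_k^\star\gg \sqrt{N}$. Hence $z_k^\star\sim z_0^\star-k\sim z_0'-k=z_k'$, and therefore $y_k^\star/y_0^\star=z_0^\star/z_k^\star\sim z_0'/z_k'=y_k'/y_0'$, which together with $u_N^{\mathbb L}(\sigma)\sim u(\sigma)$ yields the claimed asymptotic for $\alpha_k^{\mathbb L}$.

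The main obstacle is uniformity: the squeeze from \cref{lem:x-seq-sum} holds only pointwise in $y_0$, yet it must be invoked at the implicit, $N$- and $\sigma$-dependent optimum $y_0^\star$. Strict monotonicity of the limiting optimality map promotes the pointwise asymptotics to genuine convergence of $c^\star$; similarly, controlling $E_k^\star$ uniformly in $k\le N$ is exactly where the hypothesis $\sigma\ll N^{1/4}$ enters. Without it, $u(\sigma)^2-1$ could be of order $N$, making $z_N^\star$ of constant size and $E_k^\star$ comparable to the main term, which would spoil the asymptotic equivalence.
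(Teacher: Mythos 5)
Your overall strategy mirrors the paper's: pin down the asymptotics of $y_0^\star$ from the first-order condition \eqref{eq:y0-implicit}, propagate along the recursion $y_{k+1}=y_k(1+y_k)$, and combine with $u_N^{\mathbb L}(\sigma)\sim u(\sigma)$. Your propagation device is genuinely different and rather clean: passing to $z_k=1/y_k$ turns the recursion into $z_{k+1}=z_k-1+1/(z_k+1)$, and the telescoping bound $E_k=\sum_{j<k}1/(z_j+1)\le\log(z_0/z_k)$ replaces the paper's explicit coefficient expansion of $y_k$ as a polynomial in $y_0$ (via \cref{lem:x-seq-sum} and the falling-factorial estimates). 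That part of your argument is correct and arguably more transparent than the paper's.

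There is, however, a genuine gap in your determination of $y_0^\star$, and it sits exactly in the regime where the hypothesis $\sigma\ll N^{1/4}$ has content, namely $\sigma\to\infty$. Your asymptotic $S_N(y_0)\sim-\log(1-(N+1)y_0)$ is justified only when $c=(N+1)y_0$ stays in a compact subinterval of $[0,1)$, but at the optimum $c^\star\to 1-u(\sigma)^{-2}\to 1$ when $\sigma\to\infty$, so the hypothesis of your own squeeze fails there. More importantly, the conclusion you extract from monotonicity, $c^\star\sim 1-u(\sigma)^{-2}$, is too weak: since $z_0'-N\asymp N/u(\sigma)^2$ while $z_0'\asymp N$, the steps ``$z_0^\star-N\sim(N+u^2)/(u^2-1)$'' and ``$z_k^\star\sim z_0'-k$ uniformly in $k$'' require the \emph{relative} asymptotic $1-c^\star\sim u(\sigma)^{-2}$, i.e., a bound $|c^\star-(1-u^{-2})|=o(u^{-2})$, which does not follow from $c^\star/(1-u^{-2})\to1$ when both quantities tend to $1$. (One direction is free: the exact upper bound $y_0S_N'(y_0)-S_N(y_0)\le c/(1-c)+\log(1-c)$ gives $1-c^\star\le u^{-2}$.) The missing ingredient is a quantitative lower bound on the error in the first-order condition — the paper's estimate $\sigma^2=z_0-\log(1+z_0)+\mc O\bigl(z_0^2(1+z_0)/(N+1)\bigr)$ with $z_0=c/(1-c)$, whose error term is $o(\sigma^2)$ precisely when $\sigma\ll N^{1/4}$ — and this is obtained from the falling-factorial lower bound $(n)_r\ge n^r(1-r^2/n)$ that your qualitative dominated-convergence argument bypasses. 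For bounded $\sigma$ your proof is complete; to cover the full range of the lemma you need to reinstate this quantitative control on $S_N$ and $S_N'$ near $(N+1)y_0=1$.
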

\begin{appendixproof}[Proof of~\cref{lem:asymptotics-moderate-sigma}]
    We will use the bounds of~\cref{lem:x-seq-sum} to write
    \[
    \sum_{i=1}^{2^N}(k)_{i-1}y_0^i\le y_k\le \sum_{i=1}^{2^N}k^{i-1}y_0^i.
    \]
    For $y_0<k^{-1}$, we can further bound the right-hand-side to an infinite sum, which results in the known power series
    \[
    y_k\le \sum_{i=1}^{\infty}k^{i-1}y_0^i=\frac{y_0}{1-ky_0}.
    \]
    For the lower bound, we will first lower-bound the falling factorial using an integral bound and an expansion of the logarithm:
    \begin{eqnarray*}
    \log(n)_r&=\sum_{x=n-r+1}^n\log x
    &\ge\int_{n-r}^n\log x dx\\
&=n\log n-(n-r)\log(n-r)-r
&=r\log n-(n-r)\log\left(1-\frac rn\right)-r\\
&\ge r\log n-\frac{r^2}{n},
    \end{eqnarray*}
    so that
    \[
    (n)_r\ge n^r\cdot e^{-r^2/n}\ge n^r\cdot\left(1-\frac{r^2}n\right).
    \]
    Notice that $(k)_{i-1}=0$ for $i>k+1$. Hence,
    \[
    y_k\ge \sum_{i=1}^{2^N}(k)_{i-1}y_0^i=\sum_{i=1}^\infty(k)_{i-1}y_0^i\ge \sum_{i=1}^\infty k^{i-1}\cdot\left(1-\frac{(i-1)^2}{k}\right)y_0^i=\frac{y_0}{1-ky_0}-\frac{y_0}{k}\sum_{i=0}^\infty i^2y_0^i,
    \]
    for $y_0<k^{-1}$. Using $\sum_{i=0}^\infty i^2w^i,=\frac{w(1+w)}{(1-w)^3}$ for $w=ky_0<1$, we obtain the lower bound
    \[
    y_k\ge \frac{y_0}{1-ky_0}\left(1-\frac{y_0(1+ky_0)}{(1-ky_0)^2}\right),
    \]
    which is asymptotically equivalent to the upper bound whenever 
    \begin{equation}\label{eq:y_k-condition}
        \frac{y_0}{(1-ky_0)^2}\to0.
    \end{equation}
    To see for what $\sigma$ this holds, we inspect the asymptotics of~\eqref{eq:y0-implicit}. 
    We rewrite
    \[
    \sigma^2=y_0S_N'(y_0)-S_N(y_0)=\sum_{i=1}^{2^N}(i-1)\cdot s_{N,i}\cdot y_0^i,
    \]
    where $s_{N,i}=\sum_{k=0}^Na_{k,i}$ is the $i$-th coefficient of $S_N(y_0).$ \cref{lem:x-seq-sum} proves the bounds
    \[
    \frac1i(N+1)_{i}\le s_{N,i}\le \frac1i(N+1)^{^i}.
    \]
    This leads to the bounds
    \[
    \sigma^2\le \sum_{i=1}^\infty\left(1-\frac1i\right)\cdot (N+1)^i\cdot y_0^i=\frac{(N+1)y_0}{1-(N+1)y_0}+\log(1-(N+1)y_0),
    \]
    and
    \begin{align*}
    \sigma^2
    &\ge \sum_{i=1}^\infty\left(1-\frac1i\right)\left(1-\frac{i^2}{N+1}\right)\cdot (N+1)^i\cdot y_0^i\\
    &=\frac{(N+1)y_0}{1-(N+1)y_0}+\log(1-(N+1)y_0)-(N+1)y_0^2\sum_{i=1}^\infty i(i-1)(N+1)^{i-2} y_0^{i-2},
    \end{align*}
    where we were again able to extend the sum to infinity because $(N+1)_{i}=0$ for $i>N+1$. Using 
    \[
    \sum_{i=2}^\infty i(i-1)w^{i-2}=\frac{d^2}{dw^2}\sum_{i=0}^\infty w^i=\frac{d^2}{dw^2}\frac1{1-w}=\frac{2}{(1-w)^3},
    \]
    we obtain
    \[
    \sigma^2=\frac{(N+1)y_0}{1-(N+1)y_0}+\log(1-(N+1)y_0)+\bigO{\frac{(N+1)y_0^2}{(1-(N+1)y_0)^3}}.
    \]
    Introduce $z_0=\frac{(N+1)y_0}{1-(N+1)y_0}$, then
    \[
    \sigma^2=z_0-\log(1+z_0)+\bigO{\frac{z_0^2(1+z_0)}{N+1}}.
    \]
    We look for regimes where the error term is negligible, so that $z_0\sim z(\sigma)$, where $z(\sigma)$ is the positive solution in
    \begin{equation}\label{eq:z-implicit}
        \sigma^2=z-\log(1+z).
    \end{equation}
    Notice that $z_0-\log(1+z_0)\sim z_0$ for large $z_0$ and $z_0-\log(1+z_0)\sim \frac12z_0^2$ for small $z_0$. Hence, for $\sigma\to\infty$, we need $z_0\ll \sqrt{N}$ to get $z_0\sim \sigma^2$. This occurs whenever $\sigma\ll N^{1/4}$.
    For $z_0\to0$, we need $\frac{z_0^2(1+z_0)}{N+1}\ll z_0^2$ which always holds since $N\to\infty.$
    This tells us that whenever $\sigma\ll N^{1/4}$, we have $z_0\sim z(\sigma)$, and 
    \[y_0=\frac1{N+1}\frac{z_0}{z_0+1}\sim\frac1{N+1}\frac{z(\sigma)}{z(\sigma)+1}.\]

    Returning to the condition~\eqref{eq:y_k-condition}, we note that
    \[
    \frac{y_0}{(1-ky_0)^2}<\frac{y_0}{(1-(N+1)y_0)^2}=\frac{z_0(1+z_0)}{N+1},
    \]
    which indeed vanishes since $z_n\ll\sqrt{N}$. This tells us that $y_k\sim\frac{y_0}{1-ky_0}$ for $\sigma\ll N^{1/4}$.
    Finally, from~\cref{thm:squeeze}, it follows that $u_N^{\mathbb L}(\sigma)\sim u(\sigma)$, so that
    \[
    \alpha_k^{\mathbb L}\sim \alpha_k^*\cdot \frac{1}{u(\sigma)\cdot(1-ky_0)},
    \]
    where $\alpha_k^*$ are the optimal step sizes for the noiseless case given in~\eqref{eq:alpha-noiseless}. The result follows after substituting
    \[
    y_0\sim\frac1{N+1}\frac{u(\sigma)^2-1}{u(\sigma)^2}.
    \]
  \end{appendixproof}

  The expression~\eqref{eq:alpha-opt-asymp-i} helps explain the shape of the step size sequences depicted in Figure~\ref{fig:steps}.
  Let us reindex the iterations using $\theta\in[0,1)$ via $k=\lfloor N\theta\rfloor$ and rescale it appropriately
  $\alpha^{\mathbb L}(\theta)= \alpha^{\mathbb L}_{\lfloor N\theta\rfloor}$ for $\theta\in[0,1)$.
  Recall that in the special case $\sigma=0$, we have that $\alpha^{\mathbb L}_k(\sigma)$ coincides with the conic combination in Equation (\ref{eq:alpha-noiseless}) and hence
  \(
    \alpha^{\mathbb L}(\theta) \sim \tfrac{R}{L\sqrt{N+1}}  (1-\theta)
  \)
  corresponding to the linear line in Figure \ref{fig:steps}.
  More generally, from Lemma \ref{lem:asymptotics-moderate-sigma} it follows that
  \[
    \alpha^{\mathbb L}(\theta)\sim \frac{R}{L\sqrt{N+1}} \cdot \frac{1-\theta}{u(\sigma)(1-(1-u(\sigma)^{-2})\cdot\theta)}
  \]
  which is depicted as the dotted line in Figure \ref{fig:steps} for $\sigma=5$.
  From the previous we also deduce that the (near) optimal subgradient methods for $\sigma>0$ become more aggressive than the noiseless subgradient method associated with the conic combination in Equation (\ref{eq:alpha-noiseless}) in the regime
  \(
    \theta>\tfrac{u(\sigma)}{(1+u(\sigma))}.
  \)


The bound given in~\cref{thm:main-upper-bound} improves the trivial bound $RL$ whenever $u_N^{\mathbb L}(\sigma)<\sqrt{N+1}$. Using the upper bound $u_N^{\mathbb L}(\sigma)\le u(\sigma),$ we can guarantee that for
\[
\sigma^2<N-\log(N+1),
\]
it holds that $u_N^{\mathbb L}(\sigma)\le u(\sigma)<\sqrt{N+1}$. 

\section{Discussion}

In this paper we advance three subgradient methods (see Corollary \ref{cor:u-S-bound}, Proposition \ref{prop:u-L-bound}, and Lemma \ref{lem:admissible}, respectively) each of which, as implied by Theorem \ref{thm:squeeze}, enjoy near-optimal performance in terms of their relative worst-case suboptimality gap when minimizing nonsmooth convex functions $f$ faced with adversarial subgradient corruption.
Each of these subgradient methods can hence be regarded as an inexact generalization of the classical subgradient method (\ref{eq:classical-stepsize}).
In this section, we discuss several possible extensions of this work.

\paragraph{Projected Subgradient Methods.}
In the presence of a convex constraint $x\in C$, projected subgradient methods 
\begin{equation}
  \label{eq:proj_gd}
  x_{k+1} = P_C(x_k-h_k \tilde g_k)
\end{equation}
are considered instead where $P_C(y) = \arg\min_{x\in C} \norm{y-x}^2$ denotes the projection operator. 
A well known property of the projection operator guarantees that iteration \eqref{eq:proj_gd} can be represented equivalently as
\begin{equation}
  \label{eq:proj_prop}
  \norm{x_{k+1}-y}^2 \leq \norm{x_k-h_k \tilde g_k - y}^2 \quad \forall y\in C.
\end{equation}
In the classical proof \citep{boyd2003subgradient, lan2020first} of Equation (\ref{eq:classical-upper-bound}), it is established that the claimed performance guarantee holds  for any iteration scheme which satisfies merely
\begin{equation}
  \label{eq:proj_prop_relax}
  \norm{x_{k+1}-x^\star}^2 \leq \norm{x_k-h_k \tilde g_k - x^\star}^2 \quad \forall k\in [0, \dots, N-1].
\end{equation}
As performance lower bound are not affected by auxiliary constraints (the restriction $C$ may indeed be chosen as $X$), it follows that through projection the subgradient methods remains worst-case optimal even when facing convex restrictions on which projection is simple.
Extending the results in this paper to work with convex restrictions may at first glance seem daunting as our upper performance bound result in Lemma \ref{lem:bound} uses in Equation (\ref{eq:linearity-iterations}) the affine relation $x_{k+1} = x_0+ \sum_{j=0}^kh_j\tilde g_j$ between iterates and subgradients which clearly fails to hold if $C\neq X$.
However, inspired by Equation \eqref{eq:proj_prop} and akin to \eqref{eq:proj_prop_relax}, it is rather straightforward (though very tedious) to show that the result in Proposition \ref{thm:main-upper-bound} remains valid for any subgradient iterations which satisfies
\begin{equation}
  \label{eq:proj_prop_relax_2}
  \norm{x_{k+1}-y}^2 \leq \norm{x_k-h_k \tilde g_k - y}^2 \quad \forall y \in \{x^\star, x_0, \dots, x_{N}\}, ~\forall k\in [0, \dots, N-1].
\end{equation}
Hence, although we chose to omit the details of this generalization as not to negatively affect the exposition of this paper, all results in this paper still hold when $f$ is restricted to $C$ and where the projection iteration suggested in Equation \eqref{eq:proj_gd} is used instead.

\paragraph{Universal Subgradient Methods.}

The near-optimal subgradient methods identified here depend on the problem parameters $L$ and $\sigma=\gamma/L$.
In practice, finding good values for these parameters may prove challenging.
In the noiseless case, this can be addressed by \emph{normalized subgradient descent}, where $\|g_k\|$ is substituted for $L$ in the step sizes~\eqref{eq:classical-stepsize} or~\eqref{eq:zamani-stepsize}, which enjoys the same optimal~\citep{boyd2003subgradient,zamani2023exact} performance guarantee \eqref{eq:classical-upper-bound} while being adaptive to $L$. 
Generalizing this observation to inexact subgradients presents a promising direction of research but has to face the problem that the normalization $\norm{g_k}$ is not observed directly, but the corrupted version $\norm{\tilde g_k}$ will have to be used instead. We note, however, that the lower bound~\cref{cor:lower} does hold for normalized subgradient descent.

Finally, the near-optimal subgradient methods dependend on the power of the adversary as characterized by $\sigma$. Unlike the Lipschitz constant $L$, there does not appear to be a candidate estimator for this parameter. This is a common challenge that is faced in adversarial environments.  Typically, the value of $\sigma$ necessarily reflects a certain amount of domain expertise which is to be taken at face value.

\paragraph{Smooth Convex Optimization.}
In this work, we focused on \emph{nonsmooth} optimization. However, the PEP approach that is at the core of \cref{lem:bound} and \cref{thm:lower-matrix} has been extended to smooth (strongly) convex functions by \cite{taylor_smooth_2017, deklerk2017worst}.
Gradient methods with optimal worst-case suboptimality $\mc O(N^{-2})$ make use of \emph{momentum}~\citep{kim_optimized_2016, nesterov1983method} whereas a simple gradient method with constant step size suffer a worst-case suboptimality of $\mc O (N^{-1})$.
By allowing nonconstant step sizes, as we do here, recent concurrent work by \citet{grimmer2024provably} and \citet{altschuler2025acceleration} prove that worst-case suboptimality can be improved to $\mc O(N^{-1-\delta})$ with $\delta\approx 0.27$ which is conjectured to be unimprovable without momentum.
We expect this richer landscape of smooth convex optimization to translate to a significantly more challenging analysis when studying the impact of adversarial noise.
It has been observed that momentum methods are more sensitive to error accumulation~\citep{devolder2014first,liu2024nonasymptotic} than simple gradient methods. This suggests that optimal methods for nonsmooth optimization with corrupted gradients may rely less on momentum as the level of noise increases.


\bibliography{main.bib}

\end{document}